\theoremstyle{plain}
\newtheorem{thm}{Theorem}
\newtheorem{prop}{Proposition}
\newtheorem{lemma}{Lemma}
\newtheorem{cor}{Corollary}
\newtheorem{example}{Example}
\theoremstyle{definition}
\newtheorem{defi}{Definition}
\newtheorem*{remark}{Remark}
\newcommand{\beq}{\begin{equation}}
\newcommand{\eeq}{\end{equation}}
\newcommand{\ba}{\begin{align}}
\newcommand{\ea}{\end{align}}
\newcommand{\nn}{\nonumber}
\newcommand{\llll}{\langle\hskip -0.04truecm\langle}
\newcommand{\rrrr}{\rangle\hskip -0.04truecm\rangle}
\newcommand{\bll}{\bigl\langle\hskip -0.05truecm \bigl\langle}
\newcommand{\brr}{\bigr\rangle\hskip -0.05truecm \bigr\rangle}
\newcommand{\bllm}{\bigl[\hskip -0.04truecm \bigl[}
\newcommand{\brrm}{\bigr]\hskip -0.04truecm \bigr]}
\newcommand{\pal}{\partial}
\newcommand{\F}{\mathcal{F}}
\newcommand{\al}{\alpha}
\newcommand{\bv}{{\bf v}}
\newcommand{\bw}{{\bf w}}
\newcommand{\QQ}{{\mathbb Q}}
\newcommand{\CC}{{\mathbb C}}
\newcommand{\ZZ}{{\mathbb Z}}
\newcommand{\NN}{{\mathbb N}}
\newcommand{\R}{{\mathcal R}}
\newcommand{\A}{{\mathcal A}}
\newcommand{\C}{{\mathcal C}}
\newcommand{\pf}{\noindent{\it Proof \ }}
\newcommand{\tr}{{\rm tr}}
\newcommand{\bt}{{\bf t}}
\newcommand{\bT}{{\bf T}}
\newcommand{\e}{\epsilon}
\newcommand{\p}{\partial}
\newcommand{\epf}{$\quad$\hfill
\raisebox{0.11truecm}{\fbox{}}\par\vskip0.4truecm}
\def\={\; = \;}
\def\+{\; + \;}
\def\:={\; := \; }
\newenvironment{subtheorem}[1]{
  \def\subtheoremcounter{#1}
  \refstepcounter{#1}
  \protected@edef\theparentnumber{\csname the#1\endcsname}
  \setcounter{parentnumber}{\value{#1}}
  \setcounter{#1}{0}
  \expandafter\def\csname the#1\endcsname{\theparentnumber.\Alph{#1}}
  \ignorespaces
}{
  \setcounter{\subtheoremcounter}{\value{parentnumber}}
  \ignorespacesafterend
}
\newcounter{parentnumber}
\begin{document}

\title{Gromov--Witten invariants of the Riemann sphere}
\author{Boris Dubrovin, Di Yang, Don Zagier}
\date{}
\dedicatory{
} 
\maketitle
\begin{abstract}
A conjectural formula for the $k$-point generating function of 
Gromov--Witten invariants of the Riemann sphere for all genera 
and all degrees was proposed in \cite{DY2}. In this paper, we give 
a proof of this formula together with an explicit analytic (as opposed to formal) expression for
the corresponding matrix resolvent. We also give a  formula for the $k$-point function
as a sum of $(k-1)!$ products of hypergeometric functions of one variable.
We show that the $k$-point generating function coincides with the $\e\rightarrow 0$ 
asymptotics of the analytic $k$-point function, and also compute three more asymptotics of the 
analytic function for $\e\rightarrow \infty$, $q\rightarrow 0$, $q\rightarrow\infty$, thus  
defining new invariants for the Riemann sphere.
\end{abstract} 

\medskip

\setcounter{tocdepth}{1}
\tableofcontents

\section{Statements of the main results}

\subsection{Gromov--Witten invariants of $\mathbb{P}^1$}
Let $\overline{\mathcal{M}}_{g,k}(\mathbb{P}^1,\beta)$ be the moduli space of stable maps 
from algebraic curves of genus $g$ with $k$ distinct marked points
 to $\mathbb{P}^1$, 
 of degree $\beta\in H_2(\mathbb{P}^1;\mathbb{Z})$
$$
\overline{\mathcal{M}}_{g,k}(\mathbb{P}^1,\beta) \= 
\bigl\{\, f: \, \left(\Sigma_{g}, p_1,\dots,p_k\right)\rightarrow \mathbb{P}^1 \, \big| \,~ f_*\left([\Sigma_g] \, \right) = \beta \, \bigr\} \, / \, \sim \, .
$$ 
Here, $(\Sigma_{g},p_1,\dots,p_k)$ denotes an algebraic curve of genus $g$ with at most double-point singularities and with 
 the distinct marked points $p_1,\dots,p_k$, and the equivalence relation $\sim$ is defined by isomorphisms of~$\Sigma_g\rightarrow \mathbb{P}^1$ 
identical on~$\mathbb{P}^1$ and on the markings.  
Let $\mathcal{L}_i$ be the $i^{\rm th}$ tautological line bundle on $\overline{\mathcal{M}}_{g,k}(\mathbb{P}^1,\beta)$, and 
 $\psi_i:= c_1(\mathcal{L}_i),$ $i=1,\dots,k$.  Denote by ${\rm ev}_i :   \overline{\mathcal{M}}_{g,k}(\mathbb{P}^1,\beta)\rightarrow \mathbb{P}^1$ the $i^{\rm th}$ evaluation map.

The genus $g$, degree $\beta$ Gromov--Witten (GW) invariants
of $\mathbb{P}^1$ are integrals of the form
\beq\label{corr-gd}
\int_{\left[\overline{\mathcal{M}}_{g,k}(\mathbb{P}^1,\beta)\right]^{{\rm virt}}} 
{\rm ev}_1^*(\phi_{\alpha_1})  \cdots  {\rm ev}_k^*(\phi_{\alpha_k}) \, \psi_1^{i_1} \cdots \psi_k^{i_k} \; =: \;  
\langle \tau_{i_1}(\phi_{\alpha_1}) \cdots \tau_{i_k}(\phi_{\alpha_k})  \rangle_{g,d}  \,.
\eeq
Here, $\alpha_1,\dots,\alpha_k\in\{1,2\},\, i_1,\dots,i_k\geq 0$,  $\phi_1=1$, $\phi_2=\omega\in H^2(\mathbb{P}^1; \mathbb{C})$ normalized by
$\int_{\mathbb{P}^1} \omega \= 1$,
and $\bigl[\overline{\mathcal{M}}_{g,k}(\mathbb{P}^1,\beta)\bigr]^{{\rm virt}}$ denotes the virtual fundamental class \cite{KM,B,BF,LT}.  
In the right-hand-side of equation \eqref{corr-gd}, the ``degree" $\beta\in H_2\bigl( \mathbb P^1; \mathbb Z\bigr)$ has been replaced by
an integer~$d$ through $d:=\int_\beta \omega$.
The GW invariant $\langle \tau_{i_1}(\phi_{\alpha_1}) \cdots \tau_{i_k}(\phi_{\alpha_k})  \rangle_{g,d}$ vanishes unless 
the degree--dimension matching holds:
$2g - 2 + 2d + 2 k = \sum_{\ell=1}^k i_\ell + \sum_{\ell=1}^k \alpha_\ell$.

For $k\geq 1$ and  $i_1,\dots,i_k\geq0$, $\alpha_1,\dots,\alpha_k \in\{1,2\}$, denote
\begin{align}
\langle \tau_{i_1}(\phi_{\alpha_1}) \cdots \tau_{i_k}(\phi_{\alpha_k})  \rangle \= \langle \tau_{i_1}(\phi_{\alpha_1}) \cdots \tau_{i_k}(\phi_{\alpha_k})  \rangle (\epsilon,q)
\:= \sum_{g=0}^\infty \sum_{d=0}^\infty  \epsilon^{2g-2} q^d \, \langle \tau_{i_1}(\phi_{\alpha_1}) \cdots \tau_{i_k}(\phi_{\alpha_k})  \rangle_{g,d} \,. \nn
\end{align}
We will call $\langle \tau_{i_1}(\phi_{\alpha_1}) \cdots \tau_{i_k}(\phi_{\alpha_k}) \rangle$ the~{\it $k$-point $\mathbb{P}^1$ correlator}, and 
$\langle \tau_{i_1}(\phi_{\alpha_1}) \cdots \tau_{i_k}(\phi_{\alpha_k})  \rangle_{g,d}$ 
the {\it $k$-point $\mathbb{P}^1$ correlator of genus~$g$ and degree~$d$}. 
Due to the degree--dimension matching, 
$\e^2\, \langle \tau_{k_1}(\phi_{\alpha_1}) \cdots \tau_{i_k}(\phi_{\alpha_k})  \rangle (\epsilon, q)$ is a homogeneous polynomial of $\epsilon^2, \,q.$  More precisely, 
$$
\langle \tau_{i_1}(\phi_{\alpha_1}) \cdots \tau_{i_k}(\phi_{\alpha_k})  \rangle (\epsilon,q)
\= \sum_{g,d\geq 0 \atop 2g+2d -2 = \sum_{\ell=1}^k (i_\ell + \alpha_\ell-2)}  \epsilon^{2g-2} q^d \langle \tau_{i_1}(\phi_{\alpha_1}) \cdots \tau_{i_k}(\phi_{\alpha_k})  \rangle_{g,d} \,.
$$
Note that this expression vanishes if $\sum_{\ell=1}^k (i_\ell + \alpha_\ell)$ is odd.

\begin{defi}  
The {\it free energy} $\F$ is defined as the following generating series of $\mathbb{P}^1$ correlators 
\beq\label{freeenergy}
\F \= \F(\bT;\e,q) \:= \sum_{k\geq 0} \frac1{k!} 
\sum_{1\leq \al_1,\dots,\al_k\leq 2 \atop i_1,\dots,i_k\geq 0}  T_{i_1}^{\alpha_1} \dots T_{i_k}^{\alpha_k} \,
\langle \tau_{i_1} (\alpha_1) \cdots \tau_{i_k} (\alpha_k)  \rangle(\e,q) 
\eeq
where $\bT=(T^\alpha_j)_{\alpha=1,2,\,j\geq 0}$. 
The {\it partial $k$-point correlation functions} are the power series 
$$\bll \tau_{i_1}(\phi_{\alpha_1}) \cdots \tau_{i_k}(\phi_{\alpha_k}) \brr (x; \e, q) \:= 
\frac{\p^k \F (\bT; \e, q)}{\p T^{\alpha_1}_{i_1} \dots \p T^{\alpha_k}_{i_k}}\bigg|_{\,T^\alpha_i \,=\, \delta_{\alpha 1} \, \delta_{i0} \, x}\,.$$
\end{defi}

Clearly,
$\bll \tau_{i_1}(\phi_{\alpha_1}) \cdots \tau_{i_k}(\phi_{\alpha_k}) \brr (0; \e, q) = \langle \tau_{i_1}(\phi_{\alpha_1}) \cdots \tau_{i_k}(\phi_{\alpha_k}) \rangle(\e,q)$. 
In this paper, we consider in particular the partial correlation-functions of the form
$\llll  \tau_{i_1}(\omega) \cdots \tau_{i_k}(\omega) \rrrr(x;\e, q)$, and consider 
 the following generating series \cite{DY2}, called the {\it $k$-point function}: 
\begin{align}
\label{mainsum}
& F_k(\lambda_1,\dots,\lambda_k; x; \e,q) \:= \e^k \! \sum_{i_1,\dots,i_k\geq 0}  
\frac{(i_1+1)! \cdots (i_k+1)!}{\lambda_1^{i_1+2} \dots \lambda_k^{i_k+2}} 
\bll  \tau_{i_1}(\omega) \cdots \tau_{i_k}(\omega)  \brr(x;\e,q)\;\quad (k\geq 1) \,.
\end{align}
Here $\lambda_1$, $\lambda_2$, $\dots$ are indeterminates. 
The dependence on $q$ in $F_k(\lambda_1,\dots,\lambda_k; x ; \e;q)$ can be recovered from $F_k(\lambda_1,\dots,\lambda_k; x; \e;1)$ by rescaling:
\beq \label{scaling2} 
F_k(\lambda_1,\dots,\lambda_k; x; \e, q) \equiv q^{-k/2} F_k \bigl( q^{-1/2} \lambda_1 ,\dots, q^{-1/2} \lambda_k; \, q^{-1} x; \, q^{-1/2} \e; 1 \bigr)  \;\quad (k\geq 1)\,.
\eeq
In particular, $F_k(\lambda_1,\dots,\lambda_k; 0; \e, q) \equiv q^{-k/2} F_k \bigl( q^{-1/2} \lambda_1 ,\dots, q^{-1/2} \lambda_k; 0; \, q^{-1/2} \e; 1 \bigr)$.

\subsection{The $k$-point function in terms of matrix resolvents } 
The matrix resolvent (MR) approach of computing logarithmic derivatives of tau-functions 
of continuous integrable systems
was introduced in \cite{BDY1,BDY2,BDY3}. 
It was further extended in \cite{DY1} to discrete integrable systems. 
The {\it Toda conjecture} (now a theorem) \cite{Du1, EY, Ge,OP,CDZ} says that $e^{\F}$ is the tau-function of a particular solution (which will be called the GW solution) to the 
Toda Lattice Hierarchy. So we can apply the MR approach \cite{DY1} to the computation of the $\mathbb{P}^1$ correlators. 
\begin{defi} [\cite{DY1, DY2}] \label{defiRn}
Let $U_n(\lambda;\epsilon)= \begin{pmatrix} \epsilon\,n + \frac\e2 \,-\,\lambda & 1\\ -1 & 0 \end{pmatrix}$.
Define the {\it matrix resolvent} $R_n(\lambda;\epsilon)$ for the GW solution of the Toda Lattice Hierarchy as the unique formal solution to the following problem
\begin{align}
& R_{n+1}(\lambda;\epsilon) \, U_n(\lambda;\epsilon) \, - \,  U_n(\lambda;\epsilon) \, R_n(\lambda;\epsilon) \= 0\,, \label{r1} \\
& {\rm \tr} \, R_n(\lambda;\epsilon) \= 1,\qquad {\rm \det} \, R_n(\lambda;\epsilon) \= 0\,,  \label{r2} \\
& R_n(\lambda;\epsilon) \= \begin{pmatrix} 1 & 0  \\  0 & 0 \\ \end{pmatrix}
\+ {\rm O}\bigl(\lambda^{-1}\bigr)  \,, \quad \lambda\rightarrow \infty\,. \label{r3}
\end{align}
\end{defi}
\noindent This solution $R_n(\lambda;\e)$ belongs to ${\rm Mat}_2\bigl(\mathbb{Z}[n,\e]\bllm\lambda^{-1}\brrm\bigr)$.
Define $\R(\lambda;x;\e):=R_{x/\epsilon}(\lambda;\epsilon)$. 

\begin{thm} \label{prop1} 
The formal series \eqref{mainsum} with $k\geq 2$ have the expressions
\begin{align}
& F_2(\lambda_1,\lambda_2;x;\e,1) \= \frac{\tr \, \left[  \R(\lambda_1;x;\e) \,  \R(\lambda_2;x;\e)\right]-1}{(\lambda_1-\lambda_2)^2}\, , \label{twopoint} \\
& F_k(\lambda_1, \dots, \lambda_k;x;\e,1) \= -\sum_{\sigma\in S_k/C_k} \frac{\tr \,\left[{\mathcal R}(\lambda_{\sigma(1)};x;\e)\dots {\mathcal R}(\lambda_{\sigma(k)};x;\e)\right]}
{\prod_{i=1}^k (\lambda_{\sigma(i)}-\lambda_{\sigma(i+1)})} \,, \quad k\geq 3 \,. \label{npoint}
\end{align}
Here $S_k$ and $C_k$ are the symmetric group and standard 
cyclic subgroup,  with $\sigma(k+1)=\sigma(1)$ for $\sigma\in S_k$.   
\end{thm}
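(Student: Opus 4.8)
The plan is to derive both identities from the general matrix-resolvent formulas for tau-functions of the Toda lattice hierarchy in \cite{DY1}, after rewriting the $\omega$-descendent correlators as logarithmic derivatives of the Gromov--Witten tau-function. By the Toda theorem quoted above, $e^{\F}$ is the tau-function $\tau_n$ of the GW solution; the spatial variable $T^1_0=x$ fixes the lattice site $n=x/\e$, which is exactly the specialization $\R(\lambda;x;\e)=R_{x/\e}(\lambda;\e)$ of Definition \ref{defiRn}. Under the Toda dictionary the two families of times $T^1_i$ and $T^2_i$ realize, respectively, the extended (logarithmic) flows and the standard flows of the hierarchy; since $\omega=\phi_2$, the descendents $\tau_i(\omega)$ are precisely the standard Toda flows, so that each $\bll\tau_{i_1}(\omega)\cdots\tau_{i_k}(\omega)\brr(x;\e,1)$ is a $k$-th logarithmic derivative of $\tau_n$ along these standard times, evaluated at the origin of all remaining times. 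This is why the resolvent of the ordinary Toda lattice, rather than any extended object, is the relevant tool. I would work throughout at $q=1$ as in the statement, the general $q$ being recovered from \eqref{scaling2}.

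First I would treat $k=2$. The two-point matrix-resolvent identity of \cite{DY1} expresses the generating series $\sum_{i,j\ge0}\frac{(i+1)!\,(j+1)!}{\lambda_1^{i+2}\lambda_2^{j+2}}\,\partial^2\log\tau_n$ of second logarithmic derivatives along the standard flows in terms of $R_n$. Its spectral weights $(i+1)!/\lambda^{i+2}$ are exactly those built into \eqref{mainsum}, so this series is $F_2$, and the identity reads $F_2=\big(\tr[\R(\lambda_1)\R(\lambda_2)]-1\big)/(\lambda_1-\lambda_2)^2$. The subtracted $1$ and the denominator are forced by the normalization \eqref{r3}: expanding $\tr[\R(\lambda_1)\R(\lambda_2)]$ as $\lambda_1,\lambda_2\to\infty$ gives leading term $1$, while the universal ``Cauchy kernel'' singularity $(\lambda_1-\lambda_2)^{-2}$ is the sole disconnected contribution, removed to leave the connected correlator.

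For $k\ge3$ I would invoke the $k$-point identity of \cite{DY1}, whose mechanism is as follows. By \eqref{r2} the matrix $R_n(\lambda)$ has trace $1$ and determinant $0$, hence is a rank-one idempotent for each $\lambda$; writing it as such and using the defining difference equation \eqref{r1} to propagate the shift $n\mapsto n+1$ through a product $\R(\lambda_1)\cdots\R(\lambda_k)$ collapses the generating series of $k$-th log-derivatives into a single cyclic-invariant combination of traces. Cyclic invariance of the trace then reduces the summation from $S_k$ to $S_k/C_k$, giving \eqref{npoint}. For $k\ge3$ no universal disconnected term appears, so---unlike \eqref{twopoint}---there is no additive correction; applied formally to $k=2$, the same cyclic sum produces $+\tr[\R(\lambda_1)\R(\lambda_2)]/(\lambda_1-\lambda_2)^2$, and \eqref{twopoint} is exactly this minus the subtracted $(\lambda_1-\lambda_2)^{-2}$.

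The step I expect to be the real obstacle is the translation rather than the algebra. One must confirm that the $\omega$-descendent flows are exactly the standard Toda flows packaged by $R_n$, and that the weights $(i+1)!/\lambda^{i+2}$ in \eqref{mainsum} match the $\lambda\to\infty$ expansion of $R_n$ term by term, so that \eqref{mainsum} and the trace expressions agree as formal series in $\lambda_1^{-1},\dots,\lambda_k^{-1}$. This rests on the input---established in \cite{DY2} and underlying Definition \ref{defiRn}---that at the locus $T^\alpha_i=\delta_{\alpha1}\delta_{i0}\,x$ the Lax matrix of the GW solution is precisely $U_n(\lambda;\e)$, with entries $\e n+\tfrac{\e}{2}-\lambda$ and $\pm1$; it is this identification of the initial data that certifies that $R_n$ computes the correct correlators. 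Once it is in hand, the two-point and $k$-point identities of \cite{DY1} apply verbatim.
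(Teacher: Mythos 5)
Your proposal follows the paper's own route: invoke the Toda conjecture/theorem to identify $e^{\F^{\rm s}}$ with a tau-function of the Toda Lattice Hierarchy, note that the $\tau_i(\omega)$-descendent times are the standard Toda flows, and then apply the two-point formula \eqref{taun1} and the $k$-point formula (Theorem A) of \cite{DY1} at $\bt={\bf 0}$, the only $\mathbb{P}^1$-specific input being the initial data $v_n=\e n+\frac{\e}{2}$, $w_n=1$, which turns the generic Lax matrix into the $U_n(\lambda;\e)$ of Definition \ref{defiRn}. The single step you outsource to \cite{DY2} --- that initial-value identification --- is precisely what the paper proves in-house as Lemma \ref{simplelemma}, deriving $v(x,{\bf 0};\e)=x+\frac{\e}{2}$ from the string equation \eqref{string} and $u(x,{\bf 0};\e)=0$ from degree--dimension matching together with the genus-zero primary potential $\frac12 (v^1)^2 v^2 + e^{v^2}$; with that derivation supplied, your argument coincides with the paper's.
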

\noindent The proof, based on the Toda conjecture,  
uses a simple observation \cite{DY2} and the MR approach \cite{DY1}. 
The idea of the proof has been explained in \cite{DY2}; we provide the details in Section \ref{section3} of the current paper. 

The following property, proved in Section \ref{sec3}, is related to the concept of {\it bispectrality} (see e.g.~\cite{DG}).
\begin{thm} \label{pbispec}
The matrix-valued formal series $\R(\lambda;x;\e)$ depends only on $\e$ and $\lambda-x$. 
\end{thm}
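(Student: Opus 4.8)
The plan is to prove Theorem 2 by analyzing the defining relations of the matrix resolvent directly, showing that the $x$-dependence enters only through the combination $\lambda - x$. Recall that $\mathcal{R}(\lambda;x;\e) = R_{x/\e}(\lambda;\e)$, so I must understand how the discrete index $n = x/\e$ in $R_n(\lambda;\e)$ couples to $\lambda$.

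First I would examine the matrix $U_n(\lambda;\e)$ from Definition 2. Its only $\lambda$- and $n$-dependence sits in the top-left entry $\e n + \tfrac{\e}{2} - \lambda$, which depends on $n$ and $\lambda$ only through $\e n - \lambda$. Writing $x = \e n$, this is exactly $-(\lambda - x - \tfrac{\e}{2})$, so $U_n$ depends only on $\e$ and on $\lambda - x$ (up to the fixed shift $\e/2$). The key observation to exploit is that the shift operator $n \mapsto n+1$ in the recursion \eqref{r1} corresponds precisely to $x \mapsto x + \e$, i.e. to shifting $\lambda - x$ by $-\e$.

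The main step is then to show that this structural feature of $U_n$ propagates to $R_n$. The plan is to introduce a function $\widehat{R}$ of a single spectral-type variable $\mu := \lambda - x = \lambda - \e n$ and show that setting $R_n(\lambda;\e) = \widehat{R}(\lambda - \e n;\e)$ is consistent with, and uniquely determined by, the three conditions \eqref{r1}--\eqref{r3}. For \eqref{r1}, observe that $R_{n+1}(\lambda;\e) = \widehat{R}(\lambda - \e(n+1);\e) = \widehat{R}(\mu - \e;\e)$ and $R_n(\lambda;\e) = \widehat{R}(\mu;\e)$; similarly $U_n(\lambda;\e) = \widehat{U}(\mu;\e)$ for a single matrix $\widehat{U}$ built from $-\mu - \tfrac{\e}{2}$. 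Hence the recursion \eqref{r1} becomes a relation purely among $\widehat{R}(\mu-\e)$, $\widehat{R}(\mu)$, and $\widehat{U}(\mu)$ with no separate $n$-dependence. The normalization \eqref{r2} and the asymptotics \eqref{r3} as $\lambda \to \infty$ (with $n$ fixed, equivalently $\mu \to \infty$) are manifestly conditions on $\widehat{R}$ alone. Since Definition 2 guarantees that $R_n$ is the \emph{unique} formal solution and since the ansatz $R_n(\lambda;\e) = \widehat{R}(\lambda-\e n;\e)$ solves the same uniquely-solvable system, the two must coincide.

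The anticipated obstacle is making the uniqueness argument airtight at the level of formal power series in $\lambda^{-1}$: I must check that the translated problem for $\widehat{R}$ admits a solution and that the coefficient-by-coefficient recursion determined by \eqref{r1}--\eqref{r3} indeed has the single-variable form claimed, so that no spurious $n$-dependence can creep in through the boundary/asymptotic data. Concretely I would verify by induction on the order in $\lambda^{-1}$ that each coefficient of $R_n$ is a polynomial in $n$ and $\e$ depending only on $\e n - \lambda$; the base case is \eqref{r3} and the inductive step uses \eqref{r1} rewritten in the shifted variable together with the trace and determinant constraints \eqref{r2} to pin down each successive coefficient. Once this inductive structure is established, the statement of Theorem 2 follows immediately by setting $\mu = \lambda - x$.
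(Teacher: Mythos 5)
Your proposal is correct and follows essentially the same route as the paper: the paper's proof defines $R_n^*(\lambda;\e)=M^*\bigl(\frac{\lambda}{\e}-n;\frac{1}{\e}\bigr)$ (your $\widehat{R}(\lambda-\e n;\e)$ in rescaled variables), checks that it satisfies \eqref{r1}--\eqref{r3}, and concludes by the uniqueness stated in Definition~\ref{defiRn}. Your planned coefficient-by-coefficient existence argument for the single-variable (shifted) problem is precisely the content of Proposition~\ref{ptopod}, which the paper establishes by reducing the topological difference equation to a scalar third-order difference equation solved recursively in powers of $z^{-1}$.
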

 In other words, $\R(\lambda;x;\e)$ has the form 
\beq
\label{RDon}
\R(\lambda;x;\e) \= M\Bigl(\frac{\lambda-x}\e; \frac1\e\Bigr)
\eeq
for some $M(z;s)$, which is a formal power series in $z^{-1}$. On the other hand, from its definition, $\R(\lambda;x;\e)$ satisfies 
$$
\R(\lambda; x+\e; \epsilon) \, \begin{pmatrix} x+\frac\e 2-\lambda & 1\\ -1 & 0\end{pmatrix} 
\, - \, \begin{pmatrix} x+\frac\e 2-\lambda & 1\\ -1 & 0\end{pmatrix} \, \R(\lambda; x; \epsilon) \= 0
$$
which in terms of $M(z;s)$ becomes
\beq\label{m1xp} 
M(z-1;  s) \begin{pmatrix} z-\frac12 & -s\\ s & 0\end{pmatrix} \=  \begin{pmatrix} z-\frac12 & -s\\ s & 0\end{pmatrix} \, M(z;s) \,.
\eeq
Similarly, from equations \eqref{r2} and \eqref{r3} we deduce that $M(z;s)$ also satisfies
\beq\label{m1xp1} 
\tr \, M(z;s) \= 1 \,, \qquad M(\infty;s) \= \begin{pmatrix} 1 & 0  \\  0 & 0 \\ \end{pmatrix}\,.
\eeq
We call \eqref{m1xp} the {\it topological difference equation}, which is an analogue of the topological ODE \cite{BDY2,BDY3}.
\begin{prop}\label{ptopod}  
There exists a unique element $M^*$ in $M_2\bigl(\CC(s) \bllm z^{-1}\brrm\bigr)$ satisfying equations \eqref{m1xp}--\eqref{m1xp1}.  Moreover, 
$M^*$ belongs to $M_2\bigl(\QQ[s] \bllm z^{-1}\brrm\bigr)$, and it satisfies $\det M^*=0$.
\end{prop}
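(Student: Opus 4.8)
The plan is to prove existence, uniqueness, and the additional properties (rationality, polynomiality in $s$, and $\det M^* = 0$) by setting up \eqref{m1xp} as a recursion on the coefficients of the formal expansion $M^*(z;s) = \sum_{j\geq 0} M_j(s)\, z^{-j}$, where $M_0 = \bigl(\begin{smallmatrix} 1 & 0 \\ 0 & 0 \end{smallmatrix}\bigr)$ is fixed by \eqref{m1xp1}. First I would write $N(z;s) := \bigl(\begin{smallmatrix} z - \frac12 & -s \\ s & 0 \end{smallmatrix}\bigr)$ and expand \eqref{m1xp} into the form $M(z-1;s)\,N(z;s) = N(z;s)\,M(z;s)$. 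Substituting the series and comparing powers of $z^{-1}$ (after accounting for the shift $z \mapsto z-1$, which mixes coefficients via the binomial expansion of $(z-1)^{-j}$) yields, at each order, a linear equation for $M_j(s)$ in terms of $M_0,\dots,M_{j-1}$. The key structural point is that the leading matrix $N$ has the off-diagonal entry $s$ in the lower-left corner, so the commutator-type operator $X \mapsto XN_0 - N_0 X$ acting on the unknown coefficient (with $N_0$ the leading part in $z$) is invertible on the appropriate complement once the trace condition \eqref{m1xp1} is imposed; this is what forces uniqueness at every order and determines each $M_j$ rationally in $s$.

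Next I would verify that the trace and normalization conditions of \eqref{m1xp1} are compatible with and in fact pick out the solution of the recursion. Taking the trace of \eqref{m1xp} gives $\tr[M(z-1;s)N(z;s)] = \tr[N(z;s)M(z;s)]$, and since $N$ depends on $z$ only through its top-left entry, this should reduce to a scalar constraint that is automatically consistent with $\tr M = 1$; I would check that imposing $\tr M_j = 0$ for $j \geq 1$ is exactly the one extra scalar condition needed to make the order-by-order linear system uniquely solvable. For the rationality claim, each step solves a linear system over $\CC(s)$ whose coefficient matrix has entries polynomial in $s$ with the relevant minor a power of $s$ (coming from the corner entries of $N_0$), so $M_j \in M_2(\CC(s))$, establishing existence and uniqueness in $M_2(\CC(s)\bllm z^{-1}\brrm)$.

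To upgrade from rationality to polynomiality, $M^* \in M_2(\QQ[s]\bllm z^{-1}\brrm)$, I would track the denominators more carefully: the inversion at each step divides by $s$ (or a power of $s$), so a priori $M_j$ could have poles at $s=0$, and the task is to show these cancel. The cleanest route is to exhibit, independently, a solution known to be polynomial in $s$ and then invoke uniqueness. Here I would appeal to Theorem~\ref{pbispec} and equation \eqref{RDon}: the matrix resolvent $\R(\lambda;x;\e)$ lies in $\mathrm{Mat}_2(\ZZ[n,\e]\bllm \lambda^{-1}\brrm)$ with $n = x/\e$, and rewriting in terms of $z = (\lambda-x)/\e$ and $s = 1/\e$ shows that $M(z;s)$ obtained from $\R$ has coefficients that are polynomial in $s$. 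Since this $M$ satisfies \eqref{m1xp}--\eqref{m1xp1}, by the uniqueness just established it must equal $M^*$, giving $M^* \in M_2(\QQ[s]\bllm z^{-1}\brrm)$ at once. Finally, the condition $\det M^* = 0$ follows the same way from $\det R_n = 0$ in \eqref{r2}, which is preserved under the change of variables.

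The main obstacle I anticipate is the polynomiality in $s$: the naive recursion only gives rational functions, and proving directly that all negative powers of $s$ cancel by manipulating the recursion would be delicate. Routing around this via the uniqueness argument together with the already-established integrality of $R_n$ is what makes the proof clean, so the crux is really setting up the order-by-order recursion carefully enough that uniqueness in $M_2(\CC(s)\bllm z^{-1}\brrm)$ is genuinely proved before the polynomiality is transferred from $R_n$.
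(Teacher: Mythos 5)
Your first two paragraphs (existence and uniqueness over $\CC(s)$ via an order-by-order recursion anchored by the trace condition) are essentially the paper's argument: the paper writes $M=\bigl(\begin{smallmatrix}1+a & b\\ c & -a\end{smallmatrix}\bigr)$, solves for $b,c$ in terms of $a$ as in \eqref{bca}, and reduces \eqref{m1xp} to a third-order scalar difference equation \eqref{scalartopo} whose coefficient recursion has leading term $-8(k+2)A_{k+1}=\cdots$, with $-8(k+2)\neq 0$ giving existence and uniqueness. But your route to the remaining claims has a genuine logical flaw: you invoke Theorem~\ref{pbispec} and \eqref{RDon} to transfer polynomiality in $s$ and $\det M^*=0$ from the matrix resolvent, whereas in the paper Theorem~\ref{pbispec} is \emph{proved using} Proposition~\ref{ptopod} (the text states this explicitly, and the proof defines $R_n^*(\lambda;\e)=M^*\bigl(\frac{\lambda}{\e}-n;\frac1\e\bigr)$ and matches it with $R_n$ by uniqueness). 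At the point where Proposition~\ref{ptopod} must be proved there is no independent bispectrality statement available --- indeed, without it the assignment ``$M(z;s)$ obtained from $\R$'' is not even well defined, since a priori $R_n(\lambda;\e)$ depends on $n$ and $\lambda$ separately rather than through $\lambda/\e-n$ alone. The circle is repairable: having proved uniqueness of $M^*$ over $\CC(s)$, you could yourself define $R_n^*$ as above, check \eqref{r1}--\eqref{r3}, and use the uniqueness of $R_n$ (asserted in Definition~\ref{defiRn}) to identify $R_n^*=R_n$ --- but that is exactly the paper's proof of Theorem~\ref{pbispec}, so it must be carried out inside your argument, not cited. Even then the transfer is incomplete as stated: from $R_n\in{\rm Mat}_2\bigl(\ZZ[n,\e]\bllm\lambda^{-1}\brrm\bigr)$ at $n=0$ one only gets that each coefficient $M_j(s)$ is a \emph{Laurent} polynomial in $s$; excluding the negative powers requires a degree bound in $\e$ on the coefficient of $\lambda^{-j}$ (a homogeneity count), which you do not supply.

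Moreover, the premise that motivates your detour --- that the naive recursion divides by powers of $s$ ``coming from the corner entries of $N_0$'' --- is incorrect. The operator inverted at each order is built from the leading part $zE$ with $E={\rm diag}(1,0)$, not from the $s$-dependent constant part: the commutator with $E$ determines the off-diagonal entries outright, and after imposing $\tr M_j=0$ the remaining diagonal unknown is fixed by a recursion whose leading coefficient ($-8(k+2)$ in the paper's scalar form) is a nonzero rational number independent of $s$; the entries $\pm s$ only ever \emph{multiply} previously known data. Hence polynomiality in $s$ falls out of the recursion directly, with no cancellation of denominators to verify --- this is precisely the paper's one-line observation. Likewise $\det M^*=0$ has a short direct proof you miss: taking determinants of both sides of \eqref{m1xp} gives $s^2\det M^*(z-1)=s^2\det M^*(z)$, so $\det M^*$ is invariant under $z\mapsto z-1$; since it lies in $z^{-1}\QQ[s]\bllm z^{-1}\brrm$ and its $z^{-1}$-coefficient vanishes (because $A_0=0$), shift-invariance forces $\det M^*\equiv 0$. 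So the correct strategy is the opposite of your assessment: the direct recursion handles all the claims, and the transfer from $R_n$ is the delicate (and, as written, circular) step.
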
  
\noindent 
See Section \ref{sec3} for the proof. 
Proposition \ref{ptopod} will be used to prove Theorem~\ref{pbispec}, with $M=M^*$ (see equation~\eqref{RDon} above).

The following theorem, which was conjectured in~\cite{DY2}, gives an explicit formula for the
matrix $M(z;s)$.  We will prove it in Section~\ref{section3}. (A different proof was given recently by O.~Marchal \cite{M}.)

\begin{thm} \label{conj-corr} 
The matrix-valued power series $M=M(z;s)$ has the following explicit expression
\beq\label{ram}
M \=  \begin{pmatrix} 
1+\alpha & Q - P\\
Q + P & -\alpha
\end{pmatrix} \, , 
\eeq
where $\alpha=\alpha(z;s),\, P=P(z;s), \, Q=Q(z;s) \, \in \QQ[s]\bllm z^{-1}\brrm$ are given by 
\begin{align}
& \alpha(z;s) \= 2 \sum_{j=0}^\infty \frac{1}{z^{2j+2}} \sum_{i=0}^j   s^{2i+2}  \, \frac{1}{ i! (i+1)!} \, 
\sum_{\ell=0}^i  (-1)^\ell \bigl(i-\ell+\tfrac12\bigr)^{2j+1} \binom{2i+1}{\ell} \, , \label{defofalpha} \\
& P(z;s) \= \sum_{j=0}^\infty \frac{1}{z^{2j+1}} \sum_{i=0}^j   s^{2i+1}  \, \frac{1}{i!^2} \, 
\sum_{\ell=0}^i  (-1)^\ell \bigl(i-\ell+\tfrac12\bigr)^{2j} \biggl[\binom{2i}{ \ell } - \binom{2i}{\ell-1}\biggr] \, , \\
& Q(z;s) \=  - \frac12 \sum_{j=0}^\infty \frac{1}{z^{2j+2}} \sum_{i=0}^j   s^{2i+1}  \, \frac{2i+1}{i!^2} \, 
\sum_{\ell=0}^i  (-1)^\ell \bigl(i-\ell+\tfrac12\bigr)^{2j} \biggl[\binom{ 2i }{ \ell } - \binom{ 2i}{ \ell-1} \biggr] \, .
\end{align}
\end{thm}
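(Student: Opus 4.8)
\medskip

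The plan is to verify that the matrix $M$ given by \eqref{ram} satisfies the topological difference equation \eqref{m1xp} together with the normalization \eqref{m1xp1}; by the uniqueness part of Proposition \ref{ptopod} this identifies it with $M^*$ and proves the theorem. The normalization is immediate, since $\alpha,P,Q\in z^{-1}\QQ[s]\bllm z^{-1}\brrm$: one has $\tr M=(1+\alpha)+(-\alpha)=1$ identically, and $M(\infty;s)=\left(\begin{smallmatrix}1&0\\0&0\end{smallmatrix}\right)$ because every off‑diagonal entry is $O(z^{-1})$. Thus the whole content of the theorem is the verification of \eqref{m1xp}, which I reduce to a transparent identity for modified Bessel functions.

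First I would put $\alpha,P,Q$ into closed form. In each inner sum over $\ell$ the summand is invariant under the reflection $\ell\mapsto 2i+1-\ell$ (the sign $(-1)^\ell$, the factor $(i-\ell+\tfrac12)^m$, and the binomial combination each transform so that the product is preserved), so the partial sum over $0\le\ell\le i$ equals one half of the complete alternating sum over $0\le\ell\le 2i+1$. The latter is evaluated by $\sum_{\ell}(-1)^\ell\binom{n}{\ell}e^{(i-\ell+1/2)t}=e^{(i+1/2)t}(1-e^{-t})^n$: for $n=2i+1$ this gives $(2\sinh\tfrac t2)^{2i+1}$, while the combination $\binom{2i}{\ell}-\binom{2i}{\ell-1}$ yields $2\cosh\tfrac t2\,(2\sinh\tfrac t2)^{2i}$. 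Hence the inner sums equal $\tfrac12(2j+1)!\,[t^{2j+1}](2\sinh\tfrac t2)^{2i+1}$ and $(2j)!\,[t^{2j}]\cosh\tfrac t2\,(2\sinh\tfrac t2)^{2i}$ respectively (and in particular vanish for $j<i$, which justifies interchanging the $i$- and $j$-sums). Summing over $i$ through $\sum_i u^{2i}/i!^2=I_0(2u)$ and $\sum_i u^{2i+1}/(i!(i+1)!)=I_1(2u)$, and recording the $j$-sum via the formal Borel--Laplace transform $\mathcal{L}\colon t^m\mapsto m!\,z^{-m-1}$, I obtain $\alpha=\mathcal{L}[s\,I_1]$, $P=\mathcal{L}[s\cosh\tfrac t2\,I_0]$ and $Q=-\mathcal{L}[s\sinh\tfrac t2\,I_0]$, where $I_\nu=I_\nu(4s\sinh\tfrac t2)$. (These Bessel densities also underlie the one-variable hypergeometric representation announced in the abstract.)

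Equivalently $M=\left(\begin{smallmatrix}1&0\\0&0\end{smallmatrix}\right)+\mathcal{L}[\widehat M]$ with density $\widehat M(t)=s\left(\begin{smallmatrix}I_1 & -e^{t/2}I_0\\ e^{-t/2}I_0 & -I_1\end{smallmatrix}\right)$. Under $\mathcal L$ the two operations appearing in \eqref{m1xp} become elementary: the shift $z\mapsto z-1$ corresponds to multiplying the density by $e^{t}$ (since $\mathcal{L}[\hat f]\,|_{z\mapsto z-1}=\mathcal{L}[e^t\hat f]$), and multiplication by $z-\tfrac12$ corresponds to $\hat f\mapsto \hat f(0)+\mathcal{L}[\hat f'-\tfrac12\hat f]$, with the value $\hat f(0)$ feeding into the constant (polynomial) part together with $\left(\begin{smallmatrix}1&0\\0&0\end{smallmatrix}\right)$. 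Substituting these rules into \eqref{m1xp} and comparing powers of $z$ splits the equation into the cancellation of the $z^0$-terms (the boundary/constant matching) and a first-order linear ODE for the $2\times2$ density $\widehat M(t)$; the latter is verified directly from $I_0'=I_1$ and the recurrence $I_1'=I_0-(4s\sinh\tfrac t2)^{-1}I_1$ (i.e.\ Bessel's equation), together with $(\sinh\tfrac t2)'=\tfrac12\cosh\tfrac t2$. Since \eqref{m1xp} is an identity in $\QQ[s]\bllm z^{-1}\brrm$ and $\mathcal L$ is a coefficientwise bijection onto such series, the density identity is equivalent to \eqref{m1xp}, and Proposition \ref{ptopod} then finishes the proof.

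I expect the principal difficulty to be this last step: organizing the passage between the $z$-picture and the density picture—in particular keeping track of the boundary terms $\hat f(0)$ produced by multiplication by $z$, their interplay with the constant block $\left(\begin{smallmatrix}1&0\\0&0\end{smallmatrix}\right)$, and the half-integer shifts carried by the $e^{\pm t/2}$ factors—so that the matrix equation \eqref{m1xp} collapses to a single density ODE. Once that ODE is isolated, its verification from the standard Bessel identities is routine.
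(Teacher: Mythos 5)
Your proposal is correct, but it takes a genuinely different route from the paper's. The paper proves Theorem \ref{conj-corr} jointly with Theorem \ref{thm05}: it introduces the analytic matrix $B(z;s)$ built from the ${}_1F_2$-functions \eqref{Hyper1}--\eqref{Hyper2}, shows by a truncation estimate plus partial fractions that $B$ has the right-hand side of \eqref{ram} as its full asymptotic expansion, verifies \eqref{m1xp}--\eqref{m1xp1} for $B$ via the contiguity identities \eqref{1sum2}--\eqref{1diff}, and only then transfers the difference equation to the formal series and invokes the uniqueness of Proposition \ref{ptopod}. You instead stay entirely inside the formal series ring: your halving symmetry, the evaluation of the alternating sums via $\sum_\ell(-1)^\ell\binom{n}{\ell}e^{(i-\ell+1/2)t}$, and the resummation to $\alpha=\mathcal L\bigl[sI_1\bigr]$, $P=\mathcal L\bigl[s\cosh\tfrac t2\,I_0\bigr]$, $Q=-\mathcal L\bigl[s\sinh\tfrac t2\,I_0\bigr]$ with argument $4s\sinh\tfrac t2$ all check out (in the $Q$ case the factor $2i+1$ and the prefactor $-\tfrac12$ are exactly absorbed in passing from $[t^{2j}]$ of the derivative to $[t^{2j+1}]$ of the density). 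The bookkeeping you flag as the main risk does close: writing the transfer matrix as $(z-\tfrac12)E+A_0$ with $E=\left(\begin{smallmatrix}1&0\\0&0\end{smallmatrix}\right)$, $A_0=\left(\begin{smallmatrix}0&-s\\s&0\end{smallmatrix}\right)$, the polynomial parts of \eqref{m1xp} contribute $EA_0-A_0E$, the boundary terms contribute $\widehat M(0)E-E\widehat M(0)$, and since $\widehat M(0)=A_0$ these cancel identically; the residual density identity $e^t\widehat M A_0+e^t\bigl(\widehat M'+\tfrac12\widehat M\bigr)E=A_0\widehat M+E\bigl(\widehat M'-\tfrac12\widehat M\bigr)$ then holds entrywise by the stated Bessel recurrences (I verified all four entries), and uniqueness in $M_2\bigl(\CC(s)\bllm z^{-1}\brrm\bigr)$ applies since your matrix lies in $M_2\bigl(\QQ[s]\bllm z^{-1}\brrm\bigr)$ with trace $1$ and the correct value at $z=\infty$. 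As for what each approach buys: the paper's detour through $B$ is not wasted work, since Theorem \ref{thm05} (that the analytic $B$ has $M$ as asymptotic expansion) is indispensable later for the analytic $k$-point functions and the four asymptotic regimes, and its verification of \eqref{m1xp} is a two-line contiguity computation; your argument proves Theorem \ref{conj-corr} with no analysis at all and makes the Bessel structure underlying Proposition \ref{BBB} visible already at the formal level, but it leaves Theorem \ref{thm05} still to be established separately by the paper's asymptotic step.
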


\subsection{Explicit formulas in terms of hypergeometric functions and Bessel functions}  \label{sec1.3}
Define a meromorphic matrix-valued function $B=B(z;s)$ by 
\beq\label{defBzs}
B(z;s) \= \dfrac12 
\begin{pmatrix} 
1+ G\bigl(z, s\bigr) &  \frac{4s}{1-2z} \widetilde G\bigl(z -1 , s\bigr)\\  
\\
\frac{4s}{1+2z } \widetilde G\bigl(z , s\bigr) & 1-  G\bigl(z , s\bigr)\\ \end{pmatrix}\,, \quad z\in \CC-\mathbb{Z}_{\rm odd}\,,~s\in \CC
\eeq
where $G(z;s)$ and $\widetilde G(z;s)$ are the (generalized) hypergeometric functions
\begin{align} 
& G(z;s) \=  {}_1F_2\Bigl(\frac12; \frac12-z, \frac12+z; - 4 s^2\Bigr) 
\= \sum_{m=0}^\infty \binom{2m}{m}  \frac{s^{2m}}{(z-m+\frac12)_{2m}} \,, \label{Hyper1} \\
& \widetilde G(z;s) \=    {}_1F_2  \Bigl(\frac12; \frac12-z, \frac32+z; - 4 s^2\Bigr) 
\= \sum_{m=0}^\infty \binom{2m}{m} \frac{z+\frac12}{(z-m+\frac12)_{2m+1}}  s^{2m} \,. \label{Hyper2}
\end{align}
Here, $(\,)_k$ is the increasing Pochhammer symbol, i.e. $(x)_k := \Gamma(x+k)/\Gamma(x) =x (x+1) \cdots (x+k-1)$. 
Note that the series in~\eqref{Hyper1}, \eqref{Hyper2} converge 
absolutely and locally uniformly away from $z\in \ZZ+\frac12$ so that the product 
$\cos (\pi z ) \, B(z;s)$ extends to a  (matrix-valued) holomorphic function on all of $\CC^2$.

\begin{thm}
\label{thm05} 
For fixed $s\in\CC$, the asymptotic expansion of $B(z;s)$ in all orders as $z\rightarrow \infty$ at a bounded distance from $\ZZ+\frac12$ coincides with the
formal power series $M(z;s)$, i.e. $B(z;s) \, \sim \,  M(z;s)$.
\end{thm}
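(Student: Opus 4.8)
The plan is to deduce the statement from the uniqueness part of Proposition~\ref{ptopod}. Fix $s\in\CC$ and let $\widehat B(z;s)$ denote the asymptotic expansion of $B(z;s)$ as $z\to\infty$ at a bounded distance from $\ZZ+\frac12$. I would show that $\widehat B$ exists as an element of $M_2\bigl(\QQ[s]\bllm z^{-1}\brrm\bigr)$ and that it satisfies the topological difference equation \eqref{m1xp} together with the normalization \eqref{m1xp1}. Proposition~\ref{ptopod} then forces $\widehat B=M^*=M$, which is exactly the assertion $B\sim M$.

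First I would establish the existence of the expansion and control its coefficients. Writing the Pochhammer denominators in \eqref{Hyper1}--\eqref{Hyper2} as Gamma quotients, the $m$-th summand of $G$ is $\binom{2m}{m}s^{2m}\,\Gamma(z-m+\frac12)/\Gamma(z+m+\frac12)$ (and similarly for $\widetilde G$, with an extra factor $z+\frac12$ and a shifted denominator), which admits a complete asymptotic expansion in $z^{-1}$ beginning at order $z^{-2m}$. For fixed $s$ the series converge absolutely, and since $\binom{2m}{m}(|s|/|z|)^{2m}$ decays geometrically once $|z|>2|s|$, the tail $\sum_{m>M}$ is $O(z^{-2M-2})$ uniformly away from the half-integer poles. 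Hence the series may be expanded term by term, yielding a well-defined $\widehat B$ whose coefficient of each $z^{-j}$ is a finite sum over $m$ with $2m\le j$, thus a polynomial in $s$; this gives $\widehat B\in M_2\bigl(\QQ[s]\bllm z^{-1}\brrm\bigr)$. The same uniformity shows that the expansion of the shifted matrix $B(z-1;s)$ is the formal shift $\widehat B(z-1;s)$, so any exact functional identity for $B$ passes to the corresponding formal identity for $\widehat B$.

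The normalization \eqref{m1xp1} is immediate: $\tr B=\frac12\bigl((1+G)+(1-G)\bigr)=1$ identically, while $G,\widetilde G\to1$ and the off-diagonal prefactors $\frac{4s}{1\mp 2z}\to0$ as $z\to\infty$, so that $\widehat B(\infty;s)=\left(\begin{smallmatrix}1&0\\0&0\end{smallmatrix}\right)$. For the difference equation I would write $B(z-1)L(z)=L(z)B(z)$ entrywise with $L(z)=\left(\begin{smallmatrix} z-\frac12 & -s\\ s&0\end{smallmatrix}\right)$. Its $(2,2)$ component reads $-B_{21}(z-1)=B_{12}(z)$, which holds identically because $1+2(z-1)=-(1-2z)$; the remaining three components collapse to the two contiguity relations
\begin{align*}
& G(z;s)+G(z-1;s) \= 2\,\widetilde G(z-1;s)\,, \\
& \bigl(z-\tfrac12\bigr)\bigl(G(z-1;s)-G(z;s)\bigr) + \frac{4s^2}{3-2z}\,\widetilde G(z-2;s) + \frac{4s^2}{1+2z}\,\widetilde G(z;s) \= 0\,.
\end{align*}
Both are proved by comparing coefficients of $s^{2m}$ on the two sides and using the elementary identities $\Gamma(z-m+\tfrac12)=(z-m-\tfrac12)\Gamma(z-m-\tfrac12)$ and $\binom{2m}{m}=\frac{2(2m-1)}{m}\binom{2m-2}{m-1}$, whereupon each comparison reduces to a polynomial identity in $z$ and $m$. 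Since $B$ thus satisfies \eqref{m1xp} exactly, $\widehat B$ satisfies it formally, and uniqueness gives $\widehat B=M$.

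The main obstacle is not the algebra of the contiguity relations, which is routine, but the analytic transfer step: one must verify that the convergent hypergeometric series genuinely possess Poincar\'e asymptotic expansions uniformly on a neighbourhood of infinity avoiding the half-integer poles, and that this uniformity is strong enough both to interchange the summation over $m$ with the asymptotics and to commute with the shift $z\mapsto z-1$. I expect this to rest on the geometric tail estimate above (a Watson-type argument); once it is in place, the remainder is the uniqueness argument of Proposition~\ref{ptopod} applied to the two identities displayed.
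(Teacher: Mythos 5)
Your proposal is correct and is essentially the paper's own proof: the paper establishes the asymptotic expansion of $B$ via the same two-range tail estimate on the ${}_1F_2$ series (terms with $m\le\frac12|z|$ bounded geometrically by $(4S/|z|)^{2m}$, larger $m$ by a factorial bound using the distance $\delta$ to $\ZZ+\frac12$), verifies that $B$ satisfies \eqref{m1xp}--\eqref{m1xp1} through exactly your two contiguity identities (they are \eqref{1sum2} and \eqref{1diff} after the shift $z\mapsto z-1$), and concludes by the uniqueness statement of Proposition~\ref{ptopod}. The only divergence is that the paper additionally expands each summand by partial fractions to identify the coefficients of the expansion explicitly, but that computation serves Theorem~\ref{conj-corr} rather than Theorem~\ref{thm05}, so your economized route is fully adequate for the statement at hand.
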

\noindent The proof will be given in Section \ref{sec3}.   Here and throughout this paper, we use~$\sim$ 
for a full asymptotic expansion: e.g. ``$f(\e)\sim g(\e)$ as $\e\to0$" means that the asymptotic expansions of $f$ and~$g$ 
 agree as power series  in~$\e$ to all orders. 
 
We now define {\it analytic $k$-point functions}  $H_k(z_1, \dots, z_k;s)$ ($k\geq2$; the case $k=1$ will be treated later) by
\begin{align}
& H_2(z_1,z_2;s) \:= \frac{\tr \, \bigl[  B(z_1;s) \,  B(z_2;s) \bigr]-1}{(z_1-z_2)^2} \= - \frac12 \,  \tr \,  \biggl[  \frac{ B(z_1;s) - B(z_2;s)}{z_1-z_2}\biggr]^2 , \label{twopointholo} \\
& H_k(z_1, \dots, z_k;s) \:= -\sum_{\sigma\in S_k/C_k} \frac{\tr \,\left[B(z_{\sigma(1)};s)\cdots B(z_{\sigma(k)};s)\right]}
{\prod_{i=1}^k (z_{\sigma_i}-z_{\sigma_{i+1}})} \;.\label{npointholo}
\end{align}
Then equation \eqref{npoint00} and Theorem \ref{thm05} imply that the full asymptotic expansion of $H_k\bigl(\frac{\lambda_1}\e, \dots, \frac{\lambda_k}\e; \frac1\e\bigr)$ 
coincides with $F_k(\lambda_1,\dots,\lambda_k;\e)$ as $\lambda_i\to\infty$ at a bounded distance away from $\e \, \mathbb{Z} + \frac\e2$,
$i=1,\dots,k$.  
Notice that this
statement is not entirely trivial, since the presence of poles in~\eqref{twopointholo} and~\eqref{npointholo} when two $z_i$'s coincide means that a priori 
we must order the $|\lambda_i|$'s in order to obtain an asymptotic expansion.  However, the following proposition (which will be proved in Section \ref{sec3})
implies that the asymptotics are the same for all orderings of the~$|\lambda_i|$'s.

\begin{prop} \label{nopoleH}
The functions $H_k(z_1,\dots,z_k;s)$,~$k\geq 2$ are analytic along the diagonals $z_i=z_j\,, i\neq j$ (as above it is assumed that none of $z_1$, \dots, $z_k$ is a half-integer).
\end{prop}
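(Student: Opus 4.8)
The plan is to reduce to a single diagonal and then treat $k=2$ and $k\ge3$ by different mechanisms. I first record that each $H_k$ is symmetric in $z_1,\dots,z_k$: relabelling the variables by $\pi\in S_k$ permutes the summands among themselves, sending the one indexed by a cyclic class $\sigma$ to the one indexed by $\pi\circ\sigma$ (both the trace, by its invariance, and the product of differences in the denominator transform consistently), and $\pi\circ\sigma$ runs over all of $S_k/C_k$ as $\sigma$ does. Hence it suffices to prove that $H_k$ is holomorphic at a point $p$ with $p_1=p_2$ whose other coordinates are pairwise distinct, distinct from $p_1$, and outside $\ZZ+\tfrac12$; the loci where more than two coordinates collide have codimension $\ge2$, so holomorphy on the full closed diagonal then follows from the standard removable-singularity (Hartogs) theorem once the codimension-one poles are removed.

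For $k=2$ the crucial analytic input is $\det B\equiv0$. Put $f(z_1,z_2):=\tr[B(z_1;s)B(z_2;s)]-1$; by cyclicity of the trace $f$ is symmetric, hence a holomorphic function of $z_1+z_2$ and $(z_1-z_2)^2$ near the diagonal, so $H_2=f/(z_1-z_2)^2$ is holomorphic there if and only if $f(z,z)=0$. Since $\tr B=1$ by the definition \eqref{defBzs}, one has $f(z,z)=\tr[B(z;s)^2]-1=-2\det B(z;s)$, and the $k=2$ case collapses to $\det B\equiv0$ (equivalently, to the manifestly regular second expression in \eqref{twopointholo}). To get $\det B\equiv0$ I would use that $B$ satisfies the analytic topological difference equation \eqref{m1xp} (verified while proving Theorem \ref{thm05}): taking determinants and using $\det\!\begin{pmatrix} z-\frac12 & -s\\ s & 0\end{pmatrix}=s^2$ gives $\det B(z-1;s)=\det B(z;s)$ for $s\neq0$, so $\det B$ is $1$-periodic in $z$. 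For fixed $z_0\notin\ZZ+\tfrac12$ we get $\det B(z_0;s)=\det B(z_0+n;s)$ for all $n\in\NN$, and as $n\to\infty$ the right-hand side tends to the asymptotic value of $\det B$, which by Theorem \ref{thm05} and Proposition \ref{ptopod} is $\det M^*=0$; hence $\det B(z_0;s)=0$.

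For $k\ge3$ the mechanism is instead a residue cancellation resting only on cyclicity of the trace. Index $S_k/C_k$ by the permutations $(b_2,\dots,b_k)$ of $\{2,\dots,k\}$ through the representatives $(1,b_2,\dots,b_k)$; in such a word the neighbours of $1$ are $b_2$ and $b_k$, so $(z_1-z_2)$ divides the denominator exactly when $b_2=2$ or $b_k=2$, cases that are mutually exclusive for $k\ge3$. Thus every summand has at most a simple pole along $\{z_1=z_2\}$. Setting $z_1=z_2=z$ in the regular factors, the $b_2=2$ family contributes, summed over permutations $(c_1,\dots,c_{k-2})$ of $\{3,\dots,k\}$,
\[
-\,\frac{\tr\bigl[B(z;s)^2\,B(z_{c_1};s)\cdots B(z_{c_{k-2}};s)\bigr]}{(z-z_{c_1})(z_{c_1}-z_{c_2})\cdots(z_{c_{k-2}}-z)}\,,
\]
while the $b_k=2$ family, with $X:=B(z_{c_1};s)\cdots B(z_{c_{k-2}};s)$, contributes $+\tr[B(z;s)\,X\,B(z;s)]=+\tr[B(z;s)^2X]$ over the same denominator and the same set of permutations. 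The two families are termwise negatives, so the total residue vanishes identically as a rational function; since the poles are at most simple, $H_k$ is holomorphic along $\{z_1=z_2\}$.

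The main obstacle is the analytic identity $\det B\equiv0$ underpinning the $k=2$ case: spelled out, it is the quadratic relation $1-G(z;s)^2=\frac{16s^2}{1-4z^2}\,\widetilde G(z-1;s)\,\widetilde G(z;s)$ between the hypergeometric functions \eqref{Hyper1}–\eqref{Hyper2}, which is unpleasant to check head-on but drops out of the determinant-plus-periodicity argument above. By contrast the $k\ge3$ cancellation is purely formal. Combining the symmetry reduction, the $k=2$ regularization via $\det B\equiv0$, the $k\ge3$ residue cancellation, and the codimension-$\ge2$ extension yields analyticity of every $H_k$ along all the diagonals.
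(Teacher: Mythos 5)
Your proof is correct and follows essentially the paper's route: the termwise residue cancellation you set up between the $b_2=2$ and $b_k=2$ families is exactly the unpacked form of the paper's one-line regrouping of the sum into commutators $[B(z_k;s),B(z_{\sigma(j)};s)]$, which vanish on the diagonal and kill the simple poles, while your $k=2$ reduction to $\det B\equiv0$ (via $\tr B=1$ and $\tr B^2=1-2\det B$) is precisely the content of the second equality in \eqref{twopointholo} that the paper invokes. Your two supplements --- the periodicity-plus-asymptotics argument for $\det B\equiv0$ (which the paper asserts from Proposition \ref{ptopod} and Theorem \ref{thm05}, and which also follows at once from the rank-one factorization $B=u(z)\,u(-z)^T$ of Proposition \ref{BBB}) and the Hartogs extension across the multi-collision strata --- are sound refinements of points the paper leaves implicit.
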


The next point is very nice.  Normally, even if one knows a closed formula for a collection of $2\times2$ matrices, it is not easy to compute the trace
of their product.  Here, however, there is a nice simplification which lets us write the above traces as products.
The reason is that, since $\,\det B(z;s)=0$ by Proposition~\ref{ptopod} and Theorem~\ref{thm05}, the matrix $B(z;s)$ must 
factor as the product of a column vector and a row vector. This factorization, given explicitly in the following proposition,
will immediately lead us to a factorization formula for the traces. 
\begin{prop}\label{BBB} The matrix valued function $B$ has the following expressions
\beq\label{RDonM2}
B(z;s) \= u(z) \, u(-z)^T \=  \frac{\pi s} {\cos (\pi z) }  V(z) \, V(-z)^T \,,
\eeq 
where  
$$u(z)=u(z;s) = \begin{pmatrix} j_{z} (s^2) \\  \frac{s}{z+\frac12} j_{z+1} (s^2)
\end{pmatrix}\,, \qquad   V(z) = V(z;s) \= \begin{pmatrix} J_{z-\frac12}(2s) \\ J_{z+\frac12}(2s) \\ \end{pmatrix} \= \frac{s^{z-\frac12}}{\Gamma(z+\frac12)} \, u(z;s) \,.$$
Here $J_\nu(y)$ denotes the standard Bessel function \cite{Watson} and $j_a(X)$ a modified Bessel function:
\beq
j_a (X) \:= \sum_{n\geq 0}  \frac{(-X)^n}{n!^2 \, \binom{n+a-1/2}{n}} \,, \qquad 
J_\nu(y)\:=  \frac{(y/2)^\nu}{\Gamma(\nu+1)} \, j_{\nu+\frac12}(y^2/4) \,. 
\eeq
\end{prop}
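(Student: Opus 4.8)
The plan is to establish the two factorizations in \eqref{RDonM2} by direct computation from the explicit formula \eqref{defBzs}, reducing everything to classical contiguity and product identities for Bessel functions. Since the second equality relating $u$ and $V$ is immediate from the definitions (the scalar prefactor $s^{z-1/2}/\Gamma(z+\tfrac12)$ in the first component converts $j_z(s^2)$ into $J_{z-1/2}(2s)$, and a matching computation handles the second component using $j_{z+1}$ and the identity $J_\nu(y)=\frac{(y/2)^\nu}{\Gamma(\nu+1)}j_{\nu+1/2}(y^2/4)$), the real content is the single matrix identity $B(z;s)=u(z)\,u(-z)^T$. The first step is therefore to unpack the claimed rank-one structure entrywise. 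Writing $u(z)=\bigl(a(z),\,b(z)\bigr)^T$ with $a(z)=j_z(s^2)$ and $b(z)=\frac{s}{z+1/2}\,j_{z+1}(s^2)$, the four entries of $u(z)u(-z)^T$ are $a(z)a(-z)$, $a(z)b(-z)$, $b(z)a(-z)$, $b(z)b(-z)$, and I must match these against the four entries of $B$ in \eqref{defBzs}, namely $\tfrac12(1+G)$, $\tfrac{2s}{1-2z}\widetilde G(z-1)$, $\tfrac{2s}{1+2z}\widetilde G(z)$, and $\tfrac12(1-G)$.

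The second step is to prove each of these four scalar identities. Here I would convert the $j_a$'s into $J_\nu$'s via the second factorization (equivalently, work directly with $V(z)V(-z)^T$, which differs from $u(z)u(-z)^T$ only by the overall scalar $\frac{\pi s}{\cos(\pi z)}$ coming from the reflection formula $\Gamma(z+\tfrac12)\Gamma(\tfrac12-z)=\pi/\cos(\pi z)$). The diagonal identity then asserts that $J_{z-1/2}(2s)J_{-z-1/2}(2s)$ equals a hypergeometric series proportional to $1+G(z;s)$, and similarly for the other diagonal entry. These are exactly products of two Bessel functions of opposite order, for which there is the classical Nielsen/Watson product formula expressing $J_\mu(y)J_\nu(y)$ as a ${}_1F_2$ (or ${}_2F_3$) hypergeometric series; specializing $\mu=z-\tfrac12$, $\nu=-z-\tfrac12$ collapses one of the upper-lower parameter pairs and produces precisely the ${}_1F_2$ defining $G$ in \eqref{Hyper1}. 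The off-diagonal entries are products $J_{z-1/2}J_{-z+1/2}$ and $J_{z+1/2}J_{-z-1/2}$, whose Bessel-product expansions similarly match the ${}_1F_2$ defining $\widetilde G$ in \eqref{Hyper2}, with the prefactors $\tfrac{1}{1\mp2z}$ arising from the shift in the Pochhammer denominators. The cleanest bookkeeping is to verify the power series in $s$ coefficient by coefficient: both sides are entire in $s$, so matching the Taylor coefficients in $s^{2m}$ (resp.\ $s^{2m+1}$ off-diagonal) reduces each identity to an elementary binomial/Gamma-function identity, and the $\binom{2m}{m}$ and Pochhammer factors in \eqref{Hyper1}--\eqref{Hyper2} are exactly what the Cauchy product of two Bessel series yields.

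The third and final step is a consistency check that ties the factorization back to the earlier results: once $B=u(z)u(-z)^T$ is rank one, one has $\det B=0$ automatically, which is consistent with Proposition \ref{ptopod} and Theorem \ref{thm05} and confirms we have the right normalization. I would also verify $\tr B(z;s)=a(z)a(-z)+b(z)b(-z)=1$ against \eqref{m1xp1}; this gives an independent sanity check and in fact follows from the $z\to-z$ symmetric combination of the two diagonal identities, since $G(z;s)$ contributes with opposite signs. The main obstacle I anticipate is locating and correctly normalizing the Bessel-product-to-hypergeometric identity: the classical formulas for $J_\mu J_\nu$ are stated with several conventions for the argument and for the placement of $\Gamma$-factors, and the degenerate case $\mu+\nu=-1$ (which is what makes the general ${}_2F_3$ collapse to a ${}_1F_2$) must be handled carefully to land on exactly the series in \eqref{Hyper1}, \eqref{Hyper2} rather than a superficially different but equal expression. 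If the direct product-formula route proves delicate, an alternative is to bypass it entirely and instead verify that the rank-one matrix $u(z)u(-z)^T$ satisfies the topological difference equation \eqref{m1xp} together with the normalization \eqref{m1xp1}; by the uniqueness in Proposition \ref{ptopod} (applied to the asymptotic expansion via Theorem \ref{thm05}) this would force $u(z)u(-z)^T=B(z;s)$ without ever invoking an external Bessel identity, turning the contiguity relations $j_{a}\to j_{a\pm1}$ into the two components of the matrix recursion.
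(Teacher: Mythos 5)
Your proposal is correct and takes essentially the same route as the paper: you reduce the rank-one identity entrywise to the three scalar identities \eqref{GBB1}--\eqref{GBB3} expressing $\frac{1\pm G(z;s)}{2}$ and $\frac{s}{z+\frac12}\widetilde G(z;s)$ as products of Bessel functions of opposite order (with the two factorizations in \eqref{RDonM2} equivalent via the reflection formula $\Gamma\bigl(\tfrac12+z\bigr)\Gamma\bigl(\tfrac12-z\bigr)=\pi/\cos(\pi z)$, exactly as you note), and you verify each identity by matching the Cauchy product of the two Bessel series coefficient-by-coefficient in $s$ --- precisely the computation the paper carries out for \eqref{GBB3} and dismisses as ``similar'' for \eqref{GBB1}, \eqref{GBB2}, with Watson's product formula (used by the paper only for Proposition \ref{dhyper}) as an equivalent packaging. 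One caveat on your fallback route: the uniqueness in Proposition \ref{ptopod} is for \emph{formal} power series, so checking that $u(z)\,u(-z)^T$ satisfies \eqref{m1xp}--\eqref{m1xp1} and expanding it asymptotically would only identify its expansion with that of $B(z;s)$, not the analytic functions themselves, and closing that route would require an extra argument showing that a solution of the difference relation with vanishing asymptotic expansion along the reals is identically zero.
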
 

Now define two  analytic functions $D(a,b;s)$ and $D^*(a,b;s)$ by
\beq \begin{aligned}\label{defDabX}
D(a,b;s) & \= \frac{u(-a,s)^T \, u(b,s)}{a-b} \=  \frac{j_{-a}(X) \, j_{b}(X) \+ \frac{X}{(\frac12-a)(\frac12+b)} j_{1-a}(X) \, j_{1+b}(X) }{a-b}\, ,  \\
D^*(a,b;s) & \= \frac{V(-a,s)^T \, V(b,s)}{a-b} \= \frac{J_{-a-\frac12}(2s) \, J_{b-\frac12}(2s) \+ J_{\frac12-a}(2s) \, J_{\frac12+b}(2s) }{a-b}  \\
& \= \frac{s^{b-a-1}}{\Gamma(\frac12-a) \, \Gamma(\frac12+b)} \, D(a,b;s) \,, 
\end{aligned} \eeq 
where $X=s^2$. Then from Proposition \ref{BBB} and the fact that $\, \tr (A_1B_1\cdots A_k B_k) = \tr (B_1 A_2 \cdots B_k A_1)$, we find that 
the trace in \eqref{npoint00} factorizes  as a product of the one-variable functions $D$ or~$D^*$, and we obtain:

\begin{thm} \label{newformulaana} 
The analytic functions $H_k,\,k\geq 2$ have the expressions
\begin{align}
H_k(z_1, \dots, z_k;s)
& \= - \sum_{\sigma\in S_k/C_k} 
\prod_{i=1}^k D\bigl(z_{\sigma(i)}, z_{\sigma(i+1)} ; s \bigr)  \,-\, \frac{\delta_{k,2}}{(z_1-z_2)^2} 
\end{align}
or alternatively
\beq
H_k(z_1, \dots, z_k;s) \=  - \frac{\pi^k \, s^k}{\prod_{i=1}^k \cos (\pi z_i) } \sum_{\sigma\in S_k/C_k} 
\prod_{i=1}^k D^*\bigl(z_{\sigma(i)}, z_{\sigma(i+1)} ; s \bigr)  \,-\, \frac{\delta_{k,2}}{(z_1-z_2)^2} \,.
\eeq
with $D(a,b;s)$ and $D^*(a,b;s)$ as in \eqref{defDabX}.
\end{thm}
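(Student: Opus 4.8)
The plan is to reduce the entire statement to the rank-one factorization of $B$ provided by Proposition~\ref{BBB}, together with the cyclicity of the trace. The central observation is that once each matrix $B(z_i;s)$ is written as a column times a row, the trace of a product of such rank-one matrices collapses into a product of scalars, and these scalars are exactly the one-variable functions $D$ (resp.~$D^*$) up to the factors $z_{\sigma(i)}-z_{\sigma(i+1)}$ that will cancel the denominators in~\eqref{npointholo}.

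Concretely, I would fix a coset $\sigma\in S_k/C_k$ and substitute $B(z;s)=u(z)\,u(-z)^T$ into the corresponding summand of~\eqref{npointholo}. Writing the column $A_i:=u(z_{\sigma(i)})$ and the row $B_i:=u(-z_{\sigma(i)})^T$, the trace becomes $\tr[A_1B_1A_2B_2\cdots A_kB_k]$, and the cyclic rearrangement $\tr(A_1B_1\cdots A_kB_k)=\tr(B_1A_2\cdots B_kA_1)$ quoted just before the theorem regroups it as the product of the scalars $B_iA_{i+1}=u(-z_{\sigma(i)})^T u(z_{\sigma(i+1)})$, indices read cyclically with $z_{\sigma(k+1)}=z_{\sigma(1)}$. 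By the definition of $D$ in~\eqref{defDabX}, each such scalar equals $(z_{\sigma(i)}-z_{\sigma(i+1)})\,D(z_{\sigma(i)},z_{\sigma(i+1)};s)$, so the trace factors as $\prod_{i=1}^k(z_{\sigma(i)}-z_{\sigma(i+1)})\cdot\prod_{i=1}^kD(z_{\sigma(i)},z_{\sigma(i+1)};s)$. The first product is precisely the denominator in~\eqref{npointholo} and cancels, leaving $\prod_{i=1}^kD(z_{\sigma(i)},z_{\sigma(i+1)};s)$; summing over $S_k/C_k$ and carrying the overall minus sign gives the first formula for $k\geq3$.

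For the $k=2$ case I would start from~\eqref{twopointholo} instead. Since $S_2/C_2$ is trivial, the same collapse yields $-D(z_1,z_2;s)\,D(z_2,z_1;s)=\tr[B(z_1;s)B(z_2;s)]/(z_1-z_2)^2$, and the isolated constant $-1$ in the numerator of~\eqref{twopointholo} is accounted for by the separate term $-1/(z_1-z_2)^2$, which is exactly $-\delta_{k,2}/(z_1-z_2)^2$. The second, Bessel-function, formula is obtained in the same way from the alternative factorization $B(z;s)=\tfrac{\pi s}{\cos(\pi z)}V(z)V(-z)^T$: the cyclic collapse now produces the scalars $V(-z_{\sigma(i)})^T V(z_{\sigma(i+1)})=(z_{\sigma(i)}-z_{\sigma(i+1)})D^*(z_{\sigma(i)},z_{\sigma(i+1)};s)$ together with one factor $\pi s/\cos(\pi z_{\sigma(i)})$ per matrix; since $\prod_i \pi s/\cos(\pi z_{\sigma(i)})=\pi^ks^k/\prod_i\cos(\pi z_i)$ does not depend on $\sigma$, it pulls out of the cyclic sum and yields the stated prefactor.

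In truth the whole derivation is mechanical once Proposition~\ref{BBB} is available, so I do not expect a genuine obstacle: the real content sits in that factorization, which I am assuming. The only point requiring a little care is that each summand $\prod_iD(z_{\sigma(i)},z_{\sigma(i+1)};s)$ carries spurious poles along the diagonals $z_i=z_j$, because $u(-z)^Tu(z)=\tr B(z;s)=1\neq0$, so the numerator of $D(a,b;s)$ does not vanish as $a\to b$. I would therefore establish the identity first for generic pairwise-distinct, non-half-integer arguments, where every quantity is defined; that these poles cancel in the full cyclic sum, so that $H_k$ is in fact regular on the diagonals, is the independent content of Proposition~\ref{nopoleH} and does not need to be reproved here.
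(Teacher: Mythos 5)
Your proposal is correct and follows essentially the same route as the paper: substitute the rank-one factorizations $B(z;s)=u(z)\,u(-z)^T=\frac{\pi s}{\cos(\pi z)}V(z)V(-z)^T$ from Proposition~\ref{BBB} into \eqref{twopointholo}--\eqref{npointholo}, use cyclicity of the trace to collapse each summand into the product of scalars $u(-z_{\sigma(i)})^T u(z_{\sigma(i+1)})=(z_{\sigma(i)}-z_{\sigma(i+1)})D(z_{\sigma(i)},z_{\sigma(i+1)};s)$, and cancel the denominators. Your extra remarks --- the separate bookkeeping of the $-1$ term for $k=2$, the $\sigma$-independence of the cosine prefactor, and deferring the diagonal regularity to Proposition~\ref{nopoleH} --- are all consistent with (and slightly more careful than) the paper's two-line argument.
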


\begin{example} The function $H_2$ has the expression:
\begin{align}
H_2(z_1,z_2;s)  \= - \, D\bigl(z_1,z_2;s\bigr) \, D\bigl(z_2,z_1;s\bigr) \,-\, \frac1{(z_1-z_2)^2} \,.
\end{align}
\end{example}

Next, we note that, although the original definition \eqref{defDabX} would give a complicated formula for $D(a,b;s)$ as a double infinite sum, 
in fact it simplifies to a single infinite sum (hypergeometric series):
\begin{prop} \label{dhyper}
The function $D(a,b;s)$ has the following explicit expression
\beq\label{Dhgm}
D(a,b;s) \= \sum_{n=0}^\infty \frac{(a-b-2n+1)_{n-1}}{n! \, (-a+\tfrac12)_n \, (b+\tfrac12)_n} s^{2n} \,, 
\eeq
where in the first term $(a-b+1)_{-1}:=1/(a-b)$. Equivalently, 
$$
D(a,b;s)\= \frac1{a-b} ~  {}_2 F_3\Bigl(\frac{b-a}2, \frac{b-a+1}2; \, \frac12 -a \,, \, \frac12 + b \,, b-a+1; \, -4 s^2\Bigr) \,.
$$
\end{prop}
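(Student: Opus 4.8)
The plan is to prove Proposition~\ref{dhyper} by starting from the double-series definition of $D(a,b;s)$ in \eqref{defDabX} and collapsing the inner convolution into a single hypergeometric sum. Recall that
\[
D(a,b;s) \= \frac{j_{-a}(X)\,j_b(X) \+ \frac{X}{(\frac12-a)(\frac12+b)}\,j_{1-a}(X)\,j_{1+b}(X)}{a-b}\,,\qquad X=s^2,
\]
so the first step is to substitute the defining series $j_c(X)=\sum_{n\geq0}\frac{(-X)^n}{n!^2\binom{n+c-1/2}{n}}$, rewrite each binomial as a ratio of Pochhammer symbols via $\binom{n+c-1/2}{n}=\frac{(c+\frac12)_n}{n!}$, and collect the coefficient of $s^{2n}$. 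This reduces each of the two products $j_{-a}(X)j_b(X)$ and $j_{1-a}(X)j_{1+b}(X)$ to a terminating inner sum over $\ell$ with Pochhammer denominators $(\frac12-a)_\ell$, $(b+\frac12)_\ell$ and their shifts.

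Next I would handle the internal finite sums. After extracting a common factor of $s^{2n}/\bigl(n!(-a+\frac12)_n(b+\frac12)_n\bigr)$ — the exact prefactor appearing in the claimed formula \eqref{Dhgm} — each coefficient of $s^{2n}$ reduces to a terminating hypergeometric sum in $\ell$. The goal is to show that, after combining the two contributions (with the second carrying the $X/((\frac12-a)(\frac12+b))$ weight that shifts $n\mapsto n-1$), the numerator simplifies to the single Pochhammer expression $(a-b-2n+1)_{n-1}$, together with the factor $1/(a-b)$ that divides the whole thing out. Concretely, the combination should collapse to a $\,{}_2F_1$-type evaluation (a Chu--Vandermonde or Pfaff--Saalsch\"utz identity), which is what produces the closed product $(a-b-2n+1)_{n-1}$ rather than a residual sum.

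To organize the computation cleanly I would instead work backward from the second, $\,{}_2F_3$ form of the answer, since there the structure is transparent: the parameters $\frac{b-a}2,\frac{b-a+1}2$ in the numerator and $b-a+1$ in the denominator are exactly the pair that, under the quadratic transformation relating $\,{}_2F_3$ evaluated at $-4s^2$ to a product of two $\,{}_1F_2$'s (equivalently, to a product of two $j$-functions) at argument $s^2$, reproduce the bilinear combination in $D$. This is a standard Bessel/Clausen-type product formula, and verifying it amounts to matching the coefficient of $s^{2n}$ on both sides. The equivalence of the $(a-b-2n+1)_{n-1}$ form and the $\,{}_2F_3$ form is then a routine rewriting: expanding the $\,{}_2F_3$, separating the factor $b-a+1$, and using $(a-b-2n+1)_{n-1}=(-1)^{n-1}(b-a+1)_{\dots}/(\cdots)$ to identify the coefficients term by term.

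The main obstacle will be the middle step — showing that the doubly-indexed convolution of the two $j$-products telescopes to the single product $(a-b-2n+1)_{n-1}$. The delicate points are tracking the index shift between the two terms (the second product contributes to $s^{2n}$ via its $s^{2(n-1)}$ coefficient) and recognizing the correct terminating hypergeometric identity that evaluates the inner sum in closed form; the symmetry $a\leftrightarrow -b$ visible in the structure of $D$ should guide the choice of identity and provide a useful consistency check. Once that finite-sum evaluation is pinned down, assembling the prefactors and dividing by $a-b$ to obtain both stated forms is mechanical.
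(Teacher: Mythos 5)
Your proposal is correct and takes essentially the same approach as the paper, whose entire proof is a citation of the product formula for Bessel functions on p.~147 of \cite{Watson} --- exactly the ``standard Bessel/Clausen-type product formula'' of your second route (note that $j_a$ is a ${}_0F_1$, not a ${}_1F_2$, so the relevant identity is ${}_0F_1\times{}_0F_1={}_2F_3$, and the inner-sum evaluation in your first route is Chu--Vandermonde, not Pfaff--Saalsch\"utz; your first route is just the standard proof of that product formula). Applying it to each of the two Bessel products in \eqref{defDabX} and combining the contributions at indices $n$ and $n-1$ produces a factor $b-a$ cancelling the $a-b$ in the denominator up to sign, yielding \eqref{Dhgm} exactly as you outline.
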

\begin{proof} 
This follows from a product formula for Bessel functions given on p.~147 of \cite{Watson}.
\end{proof}

\subsection{One-point functions} \label{sec1.4}
In the above we looked at $k$-point functions with $k\geq 2$. We now consider the case~$k=1$. 
Define two meromorphic functions $H_1(z,s)$ and $H_1^*(z,s)$ as modified limiting functions of $D(a,b;s)$ and $D^*(a,b;s)$, namely  
\begin{align}
& H_1(z,s) \= - \lim_{b\to z} \bigl(D(z,b;s) \, -\, \tfrac1{z-b}\bigr) \,, \\
& H_1^*(z,s)\= -\frac{\pi \,s }{\cos(\pi z)} \lim_{b\to z} \Bigl(D^*(z,b;s) \, -\, \tfrac{\cos (\pi \, z)}{\pi\,s\,(z-b)}\Bigr) \,. 
\end{align}

From the definition it follows immediately that the functions $H_1$ and $H_1^*$ are related by
\beq \label{H1H1}
H_1^*(z; s) \= H_1(z; s) \+  \log s \, -\, \psi\Bigl(\frac12 + z \Bigr) \,,
\eeq
where $\psi$ denotes the digamma function.
Using equations \eqref{defDabX} and Proposition \ref{dhyper} along with l'Hospital's rule we get the following explicit expressions:
\begin{align}
& H_1(z;s) \= \sum_{n\geq 1} \frac{(2n-1)! \, s^{2n}}{n!^2 (z-n+\frac12)_{2n}} \,,\label{H1sum}\\
& H_1^*(z;s) \=   \frac{\pi \,s }{\cos(\pi z)} \, \biggl(J_{-\frac12-z}(2s) \, \frac{\p J_{-\frac12+z}(2s)}{\p z} \+ J_{\frac12-z}(2s) \, \frac{\p J_{\frac12+z}(2s)}{\p z} \biggr) \,. \label{H1starJ}
\end{align}
From \eqref{H1sum} one observes that $s \frac{\p H_1(z;s)}{\p s} \= G(z;s) \,- \, 1$. 

\begin{thm}\label{thm2}
The formal series \eqref{mainsum} with $k=1$ has the expression 
\beq\label{F1H1}
F_1(\lambda;x;\e,1) \;=\;  \frac1\e \Bigl(H_1^*\Bigl(\frac{\lambda-x}\e; \frac1\e\Bigr) \+ \log \lambda \,-\, \frac{x}\lambda \Bigr) \,,
\eeq
where the right-hand-side is understood as its asymptotic expansion as $\lambda\rightarrow \infty$.
\end{thm}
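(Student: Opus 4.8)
The plan is to derive the $k=1$ formula as a limiting case of the two-point formula \eqref{twopoint}, exploiting the structure already established for $k\geq 2$. Recall that Theorem~\ref{thm05} gives the asymptotic identification $B(z;s)\sim M(z;s)$, and Theorem~\ref{pbispec} together with \eqref{RDon} expresses $\R(\lambda;x;\e)=M\bigl(\frac{\lambda-x}\e;\frac1\e\bigr)$. Since the whole construction of $F_1$ rests on the free energy $\F$, the first thing I would do is relate $F_1$ to $F_2$ via the integrable-systems machinery: in the Toda/MR framework the one-point function is essentially an antiderivative (in $x$, equivalently a derivative with respect to the resolvent parameter) of the two-point function restricted to the diagonal. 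Concretely, one knows from \cite{DY1,DY2} that $\p_x \bll\tau_{i}(\omega)\brr$ can be expressed through products of the wave vectors, so I expect $\e\,\p_x F_1(\lambda;x;\e,1)$ to coincide with a residue or diagonal piece of $F_2(\lambda,\lambda;x;\e,1)$, matching the definition of $H_1$ as the regularized diagonal limit of $D(a,b;s)$.

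The key steps, in order, would be the following. First, I would establish a differential relation expressing $F_1$ in terms of the matrix resolvent $\R$, analogous to \eqref{twopoint}: the natural candidate, coming from the MR approach, is that $\e\,\p_\lambda$ or $\e\,\p_x$ of $F_1$ equals (up to explicit elementary corrections) the trace of a single copy of $\R$, i.e.\ a diagonal specialization of the formula producing $H_1$. Second, I would use Proposition~\ref{BBB}, which gives the factorization $B(z;s)=u(z)u(-z)^T$, to identify the regularized diagonal limit; the subtraction of $\tfrac1{z-b}$ in the definition of $H_1$ precisely removes the pole in $D(a,b;s)=\tfrac{u(-a)^Tu(b)}{a-b}$, so $H_1(z;s)=-\lim_{b\to z}\bigl(D(z,b;s)-\tfrac1{z-b}\bigr)$ is exactly the finite part one obtains by l'Hospital, yielding the explicit sum \eqref{H1sum}. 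Third, I would invoke the conversion \eqref{H1H1} between $H_1$ and $H_1^*$, together with the Bessel-function expression \eqref{H1starJ}, and track the elementary terms $\log\lambda-\tfrac{x}\lambda$ in \eqref{F1H1}, which should arise from the $n=0$ (logarithmic) contributions and from the shift $z=\tfrac{\lambda-x}\e$ in the rescaling \eqref{scaling2}.

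The decisive step is matching the explicit correction terms $\log\lambda-\tfrac{x}\lambda$ and the overall prefactor $\tfrac1\e$, since these are not visible at the level of the purely formal-series identity and must be pinned down from the genus-$0$, degree-$0$ and genus-$0$, degree-$1$ correlators. The cleanest route is to verify \eqref{F1H1} order by order in $\lambda^{-1}$: the coefficient of $\lambda^{-(i+2)}$ on the left is $\e\,(i+1)!\,\bll\tau_i(\omega)\brr(x;\e,1)$ by definition \eqref{mainsum}, while on the right it is produced by expanding $\tfrac1\e H_1^*\bigl(\tfrac{\lambda-x}\e;\tfrac1\e\bigr)$ using \eqref{H1sum}–\eqref{H1H1} and re-expanding in powers of $\lambda^{-1}$ about $\lambda=\infty$. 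Since $H_1^*$ is, by \eqref{H1H1} and \eqref{H1starJ}, built from the same hypergeometric data as $B(z;s)$, and since $B\sim M$ with $M$ the resolvent entry, the two expansions agree to all orders provided the boundary (one-point, low-genus) normalizations are correct; checking those normalizations against the dilaton/string equations for $\mathbb{P}^1$ is where the real work lies.

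\medskip

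\emph{Anticipated main obstacle.} The principal difficulty is that, unlike the $k\geq2$ case where \eqref{twopoint}–\eqref{npoint} are homogeneous trace expressions with no additive corrections, the one-point function genuinely requires the additive terms $\log\lambda$ and $-x/\lambda$ and a regularization of a diagonal pole. Justifying these corrections rigorously — rather than heuristically — means controlling the constant of integration when passing from a $\p_x$ (or $\p_\lambda$) relation back to $F_1$ itself, and confirming that the regularized diagonal limit defining $H_1$ reproduces exactly the right-hand side of \eqref{H1sum} with the stated coefficients. I expect this to reduce to a careful application of l'Hospital's rule to the Bessel product in Proposition~\ref{BBB} combined with a low-order consistency check against known $\mathbb{P}^1$ correlators, but it is the step most likely to conceal a subtle normalization.
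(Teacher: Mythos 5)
Your outline assembles the right raw materials (the string equation, a single-resolvent identity, the regularized diagonal limit of $D$, the digamma bookkeeping in \eqref{H1H1}), but the central derivation is missing, and the substitute you propose for it would fail. The paper's proof runs as follows: differentiate the string equation \eqref{stringF} with respect to $t_j$ and set $\bt={\bf 0}$ to obtain $\llll\tau_{j-1}(\omega)\rrrr(x;\e,1)=\llll\tau_0(1)\tau_j(\omega)\rrrr(x;\e,1)$; then the matrix-resolvent identity \eqref{taun2} expresses the generating series of $\frac{e^{\e\p_x}-1}{\e\p_x}\llll\tau_0(1)\tau_i(\omega)\rrrr$ through the single entry $\gamma_{n+1}(\lambda)$ of \emph{one} resolvent (not a trace --- note $\tr\,\R\equiv1$), which yields a closed formula for $-\p_\lambda F_1$; finally one inverts the difference operator $\frac{e^{\e\p_x}-1}{\e\p_x}$, and this inversion is exactly where the Bernoulli polynomials of \eqref{onepoint} enter, whose comparison with the asymptotics of the digamma term in \eqref{H1H1} accounts for the corrections $\log\lambda-\frac{x}{\lambda}$ and the prefactor $\frac1\e$ (the constant of integration in $\lambda$ is a non-issue, since $F_1$ is by definition a power series starting at $\lambda^{-2}$). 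Your replacement step --- reading $\e\,\p_x F_1$ off a ``residue or diagonal piece of $F_2(\lambda,\lambda;x;\e,1)$'' --- conflates two different insertions: the $\mu^{-2}$ residue of $F_2(\lambda,\mu)$ generates $\tau_0(\omega)$-insertions, i.e.\ $\p_{t_0}$-derivatives, whereas $\p_x F_1$ generates $\tau_0(1)$-insertions; the string equation is precisely the bridge between the two, and it cannot be skipped. Similarly, the diagonal limit of $D$ defining $H_1$ regularizes one \emph{factor} of $H_2$ (recall $H_2=-D(z_1,z_2;s)\,D(z_2,z_1;s)-1/(z_1-z_2)^2$), not a diagonal specialization of $F_2$ itself.

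Second, your closing strategy --- verify \eqref{F1H1} order by order in $\lambda^{-1}$ ``provided the boundary normalizations are correct'' --- is not a proof: the coefficient of $\lambda^{-(i+2)}$ on the left is $\e\,(i+1)!\,\llll\tau_i(\omega)\rrrr(x;\e,1)$, which contains contributions from \emph{all} genera and degrees, so checking the genus-$0$, degree-$0$ and degree-$1$ correlators pins down nothing beyond the first few orders. To turn an order-by-order matching into a rigorous argument you would need a structural uniqueness statement (for instance, that both sides are determined by the topological difference equation \eqref{m1xp} together with a normalization, in the spirit of Proposition \ref{ptopod}), and your outline does not set this up; the paper avoids the issue entirely via the direct computation described above. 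The portions of your plan concerning l'Hospital's rule for \eqref{H1sum} and the conversion \eqref{H1H1} between $H_1$ and $H_1^*$ are correct, but they are definitional material already contained in Section \ref{sec1.4}, not the substantive content of the theorem.
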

Alternatively, using \eqref{H1sum} and \eqref{H1H1}, we can write \eqref{F1H1} explicitly as
\begin{align}
& F_1(\lambda;x;\e,1) \= 
\frac1\e \sum_{j\geq 2} \frac{x^j}{j \, \lambda^j}
\+ \sum_{j\geq 2} \, \frac{\e^j}{j\,(\lambda-x)^j} \sum_{i=0}^\infty  \frac{\e^{-1-2i}}{i!^2} 
\sum_{\ell=0}^{2i} (-1)^{\ell} \, \binom{2i}{l}  \, B_j\bigl(i - \ell +\tfrac12 \bigr) \,,   \label{onepointaltern} \end{align}
which can also be written as a pure power series in $\lambda^{-1}$ as
\begin{align}
& F_1(\lambda;x;\e,1) \=  \sum_{j\geq 2} \frac{\e^j}{j\,\lambda^{j}} \sum_{i=0}^\infty  \frac{\e^{-1-2i}}{i!^2} \sum_{\ell=0}^{2i} (-1)^{\ell}  \, \binom{2i}{\ell} \, 
B_j \bigl(  \tfrac{x}\e + i -\ell +\tfrac12 \bigr) \,. \label{onepoint}
\end{align}
Here, $B_j(u)$ denotes the $j^{\rm th}$ Bernoulli polynomial  
(the unique polynomial solution to $\int_v^{v+1}\!B_j(u)du=v^j$).

Note that the internal sum in \eqref{onepointaltern} or \eqref{onepoint} is simply the $(2i)^{\rm th}$ backward-difference of the polynomial 
$B_j\bigl(i +\tfrac12 \bigr)$ or $B_j \bigl(  \tfrac{x}\e +i +\tfrac12 \bigr)$, 
respectively, and since the $n^{\rm th}$ difference of a polynomial of degree $<n$ vanishes, we can replace the sum~$\sum_{i=0}^\infty$ by 
$\sum_{i=0}^{[j/2]}$ in both equations. Also, since $\Delta_{-1}^{2i} \bigl(B_j\bigl(\frac{x}\e+i +\tfrac12 \bigr)\bigr) = \Delta_{-1}^{2i-1} \bigl(\Delta_{-1} \bigl(B_j\bigl(\frac{x}\e+i +\tfrac12 \bigr) \bigr)\bigr) =  j \, \Delta_{-1}^{2i-1}\bigl(\bigl(\frac{x}\e+i - \tfrac12\bigr)^{j-1}\bigr)$, we 
have the following more elementary expressions
\begin{align}
& F_1(\lambda;x;\e,1) \= 
 \sum_{g\geq 1} \, \frac{\e^{2g-1}}{(\lambda-x)^{2g}}  \frac{\bigl(1-2^{2g-1}\bigr) \, B_{2g}}{2^{2g} \, g }   \+ \frac1\e \sum_{j\geq 2} \frac{x^j}{j \, \lambda^j} \nn\\
&\qquad \qquad \qquad \quad \+ \sum_{j\geq 2} \, \frac{\e^j}{(\lambda-x)^j} \sum_{i=1}^{[j/2]}  \frac{\e^{-1-2i}}{i!^2} 
\sum_{\ell=0}^{2i-1} (-1)^{\ell} \, \binom{2i-1}{\ell}  \, \bigl(i - \ell - \tfrac12 \bigr)^{j-1} \,,   \nn\\
& F_1(\lambda;x;\e,1) \= \sum_{j\geq 2} \frac{\e^{j-1}}{j\,\lambda^{j}}  \, 
B_j \bigl(  \tfrac{x}\e +\tfrac12 \bigr)  \+  \sum_{j\geq 2} \frac{\e^j}{\lambda^{j}} \sum_{i=1}^{[j/2]}  \frac{\e^{-1-2i}}{i!^2} \sum_{\ell=0}^{2i-1} (-1)^{\ell}  \, \binom{2i-1}{\ell} \, 
 \bigl(  \tfrac{x}\e + i -\ell - \tfrac12 \bigr)^{j-1} \,, \nn
\end{align}
where the first sum in each case corresponds to the digamma term in \eqref{H1H1}, and $B_j:=B_j(0)$ is the $j^{\rm jh}$ Bernoulli number.

\subsection{Four asymptotics} 
We already know that $F_k(\lambda_1,\dots,\lambda_k; 0;\e,1)$ contains all GW invariants of $\mathbb{P}^1$ in the stationary sector for all genera and all degrees. 
(The dependence on $q$ can be recovered by rescalings). This suggests the possibility of studying the $\e\rightarrow \infty$ or 
$q\rightarrow \infty$ limit using the analytic $k$-point functions. 
Namely, we study the functions 
$H_k\bigl(\frac{\lambda_1}\e, \dots, \frac{\lambda_k}\e; \frac{q^{1/2}}\e\bigr)$, $k\geq 1$
and their four asymptotic behaviours: $\e\rightarrow 0$,  $\e \rightarrow \infty$, $q\rightarrow 0$, $q\rightarrow \infty$. 

For any fixed $k\geq 1$, introduce the grading operator 
${\rm gr} := \e \frac{\p}{\p \e} \+ 2q\frac{\p}{\p q} \+ \sum_{i=1}^k \lambda_i \frac{\p}{\p \lambda_i}$.
Obviously, 
$~ {\rm gr} \, H_k \bigl(\frac{\lambda_1}\e, \dots, \frac{\lambda_k}\e; \frac{q^{1/2}}\e\bigr)  = 0$.
We begin with the $\e\rightarrow 0$ limit. The condition $\lambda_i/2\sqrt{q}>1$ in the following theorem may look strange at first. It comes from the fact that 
the asymptotics of $J_\nu(\nu x)$ as $\nu\to \infty$ with $x>0$ fixed are different according as $x<1$, $x=1$, or $x>1$. (See~\cite{Watson}, p.~225.)
\begin{subtheorem}{thm} \label{thmfourlimits}
\begin{thm}\label{e0limit}
\textit{Fix $k\geq 1$. For $q,\e,\lambda_1,\dots,\lambda_k$ satisfying $0<\frac{2\sqrt{q}}{\lambda_i}<1$, $\frac{\lambda_i}\e>0$, $i=1,\dots k$, as~$\e\to 0$ (with fixed $\lambda_1,\dots,\lambda_k,q$), we have expansions of the form
\begin{align}
& H_k\Bigl(\frac{\lambda_1}\e, \dots, \frac{\lambda_k}\e; \frac{q^{1/2}}\e\Bigr)  \; \sim \; \sum_{g\geq 0} \e^{2g-2+2k} H_{k}^{[g]}(\lambda_1,\dots,\lambda_k;q)  
 \qquad (k\geq 2)\,, \label{asympHe}\\
& H_1^*\Bigl(\frac{\lambda}\e; \frac{q^{1/2}}\e\Bigr) \; \sim \;  \log q^{1/2} - \log \lambda \+   \sum_{g\geq 0} \e^{2g} H_1^{*,[g]}(\lambda;q) \,,
\end{align}
where $H_{k}^{[g]}$ for $k\geq 2$ are rational functions of $q,\lambda_1,\dots,\lambda_k$ satisfying ${\rm gr} \, H_{k}^{[g]} \= (2-2g-2k)  \,H_{k}^{[g]}$; 
while for $k=1$ we have $H_1^{*,[0]} = \log \frac{2\lambda}{\lambda+ (\lambda^2-4q)^{\frac12}}$ and $H_1^{*,[g]}$~$(g\geq1)$ has the form
\beq
\lambda^{2g} \, H_1^{*,[g]}(\lambda;q) \= \frac{P_g\bigl(\frac q {\lambda^2} \bigr)}{ \bigl(1-\frac{4q}{\lambda^2}\bigr)^{\frac{6g-1}2}} \,,\qquad 
P_g(0) = 2 \, \frac{(2g-1)! \,  (1-2^{2g-1}) \, B_{2g}}{4^g\, (2g)!}
\eeq 
with $P_g(x)$ being a polynomial with rational coefficients of degree $2g-1$.
Moreover, the sum 
$$\sum_{g\geq 0} \e^{2g-2+k} H_{k}^{[g]}(\lambda_1,\dots,\lambda_k;q) ~ (k\geq2) \quad 
\mbox{or}~\sum_{g\geq 0} \e^{2g-1} H_{1}^{*,[g]}(\lambda;q) ~(k=1)$$  
coincides with $F_k(\lambda_1,\dots,\lambda_k; 0; \e,q)$ as a formal power series in $\lambda_1^{-1},\dots,\lambda_k^{-1},q$.
}
\end{thm}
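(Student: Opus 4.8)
The plan is to reduce the entire statement to the uniform large-order asymptotics of Bessel functions. By Proposition~\ref{BBB} and Theorem~\ref{newformulaana}, every $H_k$ (and, through \eqref{H1starJ}, the function $H_1^*$) is built from the Bessel functions $J_{z\pm\frac12}(2s)$. Setting $z_i=\lambda_i/\e$ and $s=q^{1/2}/\e$ turns each factor into $J_\nu(2s)$ with order $\nu=\pm\lambda_i/\e\pm\frac12$ and argument $2s=2q^{1/2}/\e$, so that order and argument both tend to infinity with fixed ratio $2s/\nu\to 2\sqrt q/\lambda_i$. The hypothesis $0<2\sqrt q/\lambda_i<1$ places us exactly in the exponentially-small Debye regime $J_\nu(\nu\,\mathrm{sech}\,\alpha_i)$, with $\cosh\alpha_i=\lambda_i/(2\sqrt q)$, hence $\tanh\alpha_i=\sqrt{\lambda_i^2-4q}/\lambda_i$ and $\coth\alpha_i=(1-4q/\lambda_i^2)^{-1/2}$; I would run the analysis with each $\lambda_i/\e$ at bounded distance from $\ZZ+\tfrac12$, which is where the asymptotic \eqref{asympHe} lives.

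First I would insert Debye's uniform expansion for each $J_\nu(2s)$ of large positive order, handling the negative orders in $u(-z_i)$ and $V(-z_i)$ through the reflection formula $J_{-\mu}=\cos(\pi\mu)J_\mu-\sin(\pi\mu)Y_\mu$. In this regime $J_\mu$ is exponentially small, of order $e^{\mu(\tanh\alpha-\alpha)}$, while $Y_\mu$ is exponentially large; the point to check is that in each entry of $B(z_i;s)=\frac{\pi s}{\cos(\pi z_i)}V(z_i)V(-z_i)^{T}$, and in each factor $D(z_{\sigma(i)},z_{\sigma(i+1)};s)$ of Theorem~\ref{newformulaana}, the $\cos(\pi z_i)$ prefactor recombines with the reflection coefficients so that the exponentially large and small pieces cancel up to bounded half-integer shifts, leaving a purely algebraic asymptotic series in $\e$ with coefficients rational in $\tanh\alpha_i$ and $\coth\alpha_i$. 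Feeding these into the trace/product formulas of Theorem~\ref{newformulaana} and collecting powers of $\e$ yields \eqref{asympHe}, with leading power $\e^{2k-2}$ and steps of $\e^2$; since the square-root factors $\sqrt{\lambda_i^2-4q}$ attached to each $\lambda_i$ enter the two $D$-factors meeting at $\lambda_i$ with powers adding to an integer, each $H_k^{[g]}$ $(k\ge2)$ comes out rational in $q,\lambda_1,\dots,\lambda_k$. The homogeneity $\mathrm{gr}\,H_k^{[g]}=(2-2g-2k)H_k^{[g]}$ is then immediate, because $\mathrm{gr}$ annihilates $H_k(\lambda_\bullet/\e;q^{1/2}/\e)$ while multiplying $\e^{2g-2+2k}$ by $2g-2+2k$, so matching order by order in $\e$ forces the stated eigenvalue on the $\e$-independent coefficient.

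For $k=1$ I would push the computation one order further. Using the cross-product identity $\frac{\pi s}{\cos(\pi z)}\bigl(J_{z-\frac12}(2s)J_{-z-\frac12}(2s)+J_{z+\frac12}(2s)J_{-z+\frac12}(2s)\bigr)=\tr B(z;s)=1$ together with the leading Debye estimate $\partial_z\log J_{z+c}(2s)\sim-\alpha$, the bracket in \eqref{H1starJ} collapses to $-\alpha\cdot\tr B=-\alpha$, so the leading term of $H_1^*$ is $-\alpha=-\mathrm{arccosh}(\lambda/2\sqrt q)$; adding $\log q^{1/2}-\log\lambda$ gives exactly $H_1^{*,[0]}=\log\frac{2\lambda}{\lambda+\sqrt{\lambda^2-4q}}$. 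For $g\ge1$ the Debye coefficients are polynomials in $\coth\alpha=(1-4q/\lambda^2)^{-1/2}$, and tracking their degree through the order-derivative in \eqref{H1starJ} produces the half-integer power $(1-4q/\lambda^2)^{-(6g-1)/2}$ and shows that $\lambda^{2g}H_1^{*,[g]}$ is a polynomial $P_g(q/\lambda^2)$ of degree $2g-1$ over that factor; the value $P_g(0)$ is pinned down by the $q\to 0$ limit, i.e. by comparison with the Bernoulli-number terms in the elementary form of $F_1$ displayed after Theorem~\ref{thm2}.

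Finally, for the identification with $F_k$, I would combine Theorem~\ref{thm05} with the resolvent formula of Theorem~\ref{prop1} (for $k\ge2$) and with Theorem~\ref{thm2} (for $k=1$): substituting $\mathcal R(\lambda;0;\e)=M(\lambda/\e;1/\e)$ and using the scaling \eqref{scaling2} shows that, for fixed $\e,q$, the $\lambda_i\to\infty$ asymptotics of $\e^{-k}H_k(\lambda_\bullet/\e;q^{1/2}/\e)$ reproduce $F_k(\lambda_1,\dots,\lambda_k;0;\e,q)$ as a power series in the $\lambda_i^{-1}$ and $q$. We then have two iterated asymptotic expansions of one and the same function $H_k(\lambda_\bullet/\e;q^{1/2}/\e)$: the $\e\to0$ expansion just constructed and this $\lambda_i\to\infty$ expansion. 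Because Debye's expansion is uniform for $\mathrm{sech}\,\alpha_i$ in compact subsets of $(0,1)$ — in particular it survives the limit $\lambda_i\to\infty$, where $\alpha_i\to\infty$ — both are iterations of a single joint (Poincar\'e) asymptotic expansion in the small parameters $\e$ and $\lambda_i^{-1}$, so their coefficient arrays coincide, and re-expanding the rational $H_k^{[g]}$ in $\lambda_i^{-1},q$ recovers $F_k$. I expect the main obstacle to be the negative-order Bessel analysis: proving to all orders that the exponentially large $Y_\mu$-contributions cancel against the exponentially small $J_\mu$-contributions to leave an algebraic series, and then securing the uniformity in $\lambda_i$ required to justify the interchange of expansions in this last step.
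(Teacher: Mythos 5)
Your proposal is correct and is, in substance, the paper's own route: the proof in Section~\ref{sectionlimits} rests on exactly your two ingredients --- the Debye large-order asymptotics of Bessel functions (Lemma~\ref{lemmae0bessel}) and the resulting entrywise $\e\to0$ expansions of the matrix $B$ (Lemma~\ref{lemmae0}) fed into the trace/product formulas --- and it handles $k=1$ through the same $\partial_z\log J_{z\pm\frac12}(2s)$ identity that underlies \eqref{H1starJ}, with the homogeneity of the coefficients read off from ${\rm gr}\,H_k=0$ exactly as you do. Two differences are worth recording. First, you expand through the factorization $B=\frac{\pi s}{\cos (\pi z)}V(z)V(-z)^T$ and the $D^*$-products, which forces you to treat negative orders via $J_{-\mu}=\cos(\pi\mu)J_\mu-\sin(\pi\mu)Y_\mu$ and to prove the all-orders cancellation of the exponentially large $Y_\mu$ pieces --- the obstacle you flag at the end. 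The paper sidesteps this by expanding the entries $G,\widetilde G$ of $B$ and noting that Lemma~\ref{lemmae0} can alternatively be proved directly from the convergent ${}_1F_2$ series \eqref{Hyper1}--\eqref{Hyper2}; your cancellation claim is nonetheless true (after reflection, the algebraic series is the $J_\nu Y_\nu$ cross term, whose exponents $\pm\nu(\alpha-\tanh\alpha)$ cancel, while the residual $J_\nu J_{\nu'}$ terms are ${\rm O}(e^{-c/\e})$ with bounded trigonometric prefactors away from $z\in\ZZ+\frac12$), so your route works, just with more labor. Second, for the identification with $F_k$ the paper does not invoke uniformity of the Debye expansion as $\lambda_i\to\infty$ --- which is delicate precisely where you need it, at the endpoint $2\sqrt q/\lambda_i\to0$ of the Debye regime --- but instead observes that the coefficients $a_m,c_m$ of Lemma~\ref{lemmae0} are algebraic functions whose $\sqrt q$-expansions converge for $2|\sqrt q|<|\lambda|$, so that the $\e\to0$ and $\lambda\to\infty$ expansions are identified as a single element of $\QQ[[\lambda^{-1},\e,\sqrt q]]$; this is the cleaner justification and you may wish to adopt it. Minor caveats: as the displayed $H_2^{[0]}$ and $H_2^{[1]}$ show, the coefficients are really rational in $\lambda_i$, $q$ and $\sqrt{\lambda_i^2-4q}$ (algebraic, as Lemma~\ref{lemmae0} states), so your quarter-power parity argument should be phrased accordingly; and $H_1^{*,[0]}$ is obtained from the leading term $-\alpha$ by \emph{subtracting}, not adding, $\log q^{1/2}-\log\lambda$.
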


\noindent The proof will be given in Section \ref{sectionlimits}. 
Theorem \ref{e0limit} tells that the $\e\rightarrow 0$ limit gives GW invariants of $\mathbb{P}^1$.
We remark that even the simple consequence of \eqref{asympHe} that $H_k\bigl(\frac{\lambda_1}\e, \dots, \frac{\lambda_k}\e; \frac{q^{1/2}}\e\bigr) = {\rm O}(\e^{2k-2})$ 
as $\e\rightarrow 0$ already seems to be quite non-trivial.  
The first few $H_k^{[g]}$ (or $H_1^{*,[g]}$)  are given by
\begin{align}
& H_1^{*,[1]} \= - \frac{\lambda \, (\lambda^2-16q)}{24 \, (\lambda^2-4q)^{\frac52}} \,,\qquad H_1^{*,[2]} \= \frac{\lambda \, (7 \lambda^6 -94 q\lambda^4+8256 q^2 \lambda^2 + 18432 q^3) }{960 \, (\lambda^2-4q)^{\frac{11}2}} \,, \nn\\
& H_2^{[0]} \= \frac{\lambda_1 \lambda_2-\sqrt{\lambda_1^2-4 q} \sqrt{\lambda_2^2-4 q}-4 q}{2 (\lambda_1-\lambda_2)^2 \sqrt{\lambda_1^2-4 q} \sqrt{\lambda_2^2-4 q}} \,, \nn\\
& H_2^{[1]} \= \frac{q}{4 \bigl(\lambda_1^2-4 q\bigr)^{\frac72} \bigl(\lambda_2^2-4 q\bigr)^{\frac72}} \Bigl(\lambda_1^3 \lambda_2^3 \bigl(\lambda_1^2+\lambda_2^2\bigr) +4 q\lambda_1 \lambda_2 \bigl(4 \lambda_1^4+5 \lambda_1^3 \lambda_2-\lambda_1^2 \lambda_2^2+5 \lambda_1 \lambda_2^3+4 \lambda_2^4\bigr) \nn\\ 
& \qquad\qquad\qquad -16 q^2 \lambda_1 \lambda_2 \bigl(10 \lambda_1^2+17 \lambda_1 \lambda_2+10 \lambda_2^2\bigr) +64 q^3 \bigl(2 \lambda_1^2+11 \lambda_1 \lambda_2+2 \lambda_2^2\bigr) +768 q^4 \Bigr) \,, \nn\\
& H_3^{[0]} \= q \frac{\lambda_1 \lambda_2 \lambda_3  + 4q (\lambda_1 + \lambda_2 + \lambda_3)}{ (\lambda_1^2-4 q)^{\frac32} (\lambda_2^2-4 q)^{\frac32} (\lambda_3^2-4 q)^{\frac32}} \,.\nn
\end{align}

The next is to look at the $\e\rightarrow \infty$ limit of $H_k\bigl(\frac{\lambda_1}\e, \dots, \frac{\lambda_k}\e; \frac{q^{1/2}}\e\bigr)$. 

\begin{thm}\label{einftylimit} 
\textit{Fix $k\geq 1$.  $\forall\, \lambda_1,\dots,\lambda_k,q\in\CC$, the following asymptotic holds true: as $\e\rightarrow\infty$,
\beq\label{asympHeinf}
H_k\Bigl(\frac{\lambda_1}\e, \dots, \frac{\lambda_k}\e; \frac{q^{1/2}}\e\Bigr) \;  \sim \; \sum_{g\geq 0} \e^{-2g} H_{k,[g]}(\lambda_1,\dots,\lambda_k;q) \,,
\eeq
where $H_{k,[g]}\in \QQ[\lambda_1,\dots,\lambda_k,q]$, and $~{\rm gr} \, H_{k,[g]} = 2g  \,H_{k,[g]}$. }
\end{thm}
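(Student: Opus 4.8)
The plan is to turn the limit $\e\to\infty$ into a local statement at the origin. Under the substitution $z_i=\lambda_i/\e$, $s=q^{1/2}/\e$ the condition $\e\to\infty$ becomes the joint limit $(z_1,\dots,z_k,s)\to(0,\dots,0)$ along the rays of fixed ratios $z_i/s=\lambda_i/q^{1/2}$, so I would simply Taylor‑expand $H_k$ at the origin and substitute. The one fact that makes this honest is that $H_k$ is genuinely \emph{analytic} at the origin: each factor $D(a,b;s)$ in the product formula of Theorem~\ref{newformulaana} (and the $\delta_{k,2}$ term when $k=2$) has a pole on the diagonal $a=b$, but Proposition~\ref{nopoleH} guarantees these cancel, so $H_k(z_1,\dots,z_k;s)$ extends holomorphically across all diagonals; since each $D$ is entire in $s$ (a convergent series in $s^2$ by \eqref{Dhgm}), $H_k$ is holomorphic in a full polydisc $\{|z_i|<\rho,\ |s|<\rho\}$ and hence has a convergent Taylor expansion $H_k=\sum_{\mathbf a,b}c_{\mathbf a,b}\,z_1^{a_1}\cdots z_k^{a_k}\,s^{b}$ there, with $c_{\mathbf a,b}\in\QQ$ (the Taylor coefficients of each $D$ at the origin are rational by \eqref{Dhgm} and combine rationally in $H_k$).

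Next I would record two parity symmetries of this series. First, every $D(a,b;s)$ depends on $s$ only through $s^2$ by \eqref{Dhgm}, so $H_k$ is even in $s$ and $c_{\mathbf a,b}=0$ unless $b$ is even. Second, \eqref{Dhgm} gives the identity $D(-a,-b;s)=D(b,a;s)$ (numerator $(a-b-2n+1)_{n-1}$ and the two denominator Pochhammers, as well as the $n=0$ term $1/(a-b)$, are all invariant under $(a,b)\mapsto(-b,-a)$). Feeding this into Theorem~\ref{newformulaana} yields $\prod_{i=1}^k D(-z_{\sigma(i)},-z_{\sigma(i+1)};s)=\prod_{i=1}^k D(z_{\sigma(i+1)},z_{\sigma(i)};s)$, which is the product taken around the \emph{reversed} cyclic order. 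Since cyclic reversal is an involutive bijection of $S_k/C_k$, summing over $\sigma$ leaves the symmetrized sum unchanged, and the $\delta_{k,2}/(z_1-z_2)^2$ term is manifestly invariant under $z_i\mapsto-z_i$. Hence $H_k(-z_1,\dots,-z_k;s)=H_k(z_1,\dots,z_k;s)$, so $c_{\mathbf a,b}=0$ unless $a_1+\cdots+a_k$ is even.

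Combining these, I would substitute $z_i=\lambda_i/\e$, $s=q^{1/2}/\e$; for $|\e|$ large the arguments lie in the polydisc of convergence, so the double series converges absolutely and may be reorganized by total degree $m=a_1+\cdots+a_k+b$:
\[
H_k\Bigl(\tfrac{\lambda_1}\e,\dots,\tfrac{\lambda_k}\e;\tfrac{q^{1/2}}\e\Bigr)
\= \sum_{m\ge 0}\e^{-m}\sum_{a_1+\cdots+a_k+b=m} c_{\mathbf a,b}\,\lambda_1^{a_1}\cdots\lambda_k^{a_k}\,q^{b/2}\,.
\]
By the first symmetry $b$ is even, so $q^{b/2}$ is an integral power of $q$; by the two symmetries together $m$ is even; and the inner sum is finite for each $m$. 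Writing $m=2g$, the coefficient $H_{k,[g]}$ is then a polynomial in $\QQ[\lambda_1,\dots,\lambda_k,q]$, giving \eqref{asympHeinf} (in fact as a convergent, a fortiori asymptotic, expansion). The grading is automatic: applying $\,{\rm gr}=\e\p_\e+2q\p_q+\sum_i\lambda_i\p_{\lambda_i}$ and using $\,{\rm gr}\,H_k\bigl(\tfrac{\lambda_i}\e;\tfrac{q^{1/2}}\e\bigr)=0$ forces $(2q\p_q+\sum_i\lambda_i\p_{\lambda_i})H_{k,[g]}=2g\,H_{k,[g]}$, i.e.\ $\,{\rm gr}\,H_{k,[g]}=2g\,H_{k,[g]}$, equivalently weighted‑homogeneity of degree $2g$ with $\deg\lambda_i=1$, $\deg q=2$. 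For $k=1$ I would run the identical argument with $H_1$ as in \eqref{H1sum}: the denominator $(z-n+\tfrac12)_{2n}$ is even in $z$ and nonvanishing at $z=0$, so $H_1$ is analytic at the origin and even in both $z$ and $s$.

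The main obstacle is not the algebra but ensuring the term‑by‑term expansion is legitimate: expanding each $D$‑factor separately produces spurious poles $\propto(\lambda_i-\lambda_j)^{-1}$ and odd powers of $1/\e$, and it is precisely Proposition~\ref{nopoleH} (regularity on the diagonals) together with the identity $D(-a,-b;s)=D(b,a;s)$ that remove them. The only genuinely delicate point is the bookkeeping verifying that cyclic reversal acts as a bijection of $S_k/C_k$, so that the symmetrization survives the sign change $z_i\mapsto-z_i$.
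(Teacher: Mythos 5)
Your proof is correct, and it takes a somewhat different route from the paper's. The paper works at the level of the matrix entries: for $|\e|>2|\lambda|$ it expands $\frac12 G\bigl(\frac\lambda\e;\frac{\sqrt q}\e\bigr)-\frac12$ and $\frac{\sqrt q}{\lambda+\e/2}\widetilde G\bigl(\frac\lambda\e;\frac{\sqrt q}\e\bigr)$ as convergent series in $\e^{-1}$ with polynomial coefficients $\A_{2k},\C_m\in\QQ[\lambda,q]$ (equations \eqref{geinf1}--\eqref{geinf2}), records their homogeneity under ${\rm gr}$, and then says the theorem ``follows from the definition of $H_k$'' via the trace formula \eqref{npointholo}, treating $k=1$ through \eqref{H1sum}. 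You instead work with the factorized form of Theorem \ref{newformulaana}: joint analyticity of $H_k$ at the origin (Proposition \ref{nopoleH} to remove the diagonal poles of the $D$-factors), evenness in $s$ from \eqref{Dhgm}, and the reflection identity $D(-a,-b;s)=D(b,a;s)$ combined with cyclic reversal of $S_k/C_k$ to get $H_k(-z_1,\dots,-z_k;s)=H_k(z_1,\dots,z_k;s)$; then you Taylor-expand and regroup by total degree. What your version buys is precisely the two points the paper elides: (i) that the coefficients of $\e^{-m}$ are \emph{polynomials}, i.e.\ free of $(\lambda_i-\lambda_j)^{-1}$ poles --- this genuinely needs Proposition \ref{nopoleH} (or the $B(-z;s)=B(z;s)^T$ symmetry from \eqref{RDonM2}), and the paper's proof of Theorem \ref{einftylimit} does not mention it; and (ii) that only \emph{even} powers of $\e^{-1}$ occur --- this does not follow from \eqref{geinf1}--\eqref{geinf2} alone, since the off-diagonal entry expansion $\sqrt q\sum_m\C_m\e^{-m}$ contains all parities and the grading constraint ${\rm gr}\,H_k=0$ by itself only fixes the weighted degree, not its parity; your identity $D(-a,-b;s)=D(b,a;s)$ (each Pochhammer in \eqref{Dhgm} checks out: $(a-b-2n+1)_{n-1}$ depends only on $a-b$, while $(-a+\frac12)_n$ and $(b+\frac12)_n$ swap) supplies the missing parity argument. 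What the paper's version buys is brevity, the explicit low-order coefficients, and the sharp convergence statement $|\e|>2\max|\lambda_i|$, which your ``polydisc of convergence'' gives only qualitatively. One small polish point: when asserting $c_{\mathbf a,b}\in\QQ$ you should note that the Taylor coefficients of the analytic continuation across the diagonals are limits of expressions with rational coefficients, so rationality survives the pole cancellation; this is easy but worth a sentence.
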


\noindent The sum in the RHS of \eqref{asympHeinf} converges if $|\e|> 2 \, {\rm max} \bigl\{ |\lambda_1|, \dots, |\lambda_k| \bigr\}$. 

Thirdly we look at the $q\rightarrow 0$ limit.
\begin{thm}\label{q0limit} 
\textit{Fix $k\geq 1$. For $\lambda_i \notin \e\,\ZZ + \frac{\epsilon}2$, $i=1,\dots,k$, as $q\rightarrow 0$,
\beq\label{asympHq}
H_k\Bigl(\frac{\lambda_1}\e, \dots, \frac{\lambda_k}\e; \frac{q^{1/2}}\e\Bigr)  \; \sim \; \sum_{d\geq 0} q^d \, H_{k,d}(\lambda_1,\dots,\lambda_k;\e)
\,, \eeq
where $H_{k,d}(\lambda_1,\dots,\lambda_k;\e)$ are rational functions of $\lambda_1,\dots,\lambda_k,\e$ with poles only at $\lambda_i=m\, \e/2$ with $|m|<2d$ odd,  
and ${\rm gr} \, H_{k,d} = -2d  \,H_{k,d}$. The 
$q\to 0$ asymptotic of $H_1$ has the explicit expression
\beq\label{H1q0asym}
H_1\Bigl(\frac{\lambda}\e; \frac{q^{1/2}}\e\Bigr)  \; \sim \; \sum_{d=1}^\infty q^d \,  \frac{(2d-1)!}{d!^2 \, \prod_{j=1}^d \bigl(\lambda^2-\tfrac{(2j-1)^2}{4}\e^2\bigr)} \,.
\eeq
}
\end{thm}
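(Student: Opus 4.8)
The plan is to deduce both the general ($k\ge2$) and the explicit ($k=1$) statements directly from the closed formulas already at our disposal: Theorem~\ref{newformulaana} together with the hypergeometric expansion of Proposition~\ref{dhyper} for $k\ge2$, and equation~\eqref{H1sum} for $k=1$. The key observation is that the substitution $z_i=\lambda_i/\e$, $s=q^{1/2}/\e$ turns the $q\to0$ limit into the $s\to0$ limit, and every building block $D(a,b;s)$ is, by Proposition~\ref{dhyper}, an entire power series in $s^2$ whose coefficients are rational functions of $a,b$. Since for $k\ge2$ the function $H_k$ is a \emph{finite} sum of products of such factors (plus the elementary $\delta_{k,2}$ term), multiplying out and collecting powers of $s^2=q/\e^2$ produces a series $\sum_{d\ge0}q^d H_{k,d}$ with $H_{k,d}(\lambda_1,\dots,\lambda_k;\e)=\e^{-2d}c_d(\lambda_1/\e,\dots,\lambda_k/\e)$, where $c_d$ is the coefficient of $s^{2d}$ in $H_k$; as the $c_d$ are rational in the $z_i$, the $H_{k,d}$ are rational in $\lambda_1,\dots,\lambda_k,\e$.

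First I would pin down the pole structure. For $n\ge1$ the coefficient of $s^{2n}$ in $D(a,b;s)$ has denominator $n!\,(-a+\tfrac12)_n\,(b+\tfrac12)_n$, whose only zeros lie at $a=\tfrac{m}2$ ($m$ odd, $1\le m\le 2n-1$) and at $b=-\tfrac{m}2$ ($m$ odd, $1\le m\le 2n-1$), while the numerator $(a-b-2n+1)_{n-1}$ is a polynomial. In each cyclic product $\prod_i D(z_{\sigma(i)},z_{\sigma(i+1)};s)$ a fixed variable $z_j$ occurs exactly once as a first argument (with some $s^2$-weight $n_a$) and exactly once as a second argument (weight $n_b$), and since the total weight equals $d$ we have $n_a,n_b\le d$. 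Hence in $c_d$ the variable $z_j$ can produce poles only at $z_j=\pm\tfrac{m}2$ with $m$ odd and $m\le 2d-1<2d$; rescaling gives poles of $H_{k,d}$ only at $\lambda_j=\tfrac{m\e}2$, $|m|<2d$ odd, as claimed. The only other potential singularities are the diagonal poles $z_{\sigma(i)}=z_{\sigma(i+1)}$ produced by the $n=0$ term $1/(a-b)$ of $D$; these cancel order by order in $s$ because $H_k$ itself is diagonal-analytic by Proposition~\ref{nopoleH}, and the $s$-expansion being unique this cancellation descends to each $c_d$.

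The grading statement is then immediate. Since the arguments $\lambda_i/\e$ and $q^{1/2}/\e$ are invariant under the scaling generated by $\mathrm{gr}$, we have $\mathrm{gr}\,H_k(\tfrac{\lambda_1}\e,\dots,\tfrac{\lambda_k}\e;\tfrac{q^{1/2}}\e)=0$; applying $\mathrm{gr}=\e\p_\e+2q\p_q+\sum_i\lambda_i\p_{\lambda_i}$ term by term to $\sum_d q^d H_{k,d}$ and using $2q\p_q(q^d)=2d\,q^d$ forces $\mathrm{gr}\,H_{k,d}=-2d\,H_{k,d}$. For $k=1$ I would simply substitute into \eqref{H1sum}: using the factorization $(z-n+\tfrac12)_{2n}=\prod_{j=1}^n\bigl(z^2-\tfrac{(2j-1)^2}4\bigr)$ and then $z=\lambda/\e$, $s^{2n}=q^n/\e^{2n}$, the powers of $\e$ cancel and one reads off precisely \eqref{H1q0asym}.

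I expect the main obstacle to be the bookkeeping of the pole orders: one must argue cleanly that in a cyclic product each variable contributes to the denominator only through its two appearances, that the associated weights are each bounded by $d$, and—most delicately—that the diagonal singularities really disappear after summation, which is exactly where Proposition~\ref{nopoleH} is indispensable. I would also note that, because every $D(a,b;s)$ is entire in $s$, the series $\sum_d q^d H_{k,d}$ is in fact convergent in $q$ for fixed generic $\lambda_i,\e$, so here $\sim$ may be read as an equality; we retain the asymptotic notation only for uniformity with the other three limits.
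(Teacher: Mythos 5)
Your proof is correct, and it differs from the paper's in one genuine (if modest) respect. The paper's proof works directly with the matrix entries: it expands the convergent hypergeometric series $G$ and $\widetilde G$ (the entries of $B$) in powers of $q$, with denominators $\prod_{\ell}\bigl[\lambda+(\ell-i+\frac12)\e\bigr]$ making the rationality in $\lambda,\e$ and the pole locations visible, and then invokes the trace-product definition \eqref{twopointholo}--\eqref{npointholo} of $H_k$; for $k=1$ it expands \eqref{H1sum} exactly as you do. You instead start from the factorized form of Theorem~\ref{newformulaana} and expand the scalar factors $D(a,b;s)$ via Proposition~\ref{dhyper}. The two starting formulas are equivalent, but your choice buys cleaner pole bookkeeping: in each cyclic product every variable occurs exactly twice, once per argument slot of $D$, with $s^2$-weights each bounded by $d$, which yields the bound $|m|<2d$ at a glance, whereas extracting the same bound from the matrix-entry expansion requires tracking which entry of which factor carries the variable (and noting that the off-diagonal entries, which carry half-integral $q$-weights, can only occur an even number of times in a trace). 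You also make explicit two points that the paper's two-line proof leaves implicit: the cancellation of the diagonal poles $1/(z_{\sigma(i)}-z_{\sigma(i+1)})$ order by order in $s$, which you correctly derive from Proposition~\ref{nopoleH} together with the joint analyticity of $H_k$ in the $z_i$'s and $s$ (so the $s$-Taylor coefficients, being rational and analytic along the diagonals, have no diagonal poles), and the convergence of the $q$-series, so that $\sim$ is in fact an equality --- a fact the paper only signals by the phrase ``the definition itself gives the asymptotic.'' Your grading argument via ${\rm gr}\, H_k\bigl(\frac{\lambda_1}\e,\dots,\frac{\lambda_k}\e;\frac{q^{1/2}}\e\bigr)=0$ and your $k=1$ computation from \eqref{H1sum} coincide with the paper's.
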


\noindent The proof is in Section \ref{sectionlimits}.  
The first few rational functions $H_{k,d}(\lambda_1,\dots,\lambda_k,q)$ are listed here:
\begin{align}
& H_{2,1} \= \e^2 \frac{1}{(\lambda_1^2-\frac{\e^2}4)(\lambda_2^2-\frac{\e^2}4)} \,, \quad H_{2,2} \= \e^2 \frac{3\lambda_1^2+2\lambda_1\lambda_2+3\lambda_2^2-9\e^2}
{(\lambda_1^2-\frac{\e^2}4)(\lambda_2^2-\frac{\e^2}4)(\lambda_1^2-\frac{9\e^2}4)(\lambda_2^2-\frac{9\e^2}4)}\,, \nn\\
& H_{2,3} \= \e^2 \frac{10 \lambda_1^4+8 \lambda_1^3 \lambda_2+12 \lambda_1^2 \lambda_2^2+8 \lambda_1 \lambda_2^3+10 \lambda_2^4 - \e^2(110 \lambda_1^2+68 \lambda_1 \lambda_2+110  \lambda_2^2)+ 325 \e^4}
{(\lambda_1^2-\frac{\e^2}4)(\lambda_2^2-\frac{\e^2}4)(\lambda_1^2-\frac{9\e^2}4)(\lambda_2^2-\frac{9\e^2}4)(\lambda_1^2-\frac{25\e^2}4)(\lambda_2^2-\frac{25\e^2}4)} \,,\nn\\
& H_{3,1} \= \e^4 \frac{1}{(\lambda_1^2-\frac{\e^2}4)(\lambda_2^2-\frac{\e^2}4)(\lambda_3^2-\frac{\e^2}4)} \,, \nn\\
& H_{3,2} \= 
\text{\footnotesize{$\e^4 \frac{48 (\lambda_1^2 \lambda_2^2 + \lambda_1^2 \lambda_3^2 +\lambda_2^2\lambda_3^2) +32 \lambda_1 \lambda_2 \lambda_3(\lambda_1+\lambda_2+\lambda_3) 
- 24\e^2 ( \lambda_1 \lambda_2+ \lambda_1 \lambda_3 +\lambda_2\lambda_3 + 6 (\lambda_1^2+\lambda_2^2+\lambda_3^2)) + 351 \e^4} {(\lambda_1^2-\frac{\e^2}4)(\lambda_2^2-\frac{\e^2}4)(\lambda_3^2-\frac{\e^2}4)(\lambda_1^2-\frac{9\e^2}4)(\lambda_2^2-\frac{9\e^2}4)(\lambda_3^2-\frac{9\e^2}4)} $ } }.\nn
\end{align}

Finally we look at the $q\rightarrow \infty$ limit.

\begin{thm}  \label{qinflimit} 
\textit{
Fix $k\geq 2$. As $q\rightarrow \infty$, with fixed $\e$, $\lambda_1$,\dots,$\lambda_k$ and $\bigl|{\rm arg}\, (q^{1/2}/\e) \bigr|<\pi$,
\begin{align}
& \biggl(\prod_{i=1}^k \cos \frac{\pi \lambda_i}\e  \biggr)  \, H_k\Bigl(\frac{\lambda_1}\e, \dots, \frac{\lambda_k}\e; \frac{q^{1/2}}\e\Bigr) 
 \; \sim \;  \sum_{d\geq 0} q^{-\frac d2} \, H_k^{d,0}(\lambda_1,\dots,\lambda_k,\e)  \nn\\
&  ~~  \+  \sum_{d\geq 1} q^{-\frac d2} \, \sum_{m= 1}^{{\rm min} \{d,k\}} \bigg[H_k^{d,m}(\lambda_1,\dots,\lambda_k,\e) \cos \Bigl(4 m \frac{q^{1/2}}\e\Bigr)  
+ \widetilde H_k^{d,m}(\lambda_1,\dots,\lambda_k,\e) \sin \Bigl(4 m \frac{q^{1/2}}\e\Bigr) \bigg] \label{asympHq}
\end{align}
where $H_{k}^{d,m}(\lambda_1,\dots,\lambda_k,\e)$ and $\widetilde H_k^{d,m}(\lambda_1,\dots,\lambda_k,\e)$ 
are elements in the ring 
$$\QQ(\lambda_1,\dots,\lambda_k,\e)\biggl[\sin \frac{\pi \lambda_1} \e, \,
\cos\frac{\pi \lambda_1}\e, \, \dots, \, \sin\frac{\pi \lambda_k}\e, \, \cos \frac{\pi \lambda_k}\e \biggr]\,.$$}
\textit{For $k=1$ and $\bigl|{\rm arg}\, (q^{1/2}/\e) \bigr|<\pi$, as $q\rightarrow \infty$ (with fixed $\e,\lambda$),  the following asymptotic holds
\begin{align}
& \cos \Bigl(\frac{\lambda}\e \pi \Bigr) \,  H_1^*\Bigl(\frac{\lambda}\e; \frac{q^{1/2}}\e\Bigr) \,  \sim \,  
-\frac{\pi}{2} \sin \Bigl(\frac{\pi  \lambda }{\e}\Bigr) \+ \sin \Bigl(\frac{\pi  \lambda }{\e}\Bigr) \sum_{d\geq 0} \frac{(2d-1)!! \, \prod_{j=-d}^{d} (\lambda-\e j)}{(2d+1) \, d!\, 2^{3d+1} \, q^{d+\frac12}}   \nn\\
&\qquad\qquad\qquad\qquad\qquad\quad  \+ \cos \Bigl(4\frac{q^{1/2}}\e\Bigr)   \sum_{d\geq 1} q^{- \frac{d}2} H_1^{*\,d} 
\+ \sin \Bigl(4\frac{q^{1/2}}\e\Bigr)  \sum_{d\geq 1} q^{- \frac{d}2}  \widetilde H_1^{*\,d}\,  \label{asympHq1}
\end{align}
where $\e^{d-2} H_1^{*\,d}$, $ \e^{d-2} \widetilde H_1^{*\,d}$  
are elements in $ \QQ[\lambda,\e]$, and ${\rm gr} \, H_1^{*\,d} \= d \, H_1^{*\,d}$, ${\rm gr} \, \widetilde H_1^{*\,d} \= d \, \widetilde H_1^{*\,d}$. 
}
\end{thm}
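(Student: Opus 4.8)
The plan is to substitute the large-argument (Hankel) asymptotics of the Bessel functions into the closed product formula of Theorem~\ref{newformulaana}, which rests on the rank-one factorization of Proposition~\ref{BBB}. Setting $z_i=\lambda_i/\e$, $s=q^{1/2}/\e$ and $y:=2s=2q^{1/2}/\e$, the hypothesis $|\arg(q^{1/2}/\e)|<\pi$ places $y$ in the sector in which
\[ J_\nu(y)=\tfrac12\bigl(H^{(1)}_\nu(y)+H^{(2)}_\nu(y)\bigr),\qquad H^{(\pm)}_\nu(y)\,\sim\,(\pi s)^{-1/2}\,e^{\pm i(y-\nu\pi/2-\pi/4)}\sum_{r\ge0}\frac{(\pm i)^r a_r(\nu)}{y^r} \]
holds, with $a_r(\nu)\in\QQ[\nu]$. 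Writing each Bessel factor through $H^{(1)},H^{(2)}$ keeps every phase a single exponential $e^{\pm iy}$ times a power series in $y^{-1}=\e/(2q^{1/2})$, which is precisely the $q^{-1/2}$-grading of the asserted expansion.

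First I would feed this into the definition~\eqref{defDabX} of $D^*(a,b;s)$. Each $D^*$ is $(a-b)^{-1}$ times a sum of two products of two Bessel functions; expanding via $H^{(\pm)}$ gives terms carrying a pure $y$-oscillation $e^{ipy}$ with $p$ even and a $\lambda$-dependent phase. The decisive elementary fact — already visible at leading order, where the two summands produce $\cos\tfrac{(a+b)\pi}{2}$ together with mutually opposite multiples of $\cos\bigl(y+\tfrac{(a-b)\pi}{2}\bigr)$ — is that the oscillatory ($p\ne0$) contributions of the two summands enter with opposite signs and cancel, while the non-oscillatory ($p=0$) pieces reinforce. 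This yields the non-oscillatory leading term $D^*(a,b;s)\sim \dfrac{\cos(\pi(a+b)/2)}{\pi s\,(a-b)}$ and is the mechanism that suppresses the oscillations $\cos(4m\,q^{1/2}/\e)$, $\sin(4m\,q^{1/2}/\e)$ to higher order.

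Next I would multiply the $k$ factors around a cyclic word $\prod_{i=1}^kD^*(z_{\sigma(i)},z_{\sigma(i+1)})$, a product of $2k$ Bessel functions. Two bookkeeping observations put the output into the asserted ring. First, the $2k$ prefactors $(\pi s)^{-1/2}$ combine to $(\pi s)^{-k}$, cancelling the $\pi^ks^k$ of~\eqref{npointholo}; after the factor $\prod_i\cos(\pi z_i)$ also cancels against the multiplier on the left-hand side of the theorem, the result is $O(1)$ and expandable in $q^{-1/2}$ (the $q$-independent $\delta_{k,2}$-term of Theorem~\ref{newformulaana} simply adds to the non-oscillatory coefficient $H_k^{0,0}$). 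Second, each vertex $z_i$ occurs in exactly two consecutive edges, once in the first ``$a$-slot'' of $D^*$ and once in the second ``$b$-slot''; the orders in the $a$-slot have $z_i$-coefficient $-1$ and those in the $b$-slot have $z_i$-coefficient $+1$, so the contribution of $z_i$ to the total $\lambda$-phase is $(\eta_a-\eta_b)\tfrac{\pi}{2}\in\{-\pi,0,\pi\}$ for signs $\eta_a,\eta_b\in\{\pm1\}$ coming from the choice of $H^{(\pm)}$. Hence the $\lambda$-phase depends on each $z_i=\lambda_i/\e$ only through integer multiples of $\pi z_i$, i.e. through full angles $\pi\lambda_i/\e$. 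Separating the pure $y$-oscillation $e^{2imy}=e^{4im\,q^{1/2}/\e}$ from the $\lambda$-phase and collecting by powers of $q^{-1/2}$ produces the $\cos(4m\,q^{1/2}/\e)$, $\sin(4m\,q^{1/2}/\e)$ structure with coefficients $H_k^{d,m},\widetilde H_k^{d,m}$ in $\QQ(\lambda_1,\dots,\lambda_k,\e)[\sin\tfrac{\pi\lambda_i}{\e},\cos\tfrac{\pi\lambda_i}{\e}]$, since the $a_r(\nu)$ are polynomial in the $\lambda_i/\e$ and the edge weights carry $(a-b)^{-1}=\e/(\lambda_i-\lambda_j)$. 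The case $k=1$ is done separately from~\eqref{H1starJ}: differentiating the Hankel expansion in the order $\nu$ (so $\p_z\cos\omega_\nu=\tfrac{\pi}{2}\sin\omega_\nu$), the same leading cancellation gives $\cos(\pi z)\,H_1^*\sim-\tfrac{\pi}{2}\sin(\pi z)$, matching the first term of~\eqref{asympHq1}; the gradings ${\rm gr}\,H_1^{*\,d}=d\,H_1^{*\,d}$, ${\rm gr}\,\widetilde H_1^{*\,d}=d\,\widetilde H_1^{*\,d}$ are then forced by the ${\rm gr}$-invariance of $q^{1/2}/\e$, $\sin\tfrac{\pi\lambda}{\e}$, $\cos\tfrac{\pi\lambda}{\e}$ against the weight $-d$ of $q^{-d/2}$.

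The main obstacle is the \emph{layered} cancellation. At the lowest order $q^0$ only the non-oscillatory part may survive, so the oscillatory contributions with $p=2m\ne0$ — which naively already appear at $O(1)$ — must cancel there; more generally one must show that the $m$-th oscillation is pushed down to order $q^{-m/2}$, giving the bound $m\le\min\{d,k\}$ in~\eqref{asympHq1} and its $k\ge2$ analogue. Establishing that the opposite-sign matching of the oscillatory terms in the two summands of each $D^*$ persists to all orders of the Hankel series, and that the surviving combinations after taking the cyclic product vanish below order $q^{-m/2}$, is the crux of the argument; I expect it to follow from the algebraic structure of the $a_r(\nu)$ but to require a careful induction. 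A secondary, more routine point is to justify that the formal product of Hankel series is a genuine uniform asymptotic expansion on the stated sector $|\arg(q^{1/2}/\e)|<\pi$ — this is exactly where that hypothesis is used — and that the exponentially small errors between $J_\nu$ and its Hankel series do not contribute to the expansion.
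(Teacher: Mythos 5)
Your proposal follows essentially the same route as the paper's proof: both rest on Watson's large-argument expansion \eqref{yinf} combined with the Bessel-product factorization of Proposition~\ref{BBB}. The only organizational difference is where the expansion is inserted: the paper substitutes \eqref{yinf} into \eqref{GBB1}--\eqref{GBB3} to obtain the $|s|\to\infty$ expansions of $G(z;s)$ and $\frac{s}{z+1/2}\widetilde G(z;s)$ (the Lemma in Section~\ref{sectionlimits}, with $d_r(z)\in\QQ[z^2]$, $e_r(z)\in\QQ[z(z+1)]$ and the extra non-oscillatory $\tan(\pi z)$-series), and then finishes from the definition \eqref{npointholo} of $H_k$ by elementary trigonometric identities; you instead expand each factor $D^*$ in the product formula of Theorem~\ref{newformulaana}. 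Your phase bookkeeping (each $z_i$ occurring once in an $a$-slot and once in a $b$-slot, so that only integer multiples of $\pi\lambda_i/\e$ survive in the $\lambda$-phases) is correct and, if anything, makes the bound $m\le\min\{d,k\}$ more transparent than the paper's trig-identity ending, where the entries of $B$ themselves still carry $O(1)$ oscillations.

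The one point to repair is your assessment of the ``crux''. The opposite-sign cancellation of the oscillatory parts of the two summands of $D^*$ does \emph{not} persist to all orders of the Hankel series, and it must not: if it did, all coefficients $H_k^{d,m}$ with $m\ge1$ would vanish, contradicting for instance the nonzero $H_2^{1,1}$ listed in Section~\ref{section5}. What is true, and all that is needed, is a one-step statement: writing the $e^{\pm 2iy}$ coefficient (with $y=2s$) of $J_{-a-\frac12}(y)J_{b-\frac12}(y)+J_{\frac12-a}(y)J_{\frac12+b}(y)$ via your $H^{(\pm)}$ decomposition, the two summands contribute $e^{\pm i\left(2y+\frac{(a-b)\pi}{2}\right)}\,(A-B)/(2\pi y)$, where $A$ and $B$ are the products of the two relevant correction series (the relative minus sign coming from the shift of the orders by $1$, i.e. $e^{\mp i\pi}=-1$); since $a_0(\nu)\equiv 1$, one has $A-B={\rm O}(1/y)$, so the oscillatory part of each $D^*$ is suppressed by exactly one power of $s^{-1}$ relative to its non-oscillatory leading term $\cos(\pi(a+b)/2)/(\pi s\,(a-b))$. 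Since each factor of the cyclic product contributes total phase $0$ or $\pm2y$, a net oscillation $e^{\pm 2imy}=e^{\pm 4ims}$ requires at least $m$ oscillatory factor-choices, each costing one power of $q^{-1/2}$ beyond leading order; this yields $m\le\min\{d,k\}$ directly, with no induction at all. With this replacement for your anticipated ``careful induction'', together with the (routine) uniformity of Watson's expansion in the sector $|\arg y|\le\pi-\delta$ that you already noted, your argument is complete and in substance identical to the paper's.
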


\end{subtheorem}

\subsection{Organization of the paper} In Section \ref{sec2} we review the matrix resolvent approach and prove Theorem \ref{prop1}. In Section \ref{sec3} we prove 
Proposition \ref{ptopod}, Theorems \ref{pbispec}--\ref{thm05}. In Section \ref{sectionana} we prove 
Propositions \ref{nopoleH}--\ref{dhyper} and Theorems \ref{newformulaana}--\ref{thm2}. In Section \ref{sectionlimits} 
we  prove Theorems \ref{e0limit}--\ref{qinflimit}. 
Further remarks are in Section \ref{section5}. 

\subsection{Acknowledgements} 
We would like to thank Anton Mellit and Mattia Cafasso for discussions. 
One of the authors D.Y. is grateful to Youjin Zhang for his advice.  

\section{Matrix resolvent approach to the Toda Lattice Hierarchy} \label{sec2}
The matrix resolvent approach for computing tau-functions of the Toda Lattice Hierarchy was developed in \cite{DY1}.  
Let us give a short review. 
Let $L$ denote the following difference operator 
\beq\label{difference}
L \= \Delta \+ v_n \+ w_n \, \Delta^{-1} \,
\eeq
where $\Delta$ denotes the shift operator, i.e. $\Delta: \psi_n \mapsto \psi_{n+1}$.  The Toda Lattice Hierarchy is defined by
\begin{align}
& \frac{\p L}{\p t_i} \= \frac{1}{(i+1)!}\,  \bigl[A_i \,,\, L\bigr],\qquad i\geq 0\,, \label{TLH1}\\
& A_i \= \bigl(L^{i+1}\bigr)_+\,. \label{TLH2}
\end{align}
One observes that the normalization used here for the time variables $t_0,t_1,t_2,\dots$ 
is not the standard one \cite{DY1}, but the one suitable for the study of GW invariants of $\mathbb{P}^1$. 

\subsection{Toda conjecture}
We begin with a brief recall of the {\it Toda conjecture}, now a theorem:

\noindent {\bf Theorem} (\cite{OP}, \cite{DZ-toda}).   \textit{Denote 
$\F^{\rm s} \= \F^{\rm s}(x,\bt;\e) \:= \F \bigl(T^1_j = x \, \delta_{j,0}, \, T^2_j= t_j,\,j=0,1,\dots; \e;q=1 \bigr)$
with $\bt=(t_0,t_1,t_2,\dots)$. Let $Z:=e^{\F^{\rm s}}$. Define $u,v$ by
\begin{align}
& v \= v(x,\bt;\e) \:=  \e \frac{\p }{\p t_0 } \log \frac{Z(x+\e, \bt; \e)}{Z(x,\bt;\e)} \,, \label{defv}\\
& u \= u(x,\bt;\e) \:=  \log \frac{Z(x+\e, \bt; \e) \, Z (x-\e, \bt;\e)}{Z^2(x,\bt;\e)} \,. \label{defu}
\end{align}
Then $u,v$ satisfy the Toda Lattice Hierarchy with the first equation being}
\begin{align}
& \frac{\p v(x,\bt;\e)}{\p t_0} \= \frac1{\e} \, \Bigl(e^{u(x+\e,\bt;\e)} - e^{u(x,\bt;\e)}\Bigr) \, ,\\
& \frac{\p u(x,\bt;\e)}{\p t_0} \= \frac1{\e} \, \bigl(v(x,\bt;\e)-v(x-\e,\bt;\e)\bigr) \, .
\end{align}
The Toda conjecture was formulated in \cite{Du1, EY, Ge}, and was later proved by 
Okounkov--Pandharipande \cite{OP}; an extension \cite{DZ-toda} of this conjecture 
to the full generating function \eqref{freeenergy} requires an introduction of the 
extended Toda hierarchy \cite{CDZ} in terms of a suitably defined logarithm of the 
difference operator $L$ \eqref{difference}; see also \cite{OPeq}.
A slightly stronger version of this conjecture was also confirmed in the above proofs, 
namely, $Z$ is a particular {\bf tau-function} (in the sense of \cite{DZ-toda,DZ-norm,DY1}) 
of the Toda Lattice hierarchy. This property along with the {\it string equation} 
\beq\label{string}
\sum_{i=1}^\infty t_i \frac{\p Z}{\p t_{i-1}}  \+ \frac{x \, t_0}{\e^2} Z \= \frac{\p Z}{\p x}
\eeq
uniquely determines $Z$ up to a constant factor (independent of $\e$\,!) only. 

\subsection{Matrix resolvent}
Denote by $\mathbb{Z}[\bv,\bw]$ the ring of polynomials with integer coefficients 
in the infinite set of variables $\bv=(v_n), \, \bw=(w_n), \, n\in \mathbb{Z}$.
The (basic) matrix resolvent $R_n(\lambda)$ associated with $L$ is defined as
the unique solution to the following problem \cite{DY1}
\begin{align}
& R_{n+1}(\lambda) \, U_n(\lambda) \,-\, U_n(\lambda) R_n(\lambda) \= 0\,, \label{rg1} \\
& {\rm tr} \, R_n(\lambda) \= 1,\qquad {\rm \det} \, R_n(\lambda) \= 0\,,  \label{rg2} \\
& R_n(\lambda) \= \begin{pmatrix} 1 & 0  \\  0 & 0 \\ \end{pmatrix}
\+ {\rm O}\bigl(\lambda^{-1}\bigr) \, \in \, {\rm Mat} \bigl(2, \mathbb{Z}[\bv,\bw][[\lambda^{-1}]]\bigr)  \label{rg3}
\end{align}
where $U_n(\lambda) := \begin{pmatrix} v_n-\lambda & w_n\\ -1 & 0\end{pmatrix}$.
Write 
$$
R_n(\lambda) \= 
\begin{pmatrix} 1+\alpha_n(\lambda) & \beta_n(\lambda)  \\  \gamma_n(\lambda) & -\alpha_n(\lambda) \\ \end{pmatrix} \,, \quad 
\alpha_n(\lambda),\,\beta_n(\lambda),\, \gamma_n(\lambda)\in {\rm O}(\lambda^{-1})\,.
$$
Then the above equations \eqref{rg1}--\eqref{rg2} become a series of recursive relations for $\alpha_n,\beta_n,\gamma_n$:
\begin{align}
& \beta_n \= - w_n \, \gamma_{n+1} \,, \label{rr1} \\
& \alpha_{n+1} \+ \alpha_n \+ 1 \= (\lambda- v_n) \, \gamma_{n+1} \,, \label{rr2} \\
& (\lambda - v_n)  \, ( \alpha_n - \alpha_{n+1}) \= w_n \, \gamma_n \,-\, w_{n+1} \, \gamma_{n+2} \,, \label{rr3} \\
& \alpha_n \+ \alpha_n^2 \+ \beta_n \, \gamma_n  \= 0\,. \label{rr4} 
\end{align}
Along with the initial values \eqref{rg3} one can find $\alpha_n,\beta_n,\gamma_n$ in an algebraic way \cite{DY1}.  Indeed,
write 
$$
\gamma_n  \= \sum_{j\geq 0} \frac{c_{n,j}}{\lambda^{j+1}},\qquad \alpha_n \= \sum_{j\geq 0} \frac{a_{n,j}}{\lambda^{j+1}}\,.
$$
Substituting these expressions into \eqref{rr1}--\eqref{rr4} we obtain
\begin{align}
& c_{n,j+1} \= v_{n-1} \, c_{n,j} \+ a_{n,j} \+ a_{n-1,j} \, , \label{rec1}\\
& a_{n,j+1} \,-\, a_{n+1,j+1} \+ v_n \, [ a_{n+1,j} - a_{n,j}]  \+ w_{n+1} \,c_{n+2,j} \,-\, w_n \, c_{n,j} \= 0\,, \label{rec2}  \\
& a_{n,\ell} \=\sum_{i+j=\ell-1} \bigl[w_n \, c_{n,i} \, c_{n+1,j} - a_{n,i} \, a_{n,j}  \bigr]\,, \label{rec3} \\
& a_{n,0}=0, \qquad c_{n,0}=1 \,.\label{rec4}
\end{align}
We shall call \eqref{rg1}--\eqref{rg3}, or \eqref{rr1}--\eqref{rr4}, or \eqref{rec1}--\eqref{rec4} the {\it matrix-resolvent recursive
relations for the Toda Lattice Hierarchy}. It should be noted that these recursive relations are valid for an 
arbitrary solution $\bigl(v_n(\bt),w_n(\bt)\bigr)$ to the Toda Lattice Hierarchy; moreover, the form of matrix-resolvent recursive relations as well as equation \eqref{rec4} 
do not depend on the solution. 

\subsection{From matrix resolvent to tau-function}
For an arbitrary solution $\bigl(v_n(\bt),w_n(\bt)\bigr)$ of the Toda Lattice Hierarchy,  
there exists a unique (up to multiplying by exponential of an arbitrary linear function in $n,t_0,t_1,\dots$) 
function $\tau_n({\bf t})$ satisfying \cite{DY1}
\begin{align}
& \sum_{i,\, j\geq 0} \frac{\partial^2\log\tau_n({\bf t})}{\partial t_i \,\partial t_j} \frac{(i+1)! (j+1)!}{\lambda^{i+2} \mu^{j+2}} \= \frac{{\rm tr}\, R_n(\lambda; {\bf t})R_n(\mu;{\bf t})-1}{(\lambda-\mu)^2} \, ,
\label{taun1} \\
& \frac1{\lambda} \+ \sum\limits_{i\geq 0} \frac{\pal}{\pal t_i} \log \frac{\tau_{n+1}({\bf t})}{\tau_n({\bf t})}  \frac{(i+1)!}{\lambda^{i+2}} \=  \gamma_{n+1}(\lambda; {\bf t}) \, ,
\label{taun2} \\
&\frac{\tau_{n+1}({\bf t}) \tau_{n-1}({\bf t})}{\tau_n^2({\bf t})} \= w_n \, . \label{taun3}
\end{align}
Here, $R_n(\lambda;\bt):=R_n(\lambda)|_{v_n=v_n(\bt),\,w_n=w_n(\bt)}$. 
We call $\tau_n(\bt)$ the {\it tau-function} of the solution $\bigl(v_n(\bt),w_n(\bt)\bigr)$. Indeed, 
by interpolating using $x=n\epsilon$, we know that the Toda Lattice Hierarchy is a tau-symmetric integrable system of Hamiltonian 
PDEs within the normal form of \cite{DZ-norm}, and the identification between $\tau_n(\bt)$ and the 
tau-function of \cite{DZ-norm,CDZ,DZ-toda} is made in \cite{DY1}. 
By a straightforward residue computation (comparing coefficients of $\mu^{-2}$ in \eqref{taun1}), we obtain
\beq
\sum_{i\geq 0} \frac{(i+1)!}{\lambda^{i+2}} \frac{\partial^2\log\tau_n({\bf t})}{\partial t_0 \partial t_i} \= \alpha_n(\lambda;\bt)\,.
\eeq

\noindent {\bf Theorem A} (\cite{DY1}).  {\it Generating series of logarithmic derivatives of $\tau_n(\bt)$ have the following 
expressions}
\beq\label{thm-kp}
    \sum_{i_1,\dots,i_k\geq 0} \frac{\partial^k\log\tau_n({\bf t})}{\partial t_{i_1} \cdots \partial t_{i_k}}\prod_{\ell=1}^n \frac{(i_\ell+1)!}{\lambda_\ell^{i_l+2}} 
 \=   - \sum_{\sigma\in S_k/C_k} \frac{\tr \,\left[ R_n(\lambda_{\sigma_1};\bt)\cdots R_n(\lambda_{\sigma_k};\bt)\right]}
{\prod_{i=1}^k (\lambda_{\sigma_i}-\lambda_{\sigma_{i+1}})}\, , \quad \forall\,k\geq 3\,.
\eeq

\subsection{Proof of Theorem \ref{prop1}} \label{section3}
The first step is to give the initial value of the GW solution. 
\begin{lemma}\label{simplelemma} 
The initial value of the solution \eqref{defv}, \eqref{defu} of the Toda Lattice Hierarchy is given by 
\begin{align}
& u(x,\bt= {\bf 0}; \e) \= 0\,, \nn\\
& v(x,\bt= {\bf 0}; \e) \= x+ \frac \e 2 \, . \nn
\end{align}
\end{lemma}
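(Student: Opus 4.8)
The plan is to unfold the definitions \eqref{defu}--\eqref{defv} at $\bt=\mathbf 0$ and reduce everything, via the degree--dimension matching, to a few explicitly computable correlators. After setting $T^1_j=x\,\delta_{j,0}$, $T^2_j=t_j$ in \eqref{freeenergy} and then putting $\bt=\mathbf 0$, the only surviving variable is $T^1_0=x$, which inserts copies of $\tau_0(\phi_1)$, while $\p_{t_0}=\p_{T^2_0}$ inserts one $\tau_0(\phi_2)$; hence
\[
\F^{\rm s}(x,\mathbf 0;\e)=\sum_{k\ge 0}\frac{x^k}{k!}\,\langle\tau_0(\phi_1)^k\rangle(\e,1),\qquad
\frac{\p\F^{\rm s}}{\p t_0}(x,\mathbf 0;\e)=\sum_{k\ge 0}\frac{x^k}{k!}\,\langle\tau_0(\phi_2)\,\tau_0(\phi_1)^k\rangle(\e,1),
\]
where $k$ counts the $\phi_1$-insertions. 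For each correlator the degree--dimension matching forces $2g+2d+k=2$, so the only surviving data are $(g,d,k)\in\{(0,0,2),(0,1,0),(1,0,0)\}$; in particular both series are quadratic in $x$ with no linear term.

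First I would treat $v$. The single $x$--dependent contribution to $\p_{t_0}\F^{\rm s}(x,\mathbf 0;\e)$ is the $(g,d,k)=(0,0,2)$ term $\tfrac{x^2}{2\e^2}\,\langle\tau_0(\phi_2)\tau_0(\phi_1)^2\rangle_{0,0}$, and this genus--$0$, degree--$0$ three--point invariant is just the classical triple product $\int_{\mathbb{P}^1}\omega\cup 1\cup 1=1$. Collecting the $(0,1,0)$ and $(1,0,0)$ contributions into an $x$--independent constant $C(\e)$ gives $\p_{t_0}\F^{\rm s}(x,\mathbf 0;\e)=\tfrac{x^2}{2\e^2}+C(\e)$. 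Substituting into \eqref{defv}, the constant cancels in the finite difference and
\[
v(x,\mathbf 0;\e)=\e\,\Bigl[\tfrac{(x+\e)^2}{2\e^2}-\tfrac{x^2}{2\e^2}\Bigr]=x+\tfrac{\e}{2},
\]
as claimed; note that the explicit values of $\langle\tau_0(\omega)\rangle_{0,1}$ and $\langle\tau_0(\omega)\rangle_{1,0}$ never enter.

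For $u$ the cleanest route is the string equation \eqref{string}. At $\bt=\mathbf 0$ both summands on its left-hand side vanish, since they carry explicit factors $t_i$ respectively $t_0$; hence $\p_x Z|_{\bt=\mathbf 0}=0$, i.e.\ $\F^{\rm s}(x,\mathbf 0;\e)$ is independent of $x$. The second difference defining $u$ in \eqref{defu} is then identically zero and $u(x,\mathbf 0;\e)=0$. Equivalently, the direct computation yields the same conclusion: the only candidate $x$--dependent term in $\F^{\rm s}(x,\mathbf 0;\e)$ is $\tfrac{x^2}{2\e^2}\,\langle\tau_0(\phi_1)^2\rangle_{0,0}$, which vanishes because the classical pairing $\langle\tau_0(\phi_1)^2\rangle_{0,0}=\int_{\mathbb{P}^1}1\cup 1$ is zero.

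The one delicate point is the interpretation of the degree--$0$, genus--$0$ correlators with two and three marked points, for which $\overline{\mathcal M}_{g,k}(\mathbb{P}^1,0)$ lies outside the Deligne--Mumford stable range: these are fixed by the standard conventions of the Dubrovin--Zhang / Okounkov--Pandharipande framework underlying the Toda conjecture, the two-point ones being the Poincaré metric $\eta_{ab}=\int_{\mathbb{P}^1}\phi_a\cup\phi_b$ (so $\eta_{11}=0$) and the three-point ones the structure constants of $H^*(\mathbb{P}^1)$ (so $\langle\tau_0(\omega)\tau_0(1)^2\rangle_{0,0}=1$). Once these normalizations are granted, every remaining ingredient is forced by the matching, and I anticipate no further difficulty.
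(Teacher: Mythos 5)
Your proof is correct, but it interchanges the two tools that the paper uses: the paper derives $v(x,\mathbf 0;\e)=x+\frac\e2$ from the string equation (differentiating \eqref{stringF} with respect to $t_0$ and setting $t_1=t_2=\dots=0$ to get $\p_x\p_{t_0}\F^{\rm s}=x/\e^2$, then applying the difference operator), and handles $u$ by the direct route --- degree--dimension matching restricting to $(g,d,k)\in\{(0,0,2),(0,1,0),(1,0,0)\}$, with the $x^2$-coefficient killed by the explicit genus-zero Frobenius potential $F=\frac12(v^1)^2v^2+e^{v^2}$ restricted to $v^2=0$. You do exactly the opposite: direct correlator computation for $v$ and the string equation for $u$. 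Your $u$-argument is in fact cleaner than the paper's: setting $\bt=\mathbf 0$ in \eqref{string} makes the whole left-hand side vanish (every term carries an explicit $t_i$ or $t_0$), so $\p_x\F^{\rm s}(x,\mathbf 0;\e)\equiv 0$ and the second difference in \eqref{defu} is identically zero, with no appeal to the Frobenius potential or to conventions for unstable moduli. The trade-off surfaces in your $v$-argument, where you must evaluate $\langle\tau_0(\omega)\tau_0(1)^2\rangle_{0,0}=\int_{\mathbb{P}^1}\omega=1$ and worry about normalizations; the paper's string-equation route packages this automatically, since the term $\frac{x\,t_0}{\e^2}Z$ in \eqref{string} encodes precisely that classical triple product. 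One small refinement to your ``delicate point'': the three-point correlator is not actually outside the stable range --- $\overline{\mathcal M}_{0,3}(\mathbb{P}^1,0)$ is a perfectly good moduli space and the invariant is the honest classical intersection number --- so the only genuinely convention-dependent quantity is the two-point $\langle\tau_0(1)^2\rangle_{0,0}$ in your alternative computation of $u$, which your primary string-equation route sidesteps anyway (and which vanishes under either convention since $\eta_{11}=0$). Both proofs are complete; yours trades one explicit geometric input ($v$) for one structural simplification ($u$).
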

\pf 
The string equation \eqref{string} can be written equivalently as
\beq\label{stringF}
\sum_{i=1}^\infty t_i \frac{\p \F^{\rm s}}{\p t_{i-1}}  \+ \frac{x \, t_0}{\e^2}  \= \frac{\p \F^{\rm s}}{\p x}\,.
\eeq
Differentiating both sides of \eqref{stringF} w.r.t. $t_0$ we obtain 
$\sum_{i=1}^\infty t_i \frac{\p^2 \F^{\rm s}}{\p t_{i-1} \p t_0}  + \frac{x}{\e^2}  = \frac{\p^2 \F^{\rm s}}{\p x \p t_0}$.
Taking $t_1=t_2=\dots = 0$ in this equation gives
$\frac{\p^2 \F^{\rm s}}{\p x\, \p t_0}(x,t_0,0,0,\dots;\e) \equiv \frac{x}{\e^2}$.
In particular, $\frac{\p^2 \F^{\rm s}}{\p x \p t_0} (x,0,0,\dots;\e) = \frac{x}{\e^2}$.
Therefore using \eqref{defv} we have
$$
v(x,0,0,\dots;\e) \= \frac{e^{\e \p_x} -1}{\e \p_x} \e^2   \frac{\p^2 \F^{\rm s}}{\p x \p t_0} (x,0,0,\dots;\e) \=  x \+ \frac \e 2 \,.
$$
We now look at the initial value of $u$. Since 
$$
u (x,0,0,\dots;\e) \= \F^{\rm s} (x+ \e, 0,0, \dots;\e ) \+ \F^{\rm s}(x-\e, 0,0, \dots;\e) \,-\, 2 \F^{\rm s}(x, 0, 0, \dots;\e)\,,
$$
we only need to find coefficients of $x^n$ in the Taylor expansion of $\F^{\rm s}(x,0,0,\dots; \epsilon)$.
The degree-dimension matching implies  
$2g -  2 + 2d + n = 0$.
So the only possible choices are $(g,d,n)=(0, 0, 2), (0, 1, 0), (1, 0,0)$. The constant terms do not contribute to $u (x,0,0,\dots;\e)$. 
The quadratic term cannot appear because of the well-known expression of the genus zero primary potential (the potential of the corresponding Frobenius manifold) is 
$$
F\= \bigl(\epsilon^2{\mathcal F}^{\rm s}\bigr)_{
\epsilon=0} \= \frac12 (v^1)^2 v^2 \+ e^{v^2}\,, \quad \mbox{with} ~ v^1=v|_{\epsilon=0}, ~ v^2=u|_{\epsilon=0}.
$$
Clearly, after restricting to $v^2=0$ and $v^1=x$, there is no $x^2$ term.
\epf

We now proceed to the proof of Theorem \ref{prop1}.
Recall the interpolation formula $ x = n \, \epsilon$. Then the above Lemma~\ref{simplelemma} implies that, 
for the particular solution \eqref{defv}--\eqref{defu} to the Toda Lattice Hierarchy
\begin{align}
& u_n(\bt= {\bf 0}; \e) \= 0\,, \label{un0}\\
& v_n(\bt= {\bf 0}; \e) \= \e \, n+ \frac \e 2 \, . \label{vn0}
\end{align}
Substituting \eqref{un0}--\eqref{vn0} into \eqref{rr1}--\eqref{rr4} 
we obtain the following recursive relations for the entries of the {\it initial} (basic) matrix resolvent 
\begin{align}
& \alpha_{n+1} \+ \alpha_n \+ 1 \=  \Bigl(\lambda - \e \, n- \frac \e 2\Bigr) \, \gamma_{n+1}  \,, \label{rrr1} \\
& \Bigl(\lambda - \e \, n- \frac \e 2 \Bigr)  \, ( \alpha_n - \alpha_{n+1}) \=  \gamma_n \,-\,   \gamma_{n+2} \,, \label{rrr2} \\
& \alpha_n \+ \alpha_n^2  \,-\,  \gamma_n \, \gamma_{n+1} \= 0\,. \label{rrr3} 
\end{align} 
Here, ``initial" means at ${\bt}=0$. 
The theorem is then proved by taking $\bt=0$ in \eqref{thm-kp} and \eqref{taun1}. \epf

As before, write 
\beq\label{gaform}
\gamma_n \= \gamma_n(\lambda;\e)  \= \sum_{j\geq 0} \frac{c_{n,j}}{\lambda^{j+1}},\qquad \alpha_n \= \alpha_n(\lambda;\e) \= \sum_{j\geq 0} \frac{a_{n,j}}{\lambda^{j+1}}\,.
\eeq
Then equations \eqref{rrr1}--\eqref{rrr3} become
\begin{align}
& c_{n,j} \= \e \Bigl(n - \frac 1 2 \Bigr) \, c_{n,j-1} + a_{n,j-1} + a_{n-1,j-1}  \,, \label{a1} \\
& a_{n,j} \,-\, a_{n+1,j} \+ \Bigl(\e \, n+ \frac \e 2\Bigr) \, ( a_{n+1,j-1} - a_{n,j-1})  \+  \, c_{n+2,j-1} \,-\,  c_{n,j-1} \= 0\,, \label{a12} \\
& a_{n,j} \= \sum_{i=0}^{j-1} \bigl(c_{n,i} \, c_{n+1,j-1-i} - a_{n,i} \, a_{n,j-1-i} \bigr) \label{a2} 
\end{align}
together with the initial data for the recursion  
\beq\label{gaini}
a_{n,0} \=0, \quad c_{n,0} \=1\,.
\eeq

The first several terms of $\alpha_n,\gamma_n$ are given by
\begin{align}
& \alpha_n \= \frac{1}{\lambda ^2} \+ \frac{2 n \e}{\lambda ^3} \+ \frac{3  n^2 \e^2 +\frac{\e^2}{4}+3}{\lambda ^4} \+ \frac{4 n^3 \e^3 +  n (\e^3 +12 \e )}{\lambda ^5} \+ \cdots\,, \label{first1}\\
& \gamma_n \=  \frac{1}{\lambda } \+ \frac{n\e-\frac{\e}{2}}{\lambda ^2}  \+ \frac{n^2 \e^2  - n\e^2 +\frac{\e^2}{4}+2}{\lambda ^3} 
\+ \frac{ n^3\e^3 - \frac32 n^2 \e^3 + n (\frac{3 \e^3 }{4} +6 \e )-\frac{\e^3}{8}-3 \e}{\lambda ^4} \+ \cdots. \label{first2}
\end{align}

\section{Solving the matrix-resolvent recursive relations of $\mathbb{P}^1$} \label{sec3}
The goal of this section is to solve equations \eqref{r1}--\eqref{r3}. 
We start with proving Proposition \ref{ptopod}.
\subsection{Proof of Proposition \ref{ptopod}} Write
$M(z,s)=\begin{pmatrix} 1+a(z,s) & b(z,s)  \\  c(z,s) & -a(z,s) \\ \end{pmatrix}$.
The topological difference equation \eqref{m1xp}, i.e. 
$M(z-1;  s) \begin{pmatrix} z-1/2 & -s\\ s & 0\end{pmatrix} \=  \begin{pmatrix} z-1/2 & -s\\ s & 0\end{pmatrix} \, M(z;s)$,
written in terms of $a,b,c$ reads as follows:
\begin{align}
& b(z)  \+  c(z-1) \= 0\,, \nn\\
& \bigl(z-\frac12 \bigr)\, b(z)  \+ s \, \bigl( 1+ a(z-1) +  a(z) \bigr) \=0\,, \nn \\
& s \, b(z-1) \+ s \, c(z)   \+ \bigl(z -\frac12\bigr) \, \bigl( a(z-1)  -  a(z) \bigr)  \=0\,. \nn
\end{align}
Here, $a(z),b(z),c(z)$ are short notations for $a(z,s),b(z,s),c(z,s)$; below we 
keep using these notations when no confusion will occur. It follows from these equations that
\beq\label{bca}
 c(z) \= s  \, \frac{1 + a(z) + a( z+1)}{z + \frac12} \,, \qquad  b(z) \= - s \, \frac{1 + a(z-1) + a(z)}{ z-\frac12} \,. 
\eeq
Moreover, the topological difference equation is reduced to the following 3rd order linear difference equation (with a parameter $s$) for $a$:
\beq\label{scalartopo}
s^2 \Bigl(\frac{ 1 +  a(z)+ a(z+1) }{ z+\frac12}-\frac{1+a(z-2)+a(z-1)}{z-\frac32}\Bigl) \+ \bigl(z -\frac12\bigr)\, \bigl(a(z-1) -  a(z) \bigr)  \=0\,.
\eeq
Write 
$a(z,s) \=  \sum_{k\geq 0} A_k \, z^{-k-1} $. Then equation \eqref{scalartopo} is equivalent to the following equations:
\begin{align}
& -8(k+2) A_{k+1}  \=   - \,16 \, s^2 \, \delta_{k,0}  \,-\,  3 \bigl(1+4 s^2\bigr) A_{k-1}  \+ 2 \bigl(1+4s^2\bigr)A_k \nn\\ 
& \qquad\qquad \+  \sum_{k_1,n\geq 0 \atop k_1+n+1=k}  A_{k_1} \bigl((-1)^{n+1} 12\, s^2- 2^{n+2} s^2 +(3-4s^2)\bigr) \binom{1+k_1}n  \nn\\
& \qquad\qquad \+ \sum_{k_1,n\geq 0\atop k_1+n =k} A_{k_1} \bigl( (-1)^n 8 \, s^2-2^{n+3} s^2  - 2(1+4s^2)\bigr) \binom{1+k_1}n \nn\\
& \qquad\qquad \, - \, 12 \sum_{k_1\geq 0,n\geq 1 \atop k_1+n=k+1}  A_{k_1} \binom{1+k_1}n \+ 8 \sum_{k_1\geq 0,n\geq 2 \atop k_1+n=k+2}  A_{k_1} \binom{1+k_1}n \,, \qquad k\geq -1\,. 
\end{align}
Here it is understood as $A_{-2}=A_{-1}=0$. Together with \eqref{bca}, this recursion proves the existence and uniqueness of a solution $M^*$ of the form 
$M^* = \begin{pmatrix} 1 & 0  \\  0 & 0 \\ \end{pmatrix}
+ \sum_{k\geq 1}  M^*_k \, z^{-k}$.  Moreover, the fact that each entry of $M^*_k$ is a polynomial of $s$ can be seen easily from this recursion (for $A_k$). 
Finally, taking the determinants of both sides of \eqref{m1xp} we have
$$
s^2 \, \det M^*(z-1) \= s^2 \, \det M^*(z)  \quad \Rightarrow \quad  \det M^*(z-1) \= \det M^*(z) \,. 
$$
It is easy to see that $\det M^*(z) \in z^{-1}Q[s][[z^{-1}]]$. Noting that $A_0=0$ we find that the coefficient of $z^{-1}$ in $\det M^*(z)$ also vanishes. Therefore, $\det M^*(z)$ vanishes. 
The proposition is proved. \epf

\subsection{Proof of Theorem \ref{pbispec}} In this subsection, we prove Theorem \ref{pbispec}. 
The proof is similar with the one given in \cite{BDY3} 
(see the ``Key Lemma" i.e. Lemma 4.2.3 therein). 

\noindent {{\it Proof} of Theorem \ref{pbispec}}.   
Define $R_n^*(\lambda;\e)=M^*\bigl(\frac{\lambda}\e-n; \frac1\e\bigr)$. It is easy to check that 
$R_n^*(\lambda;\e)$ satisfies \eqref{r1}--\eqref{r3}. Since 
$R_n(\lambda;\e)$ is the unique solution to \eqref{r1}--\eqref{r3}, we have $R_n(\lambda;\e)=R_n^*(\lambda;\e)$. 
By definition, $\R(\lambda;x;\e)=R_{x/\epsilon}(\lambda;\epsilon)$. Hence $\R(\lambda;x;\e)$ only depends on  
$\lambda-x$ and $\e$.
The theorem is proved. \epf

\subsection{Proof of Theorem~\ref{conj-corr} and Theorem~\ref{thm05}} 
To prove Theorems~\ref{conj-corr} and~\ref{thm05}, we must show two things:
\begin{itemize}
\item[i)] Prove that the entries of the matrix-valued meromorphic function $B(z;s)$ defined by equations \eqref{defBzs}--\eqref{Hyper2}
have asymptotic expansions as power series in $z^{-1}$ (for $|z|\rightarrow \infty$ at a bounded distance away from half integers) given by the RHS of~\eqref{ram}.
\item[ii)] Show that the function~$B(z;s)$ satisfies the properties~\eqref{m1xp}--\eqref{m1xp1} with $M$ replaced by~$B$. 
\end{itemize} 

For step~i),  we must look at the asymptotics of $G(z;s)$ and $\widetilde G(z;s)$ as $|z|\rightarrow \infty$ with $s$ bounded, say $|s|\leq S$.  We 
consider only the case of~$G$, since the case of $\widetilde G$ is exactly similar. We claim first that 
\begin{align}
G(z;s) \:=  \sum_{m=0}^\infty \binom{2m}m \,  \frac{s^{2m}}{\bigl(z - m +  \frac12\bigr)_{2m}}  
\= \sum_{m=0}^{N-1} \binom{2m}m \,  \frac{s^{2m}}{\bigl(z - m +  \frac12\bigr)_{2m}} \+ {\rm O}\bigl(z^{-2N}\bigr) \label{star}
\end{align}
for any fixed $N\in\NN$ as $z\rightarrow \infty$ at a bounded distance from $\ZZ+\frac12$. Indeed, the terms with 
$N\leq m\leq \frac12|z|$ in~\eqref{star} 
are individually bounded by~$\frac{2^{2m} S^{2m}} {(|z|/2)^{2m}}$ (because each factor in the Pochhammer symbol in the denominator has absolute value $\geq\frac12 |z|$), 
so their sum is $\leq \sum_{m= N}^\infty \bigl(\frac{4S}{|z|}\bigr)^{2m} = {\rm O}\bigl(z^{-2N}\bigr)$. The terms with $m>\frac12|z|$ are individually bounded by 
~$\frac{2^{2m}s^{2m}}{\delta \, m! \, (m-1)!}$, where $\delta$ is the distance from $z$ to $\ZZ+\frac12$, so their sum is smaller than any fixed negative powers of $|z|$ as 
$|z|\rightarrow \infty$ with $\delta$ fixed.  Now using a partial fraction development in each summand in~\eqref{star}, we find
\begin{align}
G(z;s) & \= 1\+2\sum_{m=1}^{N-1} \frac{s^{2m}}{m! \, (m-1)!} \sum_{\ell=0}^{2m-1} \frac{(-1)^{\ell} \binom{2m-1}\ell}{z-m+\ell+\frac12} \+ {\rm O}\bigl(z^{-2N}\bigr)\,\nn\\
& \= 1 \+ 2 \sum_{r=1}^{2N-1} \frac1{z^r}\,  \sum_{0\le\ell<2m\le r} \frac{(-1)^\ell s^{2m}}{m! (m-1)! }\, \binom{2m-1}\ell \bigl(m-\ell-\tfrac12\bigr)^{r-1}  
\+ {\rm O} \bigl(z^{-2N}\bigr)\,,
\end{align}
where we have removed the terms with $2m>r$ because the $(2m-1)$st (backwards) difference of a polynomial of degree~$r-1$ vanishes identically if~$2m>r$.
We also note that the terms with $r$ odd give zero (replace $\ell$ by $2m-1-\ell$), so we can set $r=2j+2$, $m=i+1$ to recover the expression given in~\eqref{defofalpha},
proving that $G(z;s)\sim1+2\alpha$ as claimed.

Now we do step~ii). The explicit expression for $M$ given in the statement of Theorem~\ref{conj-corr} 
clearly has the form $M=\begin{pmatrix} 1 & 0  \\  0 & 0 \\ \end{pmatrix}
+ \sum_{k\geq 1}  M_k \, z^{-k}$. Therefore using Proposition \ref{ptopod} we only need to show that 
$M(z,s)$ satisfies \eqref{m1xp}--\eqref{m1xp1}. Then due to Proposition \ref{thm05} it suffices to show that 
$B(z,s)$ satisfies \eqref{m1xp}--\eqref{m1xp1} for $z\in\CC-\ZZ_{\rm odd}$. Identity \eqref{m1xp1} is obvious for $B(z,s)$. Identity \eqref{m1xp} is equivalent to 
\begin{align}
& \widetilde G(z;s) \=  \frac{G(z;s)+ G(z+1,s)}{2} \, , \label{1sum2} \\
& \frac{\widetilde G(z+\frac12,s)}{z+1} - \frac{\widetilde G(z-\frac32,s) }{z-1} \= \frac{z}{2s^2} \biggl[G\bigl(z+\frac12,s\bigr) - G\bigl(z-\frac12,s\bigr)\biggr]\,. \label{1diff} 
\end{align}
Identity \eqref{1sum2} is true since 
$G(z;s) \+ G(z+1,s) =  \sum_{i=0}^\infty \binom{2i}{i} \Bigl[  \frac{s^{2i}}{(z-i+\frac12)_{2i}} +  \frac{s^{2i}}{(z-i+\frac32)_{2i}} \Bigr]=2 \, \widetilde G(z;s)$. 
Similarly, we find that identity \eqref{1diff} is true. 
\epf

Note that the $k$-point function $F_k(\lambda_1, \dots, \lambda_k; 0; \e,1)$ ($k\ge2$) can be expressed in terms of $M$ by 
\begin{align}
& F_k(\lambda_1, \dots, \lambda_k;x;\e,1) \= -\sum_{\sigma\in S_k/C_k} \frac{ \tr \,\Bigl[M\Bigl(\frac{\lambda_{\sigma(1)}-x}{\e};\frac1\e\Bigr)
\cdots M\Bigl(\frac{\lambda_{\sigma(k)}-x}\e;\frac1\e\Bigr)\Bigr]} {\prod_{i=1}^k \bigl(\lambda_{\sigma(i)}-\lambda_{\sigma(i+1)}\bigr)} 
\, - \, \frac{\delta_{k,2}}{(\lambda_1-\lambda_2)^2}\,\,.\label{npoint00}
\end{align}
\noindent The validity of this identity is understood in the formal power series ring $\QQ\bllm x,\lambda_1^{-1},\dots,\lambda_k^{-1}\brrm$.

\begin{prop}
For any $k\geq 2$, the following formula holds true
\begin{align}
& \e^k \, \sum_{i_1,\dots,i_k\geq 0}  
\frac{(i_1+1)! \cdots (i_k+1)!}{\lambda_1^{i_1+2} \cdots \lambda_k^{i_k+2}} 
 \sum_{m,g,d\geq 0\atop  2g+2d-2+2m=  \sum i_{\ell}}   \frac{q^{d}}{m!}  \e^{2g-2} \, \bigl\langle \tau_{i_1}(\omega) \cdots \tau_{i_k}(\omega) \tau_0(1)^m\bigr\rangle_{g,d} \nn\\
&\= -\frac{1}{k}\sum_{\sigma\in S_k} 
\frac{\tr \,\Bigl[{\mathcal R}\bigl( q^{-1/2} \lambda_{\sigma_1}; q^{-1}; q^{-1/2} \e\bigr)\dots {\mathcal R}\bigl( q^{-1/2} \lambda_{\sigma_k}; q^{-1} ; q^{-1/2} \e\bigr)\Bigr]}
{\prod_{i=1}^k (\lambda_{\sigma_i}-\lambda_{\sigma_{i+1}})} \,-\,  \frac{\delta_{k,2}}{(\lambda_1-\lambda_2)^2} \,.
\end{align}
\end{prop}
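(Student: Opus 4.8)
The plan is to show that both sides of the asserted identity are equal to the single quantity $F_k(\lambda_1,\dots,\lambda_k;1;\e,q)$, after which the proposition becomes a transparent consequence of \eqref{npoint00}, the scaling relation \eqref{scaling2}, and the definition of the partial correlation functions. So the work splits into an analysis of the right-hand side and an unfolding of the left-hand side.

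First I would treat the right-hand side. Since the summand $\tr\bigl[\R(\lambda_{\sigma_1};\cdots)\cdots\R(\lambda_{\sigma_k};\cdots)\bigr]/\prod_i(\lambda_{\sigma_i}-\lambda_{\sigma_{i+1}})$ is invariant under the cyclic subgroup $C_k\subset S_k$, the averaged sum $\frac1k\sum_{\sigma\in S_k}$ collapses to $\sum_{\sigma\in S_k/C_k}$. Recalling $\R(\lambda;x;\e)=M\bigl(\frac{\lambda-x}\e;\frac1\e\bigr)$ from Theorem \ref{pbispec}, I would then apply \eqref{npoint00} with the rescaled arguments $\lambda_i\mapsto q^{-1/2}\lambda_i$, $x\mapsto q^{-1}$, $\e\mapsto q^{-1/2}\e$. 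Pulling the factor $q^{k/2}$ out of the product of the $k$ denominators $q^{-1/2}\lambda_{\sigma_i}-q^{-1/2}\lambda_{\sigma_{i+1}}$ (and, when $k=2$, the factor $q$ out of $(q^{-1/2}\lambda_1-q^{-1/2}\lambda_2)^2$) shows that the right-hand side equals $q^{-k/2}F_k\bigl(q^{-1/2}\lambda_1,\dots,q^{-1/2}\lambda_k;q^{-1};q^{-1/2}\e;1\bigr)$ plus the multiple $(q^{1-k/2}-1)\,\delta_{k,2}/(\lambda_1-\lambda_2)^2$; the latter vanishes, since for $k=2$ the prefactor $q^{1-k/2}-1=q^{0}-1$ is zero, and for $k\ge3$ the Kronecker delta is zero. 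Finally \eqref{scaling2} taken at $x=1$ identifies $q^{-k/2}F_k\bigl(q^{-1/2}\lambda_\bullet;q^{-1};q^{-1/2}\e;1\bigr)$ with $F_k(\lambda_1,\dots,\lambda_k;1;\e,q)$.

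Next I would unfold the left-hand side. By the definition of the partial $k$-point correlation functions, differentiating the free energy \eqref{freeenergy} in $T^2_{i_1},\dots,T^2_{i_k}$ and then restricting $T^\alpha_i=\delta_{\alpha1}\delta_{i0}x$ leaves, in each monomial of $\F$, exactly $m$ surviving couplings, every one of which contributes a factor $x$ attached to a $\tau_0(1)=\tau_0(\phi_1)$ insertion, the multiplicities combining with the $1/k!$ to give the weight $1/m!$; hence
$$
\bll\tau_{i_1}(\omega)\cdots\tau_{i_k}(\omega)\brr(x;\e,q)\=\sum_{m\ge0}\frac{x^m}{m!}\,\bigl\langle\tau_{i_1}(\omega)\cdots\tau_{i_k}(\omega)\,\tau_0(1)^m\bigr\rangle(\e,q)\,.
$$
Setting $x=1$ in the definition \eqref{mainsum} of $F_k$ and substituting $\langle\,\cdots\,\rangle(\e,q)=\sum_{g,d}\e^{2g-2}q^d\langle\,\cdots\,\rangle_{g,d}$ turns $F_k(\lambda_1,\dots,\lambda_k;1;\e,q)$ into precisely the triple sum on the left-hand side, the range of summation being cut out by the degree–dimension matching for $\bigl\langle\tau_{i_1}(\omega)\cdots\tau_{i_k}(\omega)\tau_0(1)^m\bigr\rangle_{g,d}$.

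I expect the main obstacle to be Step~2, namely reconciling the rescaling of the arguments of $\R$ with the scaling relation \eqref{scaling2} and verifying the exact cancellation of the two $\delta_{k,2}$ contributions (the one written explicitly in the statement and the one produced by the rescaled square in \eqref{npoint00}); the remaining accounting of the powers of $q$ and of the genus–degree expansion is routine. Once these are in place, comparing the two expressions for $F_k(\lambda_1,\dots,\lambda_k;1;\e,q)$ yields the proposition.
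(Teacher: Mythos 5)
Your proposal is correct and is essentially the paper's own proof, which consists of the single line ``Use Proposition \ref{prop1}, \eqref{RDon}, as well as \eqref{scaling2}'': you combine \eqref{npoint00} (itself the fusion of Theorem \ref{prop1} with \eqref{RDon}) with the scaling relation \eqref{scaling2} at $x=1$ and the Taylor expansion of the partial correlation functions, and your bookkeeping of the factor $q^{k/2}$ and of the cancellation of the two $\delta_{k,2}$ terms (via $q^{1-k/2}-1=0$ at $k=2$) is accurate. One remark: unfolding the degree--dimension matching for $\bigl\langle \tau_{i_1}(\omega)\cdots\tau_{i_k}(\omega)\,\tau_0(1)^m\bigr\rangle_{g,d}$ actually yields the constraint $2g+2d-2+m=\sum_{\ell}i_\ell$ (each $\tau_0(1)$ insertion contributes $1$, not $2$), so the ``$2m$'' in the printed statement is a typo that your derivation silently corrects.
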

\noindent {\it Proof}. Use Proposition \ref{prop1}, \eqref{RDon}, as well as \eqref{scaling2}. \epf

\begin{remark}
R.~Pandharipande~\cite{P} proves that the numbers $\bigl\langle\tau_1(\omega)^{2g-2+d}\bigr\rangle_{g,d}$ coincide with the 
classical Hurwitz numbers $H_{g,d}$ defined by Hurwitz~\cite{Hurwitz}. A polynomial algorithm of 
computing these numbers has been obtained very recently~\cite{DYZ1} based on Pandharipande's equation \cite{P,DYZ1}. 
Although the formula~\eqref{npoint00} for $F_k$ with $k=2g-2+d$ contains the numbers
$H_{g,d}$, the algorithm designed from~\eqref{npoint00} is not of polynomial-time (note that 
however \eqref{npoint00} contains much more information than $H_{g,d}$). 
\end{remark}

\subsection{Proof of Proposition \ref{nopoleH}} The main observation is that for any $k\geq 3$, we have
\begin{align}
H_k (z_1,\dots,z_k) 
& \= - \sum_{\sigma\in S_{k}/C_{k}}  \frac{\tr \,\left[B(z_{\sigma(1)}, s) \dots B(z_{\sigma(k)},s)\right]}
{\prod_{i=1}^k (z_{\sigma_i}-z_{\sigma_{i+1}})} \nn\\ 
& \=  - \sum_{j=1}^{k-1} \sum_{\sigma\in S_{k-1}/C_{k-1}} \frac{\tr \,\left[B(z_{\sigma(1)}, s) \dots [B(z_k,s), B(z_{\sigma(j)},s)]\dots B(z_{\sigma(k-1)},s)\right]}
{(z_k-z_{\sigma(j)}) \prod_{i=1}^{k-1} (z_{\sigma_i}-z_{\sigma_{i+1}})} \,. \nn
\end{align}
So $H_k$ is analytic along $z_k = z_i$ for $i\neq k$ away from the half-integer points. Note that $H_k(z_1,\dots,z_k)$ is totally symmetric w.r.t. permutations of $z_1,\dots,z_k$. Therefore 
$H_k$ is also analytic along $z_j=z_i$ for $i\neq j$ (for any $j$). 
The case $k=2$ follows immediately from the second equality in~\eqref{twopointholo}.
\epf

\section{Proof of the factorization formulas} \label{sectionana}
We begin by giving the proof of Proposition~\ref{BBB} of Section~\ref{sec1.3}, giving an explicit factorization of the rank~1 matrix $B(z,s)$ as the 
product of a column vector and a row vector.

\noindent {{\it Proof} of Proposition \ref{BBB}}.  We have to prove the following three identities for hypergeometric ${}_1F_2$-functions 
as sums of products of Bessel functions:
\begin{align}
& \frac{1+ G(z;s)}2  \= \frac{\pi s} {\cos (\pi z) }  J_{z-\frac12} (2s) \, J_{-z-\frac12}(2s) \,,  \label{GBB1} \\ 
& \frac{1-G(z;s)}2 \= \frac{\pi s} {\cos (\pi z) }   J_{z+\frac12}(2s) \, J_{-z+\frac12}(2s) \,, \label{GBB2} \\ 
&  \frac{s}{z+\frac12}\widetilde G(z;s) \= \frac{\pi s} {\cos (\pi z) }   J_{z+\frac12} (2s) \, J_{-z-\frac12}(2s)\,.  \label{GBB3}
\end{align}
Indeed, 
\begin{align}
\mbox{RHS of}~ \eqref{GBB3} & \= \frac{\pi s} {\cos (\pi z)} 
\frac{1}{\Gamma(z+\frac32)\Gamma(-z+\frac12)} \, \sum_{n\geq 0} (-1)^n \, \frac{s^{2n}}{n!^2 \, \binom{n+z+\frac12}{n}}  \, \sum_{n\geq 0} (-1)^n \, \frac{s^{2n}}{n!^2 \, \binom{n-z-\frac12}{n}}  \nn\\
& \= \frac{s}{z+\frac12} \, \sum_{n\geq 0} s^{2n}  \sum_{n_1+n_2=n}  \, \frac{(-1)^n}{(n_1!)^2 \, (n_2!)^2 \binom{n_1+z+\frac12}{n_1}  \binom{n_2-z-\frac12}{n_2}} \= \mbox{LHS of}~ \eqref{GBB3} \,. \nn
\end{align}
Similarly one proves \eqref{GBB1},\, \eqref{GBB2}.  The factorization $B = u(z) \, u(-z)^T$ can also be verified directly.
\epf

\noindent {\it Proof} of Theorem \ref{newformulaana}.  
For $k\ge2$ we have
\begin{align}
\tr \, (B(z_1) \dots B(z_k) ) & \=  \tr \, \Bigl(u(z_1) u(-z_1)^T u(z_2) u(-z_2)^T \dots u(z_k) \, u(-z_k)^T\Bigr) \nn\\
& \=  \tr \, \Bigl(u(-z_1)^T u(z_2) \dots u(-z_k)^T u(z_1) \Bigr)  \=  \prod_{i=1}^k \, \bigl( u(-z_i)  ,  u(z_{i+1})  \bigr) \,. \nn
\end{align} (indices modulo $k$). Hence each summand in the trace-product formulas \eqref{twopointholo}, \eqref{npointholo} has the form 
\beq\label{summand}
\frac{\tr \, \bigl(B(z_1,s) \dots B(z_k, s) \bigr) }{ \prod_{i=1}^k (z_i-z_{i+1})} 
\= \e^k \, \prod_{i=1}^k D\bigl(z_i, z_{i+1};s\bigr)    \nn
\eeq
where we recall that $D(a,b;s) = u(-a,s)^T \, u(b,s)/(a-b)$, as claimed. \epf

Formula~\eqref{Dhgm} implies the following asymptotic formula for $a,b\notin \ZZ+\frac12$, as $a,b\rightarrow \infty$:
\beq\label{asymD}
D(a,b;s) - \frac1{a-b}  \; \sim \;  \sum_{p,q\geq 0} \frac{(-1)^{q+1}}{a^{p+1} b^{q+1}} 
\sum_{n\geq 1} \frac{s^{2n}}{n!} \sum_{1\leq i,j\leq n\atop i+j\leq n-1} (-1)^{i+j}\frac{(i+j-2n)_{n-1} \bigl(i-\frac12\bigr)^p \bigl(j-\frac12\bigr)^q}{(i-1)!(j-1)!(n-i)!(n-j)!} \,.
\eeq
Proposition~\ref{nopoleH} can be alternatively proved by using this formula.

\noindent {\it Proof }of Theorem \ref{thm2}.~
Differentiating both sides of \eqref{stringF} w.r.t. $t_j$~($j\geq 1$) we obtain  
$$\frac{\p \F^{\rm s}}{\p t_{j-1}} \+ \sum_{i=1}^\infty t_i \frac{\p^2 \F^{\rm s}}{\p t_{i-1} \p t_j}  \= \frac{\p^2 \F^{\rm s}}{\p x \p t_j}\,.$$
Setting $t_0= t_1=\dots =0$ in this equation yields $\llll \tau_{j-1} (\omega) \rrrr(x;\e,1) \= \llll \tau_0 (1) \tau_j (\omega) \rrrr(x;\e,1)$.
Note that equation \eqref{taun2} implies 
$$
\frac1\lambda \+ \e^2 \sum_{i=0}^\infty \frac{(i+1)!}{\lambda^{i+2}} \frac{e^{\e \p_x} -1}{\e \p_x} \bll \tau_0 (1) \tau_i (\omega) \brr (x;\e,1) \= \gamma_{n+1}\,,\qquad x= n \epsilon.
$$
So 
$\frac1\lambda \+ \frac{\e^2}{\lambda^{2}} \Bigl(\frac x{\e^2} + \frac1{2\e}\Bigr) \+
 \e^2 \sum_{i=1}^\infty \frac{(i+1)!}{\lambda^{i+2}} \frac{e^{\e \p_x} -1}{\e \p_x} \bll \tau_{i-1} (\omega) \brr (x;\e,1) \= \gamma_{n+1}$.
Therefore,
$$
-\e \frac{e^{\epsilon \p_x} -1}{\e \p x} \frac{\p F_1 (\lambda;x;\e,1)}{\p \lambda} 
\= \gamma_{n+1} - \frac{\e^2}{\lambda^{2}} \Bigl(\frac x{\e^2} + \frac1{2\e}\Bigr) -\frac1\lambda  \, .
$$ 
Hence we have
\begin{align}
-\frac{\p F_1(\lambda;x;\e,1)}{\p \lambda} 
& \= \sum_{j\geq 2} \frac{1}{\lambda^{j+1}} \sum_{i=0}^\infty  \frac{\e^{-1-2i}}{i!^2}    \sum_{l=0}^{2i} (-1)^{l} \binom{2i}{l} \sum_{m=0}^j \binom{j}{m} B_m \, \e^m  \Bigl(  x + \e \,i +\frac\e2 - \e \, l \Bigr)^{j-m} \,.   \nn
\end{align}
Identity \eqref{onepoint} follows immediately.  The equivalence between this identity and the statement of the theorem has 
already been explained in Section~\ref{sec1.4}.
\epf

\section{Four asymptotics} \label{sectionlimits} 
We have studied several analytic properties of $H_k(z_1,\dots,z_k;s)$.  Motivated by 
the GW theory, in this section, we will investigate further the functions
$H_k\bigl(\frac{\lambda_1}\e, \dots, \frac{\lambda_k}\e; \frac{q^{1/2}}\e\bigr)$. It should be noted that the 
function $W(\lambda,\e,q):=B\bigl(\frac{\lambda}\e; \frac{\sqrt{q}}{\e}\bigr)$ satisfies the following set of equations:
\begin{align}
& W (\lambda+\e,\e, q) \begin{pmatrix} \lambda-\frac{\e}2 & -\sqrt{q}\\ \sqrt{q} & 0\end{pmatrix} 
\=  \begin{pmatrix} \lambda-\frac{\e}2 & - \sqrt{q}\\ \sqrt{q} & 0\end{pmatrix} \, W (\lambda,\e, q) \,, \label{topoB}  \\
& \tr \,  W (\lambda,\e, q) \= 1 \,, \qquad  \det \, W (\lambda,\e, q) \= 0\,, \label{trdetB}\\ 
& W (\lambda,\e, q) \= \begin{pmatrix} 1 & 0  \\  0 & 0 \\ \end{pmatrix}
\+ {\rm O}\bigl(\lambda^{-1}\bigr)  \,, \quad \lambda\rightarrow \infty \,. \label{asympB}
\end{align}

\subsection{The $\e\rightarrow 0$ asymptotic. Proof of Theorem \ref{e0limit}} First we consider $k\geq2$. 
\begin{lemma}  \label{lemmae0} For $0<\frac{2\sqrt{q}}\lambda <1$ and $\frac\lambda\e>0$, 
the following asymptotic formula holds true  as $|\e|\rightarrow 0\,:$
\begin{align}
&  \frac12G\Bigl(\frac{\lambda}\e;\frac{\sqrt{q}}\e\Bigr) -\frac12 \; \sim \; \sum_{m=0}^\infty \e^m a_m(\lambda;q)  \,, 
\nn\\
& \frac{\sqrt{q}}{\lambda+\frac\e2} \, \widetilde G\Bigl(\frac{\lambda}\e;\frac{\sqrt{q}}\e\Bigr) \; \sim \; \sum_{m=0}^\infty \e^m c_m(\lambda;q) \nn
\end{align}
where $a_m\,, c_m$ are algebraic functions of $\lambda,\sqrt{q}$. 
Moreover, for $k,m\geq 0$, the functions $a_{2k+1}(\lambda;q)$ vanish, and $a_{2k}$, $c_m$ satisfy the homogeneity conditions:
${\rm gr} \,  a_{2k} = - 2k \, a_{2k}$, $~{\rm gr} \, c_m = -m \, c_m$.
\end{lemma}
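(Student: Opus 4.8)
The plan is to phrase everything in terms of the matrix $W(\lambda,\e,q)=B\bigl(\tfrac{\lambda}\e;\tfrac{\sqrt q}\e\bigr)$ introduced in \eqref{topoB}. By \eqref{defBzs} its $(1,1)$ and $(2,1)$ entries are exactly $\tfrac12\bigl(1+G(\tfrac\lambda\e;\tfrac{\sqrt q}\e)\bigr)$ and $\tfrac{\sqrt q}{\lambda+\e/2}\,\widetilde G(\tfrac\lambda\e;\tfrac{\sqrt q}\e)$, so the two quantities in the lemma are $W_{11}-1$ and $W_{21}$. It therefore suffices to prove that $W$ admits an asymptotic expansion $W\sim\sum_{m\ge0}\e^m W_m(\lambda,q)$ as $\e\to0$ with each $W_m$ algebraic in $\lambda,\sqrt q$; then $a_0=(W_0)_{11}-1$, $a_m=(W_m)_{11}$ for $m\ge1$, and $c_m=(W_m)_{21}$, and the three assertions become statements about the $W_m$.

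For existence of the expansion I would use the Bessel factorization of Proposition \ref{BBB} together with the Debye (uniform large-order) asymptotics from \cite{Watson}. Setting $z=\lambda/\e$ and argument $2s=2\sqrt q/\e$, the hypothesis $0<2\sqrt q/\lambda<1$ places us in the regime order $>$ argument, which is exactly what makes the expansion clean and is the reason for the remark about $J_\nu(\nu x)$ with $x<1$. Expanding the negative-order factor through $J_{-\mu}=\cos(\mu\pi)J_\mu-\sin(\mu\pi)Y_\mu$ turns identity \eqref{GBB1} into $\tfrac12(1+G)=-\pi s\tan(\pi z)\,J_{z-1/2}(2s)J_{z+1/2}(2s)-\pi s\,J_{z-1/2}(2s)Y_{z+1/2}(2s)$. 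In the Debye regime the $J\cdot J$ product is exponentially small, and since $z$ is kept a bounded distance from $\ZZ+\tfrac12$ the factor $\tan(\pi z)$ stays controlled, so that term contributes nothing to the power-series asymptotics; the surviving $J\cdot Y$ product, once its $\e$-independent exponential factors combine into the algebraic quantity $\tfrac{\lambda+\sqrt{\lambda^2-4q}}{2\sqrt q}$, has a genuine expansion in powers of $\e$ whose coefficients are rational in $\lambda,\sqrt q,\sqrt{\lambda^2-4q}$, hence algebraic. The same argument, based on \eqref{GBB3}, handles the $(2,1)$ entry. As an independent route to the recursion and to algebraicity, one substitutes the ansatz into \eqref{topoB}--\eqref{asympB} and matches powers of $\e$; this gives $[W_m,L_0]=(\text{lower-order terms})$ with $L_0=\bigl(\begin{smallmatrix}\lambda&-\sqrt q\\\sqrt q&0\end{smallmatrix}\bigr)$, solvable using $\tr W_m=\delta_{m0}$, $\det W=0$ and \eqref{asympB}; the leading $W_0$ is the spectral projection of $L_0$ onto the eigenspace of its larger eigenvalue $\tfrac12(\lambda+\sqrt{\lambda^2-4q})$, reproducing $a_0=\tfrac{\lambda-\sqrt{\lambda^2-4q}}{2\sqrt{\lambda^2-4q}}$ and making algebraicity of every $W_m$ manifest.

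The two structural refinements are then purely formal. For homogeneity, $W=B(\lambda/\e;\sqrt q/\e)$ is invariant under $(\lambda,\e,q)\mapsto(c\lambda,c\e,c^2q)$, whose infinitesimal generator is ${\rm gr}=\e\p_\e+2q\p_q+\lambda\p_\lambda$; hence ${\rm gr}\,W=0$, and inserting $W\sim\sum\e^m W_m$ and separating powers of $\e$ yields $(\lambda\p_\lambda+2q\p_q)W_m=-mW_m$, i.e. ${\rm gr}\,a_{2k}=-2k\,a_{2k}$ and ${\rm gr}\,c_m=-m\,c_m$. For the vanishing of the odd coefficients of the first series, I would clear denominators in \eqref{Hyper1} to write $G\bigl(\tfrac\lambda\e;\tfrac{\sqrt q}\e\bigr)=\sum_{m\ge0}\binom{2m}m q^m\big/\prod_{j=0}^{2m-1}\bigl(\lambda+\e(j-m+\tfrac12)\bigr)$; the substitution $j\mapsto 2m-1-j$ shows each denominator is invariant under $\e\mapsto-\e$, so $W_{11}=\tfrac12(1+G)$ is even in $\e$ and $a_{2k+1}=0$. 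The prefactor $\tfrac1{\lambda+\e/2}$ in $W_{21}$ is not even in $\e$, which is exactly why no analogous parity statement is claimed for the $c_m$.

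I expect the main obstacle to be the analytic core of the second paragraph: establishing that the oscillatory and exponentially small Bessel contributions drop out to all orders, and that the Debye product expansion is a bona fide asymptotic expansion uniform for $z$ at bounded distance from $\ZZ+\tfrac12$. Once existence and algebraicity are secured, the homogeneity and parity claims are routine bookkeeping.
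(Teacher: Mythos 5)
Your proposal is correct and follows essentially the same route as the paper, which likewise derives the lemma from the Bessel factorization of Proposition \ref{BBB} together with Watson's large-order (Debye) asymptotics (stated there as Lemma \ref{lemmae0bessel}), and the paper explicitly omits the remaining details that you supply. Your additions --- the connection formula $J_{-\mu}=\cos(\mu\pi)J_\mu-\sin(\mu\pi)Y_\mu$ to isolate the exponentially small $J\cdot J$ piece, the recursion from \eqref{topoB}--\eqref{asympB} identifying $W_0$ as the spectral projection (your $a_0$, $c_0$ match the paper's), the scaling-invariance derivation of the homogeneity, and the $j\mapsto 2m-1-j$ parity argument for $a_{2k+1}=0$ --- are all consistent elaborations of the paper's sketch rather than a different method.
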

The first several $a_{2k},\,c_m$ are given explicitly by
\begin{align}
& a_0 \= \frac{\lambda}{2(\lambda^2-4q)^{\frac12}} - \frac12\,,\quad a_2 \=   \frac{q\lambda(\lambda^2+16q)}{4(\lambda^2-4q)^{\frac72}}\,, \quad a_4 \=   \frac{q\lambda(\lambda^6+247q\lambda^4+2848q^2\lambda^2+3072q^3)}{16(\lambda^2-4q)^{\frac{13}2}} \,, \nn\\
& c_0 \= \frac{\sqrt{q}}{(\lambda^2-4q)^{\frac12}}\,, \quad c_1 \= -\frac{\sqrt{q} \, \lambda}{2(\lambda^2-4q)^{\frac32}} \,, \quad c_2 \= \frac{\sqrt{q} \, (\lambda^4+6 q \lambda^2)}{4(\lambda^2-4q)^{\frac72}}\,, \quad c_3 \= \frac{\sqrt{q}\lambda(\lambda^4+42 q\lambda^2+96 q^2)}{8(\lambda^2-4q)^{\frac92}}  \,.\nn
\end{align}
Lemma \ref{lemmae0} can be proved either by studying the analytic functions \eqref{Hyper1}--\eqref{Hyper2}, or by the following 
 lemma regarding the large-order 
asymptotics
of the Bessel functions \cite{Watson}. 
\begin{lemma} \label{lemmae0bessel}
For any fixed valued $\zeta\in (0,1)$, the following asymptotic holds true: as $\nu\rightarrow +\infty$, 
\beq
J_{\nu-\frac12}(\nu\,\zeta )\;\sim\;  \frac{\bigl(\nu-\frac12\bigr)^{\nu-\frac12}}{\Gamma\bigl(\nu+\frac12\bigr)} e^{V} \,, \quad 
V \= \nu \, V_0 \+ V_1 \+ \frac{V_2}{\nu} + \frac{V_3}{\nu^2} \+ \dots \,, 
\eeq
where $V_m$, $m\geq 0$ are functions of $\zeta$  with the first few given by
\begin{align}
& V_0 \= -1+ \sqrt{1-\zeta^2} \+ \log \zeta - \log \bigl(1+\sqrt{1-\zeta^2}\bigr) \,, \nn\\
& V_1 \= \frac12 + \frac12 \log\bigl(1+\sqrt{1-\zeta^2}\bigr) - \frac12 \log \zeta -\frac14 \log(1-\zeta^2) \,,  \nn\\
& V_2 \=  -\frac{1}{6  } +\frac{1}{4}\frac1 {(1-\zeta^2)} - \frac5{24} \frac{1}{(1-\zeta^2)^{\frac32}} \,,  \nn\\
& V_3 \= -\frac{1}{48} +\frac{1}{4}\frac1{(1-\zeta^2)^{\frac32}} -\frac{1}{4} \frac1{(1-\zeta^2)^2} - \frac{5}{16}\frac1{(1-\zeta^2)^{\frac52}} + \frac{5}{16}\frac1{(1-\zeta^2)^3} \,.\nn
\end{align}
For $m\geq 2$, $V_m$ belongs to $\QQ\Bigl[\bigl(1-\zeta^2\bigr)^{-\frac12}\Bigr]$ having degree $3m-3$. 
\end{lemma}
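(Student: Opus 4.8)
The plan is to recognize Lemma~\ref{lemmae0bessel} as a renormalized form of the classical Debye (large-order) asymptotic expansion of the Bessel function in the monotonic regime where the argument is smaller than the order, and to reduce the statement to that expansion by carefully tracking the nonstandard normalization. First I would introduce $\alpha>0$ through $\cosh\alpha=\zeta^{-1}$, so that $\tanh\alpha=\sqrt{1-\zeta^2}$, $\coth\alpha=(1-\zeta^2)^{-\frac12}=:t$, and $\alpha=\log\frac{1+\sqrt{1-\zeta^2}}{\zeta}$. Watson's steepest-descent analysis (§8.4, saddle point at $w=\alpha$ of the Schl\"afli contour integral) gives, when the order coincides with the scaling parameter,
\begin{equation}
J_\mu(\mu\,\zeta)\;\sim\;\frac{e^{\mu(\tanh\alpha-\alpha)}}{\sqrt{2\pi\mu\,\tanh\alpha}}\,\sum_{k\ge0}\frac{U_k(t)}{\mu^k}\qquad(\mu\to+\infty),
\end{equation}
where the $U_k$ are the Debye polynomials, elements of $\QQ[t]$ of degree $3k$ (with $U_0=1$, $U_1(t)=\tfrac1{24}(3t-5t^3)$). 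Note that $\tanh\alpha-\alpha=\sqrt{1-\zeta^2}+\log\zeta-\log(1+\sqrt{1-\zeta^2})$ and $(1-\zeta^2)^{1/4}=(\tanh\alpha)^{1/2}$, so the exponent and the prefactor are already of the shape appearing in the lemma.

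The only discrepancy with Lemma~\ref{lemmae0bessel} is the normalization: there the order is $\nu-\frac12$ while the argument scales as $\nu\zeta$. I would absorb this by setting $\mu=\nu-\frac12$ and writing $\nu\zeta=\mu\tilde\zeta$ with $\tilde\zeta=\frac\nu\mu\,\zeta=\zeta\bigl(1+\tfrac1{2\nu}+\cdots\bigr)$, so that $J_{\nu-\frac12}(\nu\zeta)=J_\mu(\mu\tilde\zeta)$. Applying the displayed expansion with $\zeta$ replaced by $\tilde\zeta$ (equivalently $\alpha$ replaced by $\tilde\alpha=\alpha+{\rm O}(1/\nu)$, where $\cosh\tilde\alpha=\tilde\zeta^{-1}$) and re-expanding $\tilde\alpha$, $\tanh\tilde\alpha$ and the $U_k(\coth\tilde\alpha)$ around $\alpha$ in powers of $1/\nu$ produces an asymptotic series in $1/\nu$ with coefficients rational in $\sqrt{1-\zeta^2}$. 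Next I would multiply by $\Gamma(\nu+\frac12)/(\nu-\frac12)^{\nu-\frac12}$ and expand $\log\Gamma(\nu+\frac12)$ by Stirling's series; by construction this prefactor is independent of $\zeta$, so taking logarithms collects all the $\nu$-dependent, $\zeta$-independent pieces and leaves $\log\bigl[\Gamma(\nu+\frac12)(\nu-\frac12)^{-(\nu-\frac12)}J_{\nu-\frac12}(\nu\zeta)\bigr]=\nu V_0+V_1+V_2/\nu+\cdots$ with $V_m$ functions of $\zeta$ alone. In particular the factor $e^{\nu}$ extracted from the prefactor accounts for the $-1$ in $V_0=(\tanh\alpha-\alpha)-1$, and the term $\tfrac12\log\tanh\alpha=\tfrac14\log(1-\zeta^2)$ from the Debye prefactor, together with $\tfrac12\alpha=\tfrac12\log\frac{1+\sqrt{1-\zeta^2}}{\zeta}$ and the remaining half-integer Stirling contributions, reproduce $V_1$; continued mechanically, this yields all the $V_m$.

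For the structural assertion when $m\ge2$, I would argue as follows. In the logarithm of the Debye series, the coefficient of $\mu^{-j}$ is a fixed universal polynomial in $U_1,\dots,U_j$; since $\deg U_k=3k$ in $t=\coth\alpha=(1-\zeta^2)^{-\frac12}$, every monomial $U_{k_1}\cdots U_{k_r}$ with $k_1+\cdots+k_r=j$ has degree $3j$, so this coefficient lies in $\QQ[t]$ of degree at most $3j$. The top-degree coefficient does not vanish: for $j=1$ it is the leading term of $U_1$, and for $j=2$ it is the leading term of $U_2-\tfrac12U_1^2$, which is nonzero, giving $-\tfrac5{24}t^3$ and $\tfrac5{16}t^6$ in exact agreement with $V_2$ and $V_3$. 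The corrections from the order/argument shift ($\tilde\alpha-\alpha={\rm O}(1/\nu)$) and from Stirling's series are rational in $\sqrt{1-\zeta^2}$ of degree no larger than the Debye contribution at each order, and supply only the lower-degree (including constant) terms such as $-\tfrac16$ and $-\tfrac1{48}$ appearing in $V_2,V_3$. Since $V_m$ is the coefficient of $\nu^{1-m}$, its top-degree part coincides with that of the $\mu^{-(m-1)}$-coefficient of $\log\sum_k U_k/\mu^k$, which has degree $3(m-1)$; hence $V_m\in\QQ[(1-\zeta^2)^{-\frac12}]$ of degree exactly $3m-3$.

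The main obstacle is the bookkeeping rather than any conceptual difficulty: one must correctly combine three $1/\nu$-expansions — the Debye series, the order/argument shift $\nu\mapsto\nu-\frac12$ with argument $\nu\zeta$, and Stirling's series for $\Gamma(\nu+\frac12)$ — and then verify that after taking logarithms the $\zeta$-dependence organizes into polynomials in $t=(1-\zeta^2)^{-\frac12}$ of the precise degree $3m-3$. The sharpest point is showing the top-degree coefficient never cancels, which reduces to a statement about the explicit leading coefficients of the $U_k$ and can be settled by induction using their standard differential-integral recursion (each step raising the degree by $3$). An alternative, more self-contained route would bypass the double expansion by applying the Laplace/steepest-descent method directly to the Sommerfeld--Schl\"afli integral $J_{\nu-\frac12}(\nu\zeta)=\frac1{2\pi i}\int_{\mathcal C}\exp\bigl(\nu\zeta\sinh w-(\nu-\tfrac12)w\bigr)\,dw$ with $\nu$ as the single large parameter; the saddle is still at $w=\alpha$, and the extra amplitude factor $e^{w/2}$ (from the order being $\nu-\frac12$ rather than $\nu$) feeds directly into the Laplace corrections, producing the $V_m$ in one expansion at the cost of organizing the steepest-descent coefficients — the same computation in a different guise.
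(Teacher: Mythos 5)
Your proposal is correct and follows essentially the same route as the paper, which gives no independent proof of this lemma but obtains it from the classical Debye (large-order) expansion in \cite{Watson}: the same reduction you carry out, namely Watson's steepest-descent series with $U_k(\coth\alpha)\in\QQ[\coth\alpha]$ of degree $3k$, the order/argument shift $\nu\mapsto\nu-\frac12$ with argument $\nu\zeta$, and the normalization by $\bigl(\nu-\frac12\bigr)^{\nu-\frac12}/\Gamma\bigl(\nu+\frac12\bigr)$ handled via Stirling's series, yielding the degree count $3m-3$ after taking logarithms. Your spot checks are consistent with the stated values (e.g. $V_0=\tanh\alpha-\alpha-1$, the cancellation of the $-\frac12\tanh\alpha$ term in $V_1$ by the argument-shift correction, and the leading terms $-\frac{5}{24}t^3$ of $U_1$ and $\frac{5}{16}t^6$ of $U_2-\frac12 U_1^2$ matching $V_2$ and $V_3$), so the only remaining work is the mechanical bookkeeping you describe, including verifying that negative powers of $t=(1-\zeta^2)^{-1/2}$ arising from the shift corrections cancel so that $V_m$ is genuinely polynomial in $t$.
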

\noindent Note that Lemma \ref{lemmae0bessel} implies that as $\nu\rightarrow +\infty$, 
\beq
j_{\nu}\Bigl( \frac{\nu^2 \zeta^2}4 \Bigr)\;\sim\;  \bigl(2-\tfrac1\nu\bigr)^{\nu-\frac12} \,  e^U \,, \quad 
U \= \nu \, U_0 \+ U_1 \+ \frac{U_2}{\nu} + \frac{U_3}{\nu^2} \+ \dots \,, 
\eeq
where $\zeta\in(-1,1)$ is fixed, and 
\begin{align}
& U_0 \= -1 \+ \sqrt{1-\zeta^2} \, - \, \log \bigl(1+\sqrt{1-\zeta^2}\bigr) \,, \quad U_1 \= \frac12 \+ \frac12 \log\bigl(1+\sqrt{1-\zeta^2}\bigr)  -\frac14 \log(1-\zeta^2) \,, \nn\\
& U_{m} \= V_{m}  ~ (m\geq 2)\,. \nn
\end{align}
It is also easy to see that for $\ell\geq 1$, $U_{2\ell+1} + \frac1{\ell(2\ell+1) 2^{2\ell+2}}$ belongs to $\bigl(1-\zeta^2\bigr)^{-\frac{2\ell+1}2} \cdot \QQ\Bigl[\bigl(1-\zeta^2\bigr)^{-\frac12}\Bigr]$.
We omit further details of the proof of Lemma \ref{lemmae0}.

Due to Lemma \ref{lemmae0}, $a_m(\lambda,q)$, $c_m(\lambda,q)$ can be identified with their formal expansions in~$\sqrt{q}$. (Indeed, these 
series are convergent for $2|\sqrt{q}|<|\lambda|$).
Therefore, the large $\lambda$ asymptotic of $M(\frac{\lambda}\e;\frac{\sqrt{q}}\e)$ could be identified with the $\e \rightarrow 0$ (double scaling) asymptotic (identification between elements in $\QQ[[\lambda^{-1},\e,\sqrt{q}]]$). 
Theorem \ref{e0limit} then follows from Lemma \ref{lemmae0}. In particular, the identity 
$\sum_{g\geq 0} \e^{2g-2+k} H_{k}^{[g]}(\lambda_1,\dots,\lambda_k;q) = F_k(\lambda_1,\dots,\lambda_k; \e,q)$ is 
understood as an equality between formal power series in $\e,\sqrt{q}, \lambda_1^{-1},\dots,\lambda_k^{-1}$.

To show the statement for $k=1$, observe that 
$$
H_1^*(z,s) \= \frac{1+G(z;s)}{2} \, \frac{\p \log J_{z-\frac12}(2s)}{\p z} \+ \frac{1-G(z;s)}2 \, \frac{\p \log J_{\frac12+z}(2s)}{\p z}
$$
where we have used \eqref{GBB1}--\eqref{GBB2}. Then the theorem follows from Lemmata~\ref{lemmae0bessel}~and~\ref{lemmae0}.
\epf

We remark that, due to Lemma \ref{lemmae0bessel} the asymptotic formula \eqref{asymD} can also be viewed as an $\e\rightarrow 0$ limit, with 
$a=\frac{\lambda_1}\e$, $b=\frac{\lambda_2}\e$ and $s=\frac{\sqrt{q}}\e$.

\subsection{The $\e\rightarrow \infty$ asymptotic. Proof of Theorem~\ref{einftylimit}} First consider $k\geq 2$. 
For $|\e|>2|\lambda|$ we have
\begin{align}
& \frac12G\Bigl(\frac{\lambda}\e;\frac{\sqrt{q}}\e\Bigr) -\frac12 \= \sum_{k=0}^\infty \A_{2k}(\lambda,q) \e^{-2k}\,,\label{geinf1} \\
& \frac{\sqrt{q}}{\lambda+\frac\e2} \, \widetilde G\Bigl(\frac{\lambda}\e;\frac{\sqrt{q}}\e\Bigr) \= \sqrt{q} \, \sum_{m=0}^\infty \C_m(\lambda,q) \, \e^{-m}\,.\label{geinf2}
\end{align}
where $\A_{2k}$, $\C_m$ are polynomials in $q,\lambda$.
Note that the right hand sides are also the $\e\rightarrow \infty$ asymptotics of the left hand sides. 
The polynomials $\A_{2k},\C_m$ satisfy the following homogeneity conditions:
\begin{align}
& {\rm gr}\,  \A_{2k} \= 2k \, \A_{2k}\,,\quad  {\rm gr}\,  \C_m  \= (m-1) \, \C_m\,.\nn
\end{align}
The first several of these polynomials can be read off from 
\begin{align} 
& \frac12G\Bigl(\frac{\lambda}\e;\frac{\sqrt{q}}\e\Bigr) -\frac12 \= 
 -\frac{4 q}{\e^2} - \frac{16 q \bigl(\lambda^2 - \frac13 q\bigr)}{ \e^4}-\frac{64 q \bigl( \lambda^4 -\frac{10}{27} \lambda^2 q + \frac2{45} q^2 \bigr)}{ \e^6} \+ \dots \,, \nn\\
& \frac1{\lambda+\frac\e2} \, \widetilde G\Bigl(\frac{\lambda}\e;\frac{\sqrt{q}}\e\Bigr) \= \frac{2}{\e} \, - \, \frac{4\lambda}{\e^2}  \+ \frac{8(\lambda ^2- \frac23 q)}{ \e^3} 
\,- \, \frac{ 16( \lambda^3 - \frac29 \lambda q)}{\e^4}  \nn\\
& \qquad\qquad \qquad \qquad \qquad  \+ \frac{ 32(\lambda ^4-\frac{20}{27} \lambda ^2 q+\frac{2}{15} q^2)}{ \e^5} \,-\,
 \frac{64(\lambda^5 - \frac{20}{81}  \lambda ^3 q + \frac{2}{75} \lambda  q^2)}{\e^6} \+  \dots  \nn 
 \end{align}
where $|\e|>2|\lambda|$ is assumed.  Theorem \ref{einftylimit} follows from \eqref{geinf1}--\eqref{geinf2} and the definition of $H_k$. 
The $k=1$ statement easily follows from \eqref{H1sum} and \eqref{H1H1}.
\epf

\subsection{The $q\rightarrow 0$ asymptotic. Proof Theorem~\ref{q0limit}} By definition,
\begin{align}
& \frac12G\Bigl(\frac{\lambda}\e;\frac{\sqrt{q}}\e\Bigr) -\frac12 \= \frac12\sum_{i=0}^\infty \binom{2i}{i}  \frac{q^i}{ \prod_{\ell=0}^{2i-1}\bigl[\lambda+(\ell-i+1/2)\e\bigr]  } \,-\,\frac12 \,,\nn \\
&  \frac{\sqrt{q}}{\lambda+\frac\e2} \, \widetilde G\Bigl(\frac{\lambda}\e;\frac{\sqrt{q}}\e\Bigr) \= \sqrt{q} \, \sum_{i=0}^\infty \binom{2i}{i} \frac{q^i}{\prod_{\ell=0}^{2i}\bigl[\lambda+(\ell-i+1/2)\e\bigr]}\,. \nn
\end{align}
So the definition itself gives the $q\rightarrow 0$ asymptotic of the entries of $M(\frac{\lambda}\e;\frac{\sqrt{q}}\e)$; the coefficients are clearly rational functions of $\lambda$, $\e$. 
Theorem \ref{q0limit} then follows from the definition of $H_k$,~$k\geq 2$. For the case $k=1$, the definition of $H_1$ automatically gives the 
$q\rightarrow 0$ asymptotic, which simplified to \eqref{H1q0asym}.
 \epf
 
\begin{cor} $\forall\,k\geq 2$, the following formulas hold true
\begin{align}
& \sum_{d\geq 0} q^d \, H_{k,d}(\lambda_1,\dots,\lambda_k;\e) \= \sum_{g\geq 0} \e^{2g-2+2k} H_{k}^{[g]}(\lambda_1,\dots,\lambda_k;q)\,, \nn\\
&\sum_{d\geq 0} q^d \, H_{k,d}(\lambda_1,\dots,\lambda_k;\e) \= \sum_{g\geq 0} \e^{-2g} H_{k,[g]}(\lambda_1,\dots,\lambda_k;q) \,. \nn
\end{align}
\end{cor}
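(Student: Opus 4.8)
The plan is to regard all three expansions as expansions of one and the same analytic function
$$\Psi(\lambda_1,\dots,\lambda_k;\e,q) \:= H_k\Bigl(\frac{\lambda_1}\e,\dots,\frac{\lambda_k}\e;\frac{q^{1/2}}\e\Bigr),$$
so that the two displayed identities merely record the equality of the two iterated expansions of $\Psi$ with its full bivariate expansion in $q$ and $\e$. Three facts are used throughout. First, ${\rm gr}\,\Psi=0$, so every monomial that occurs carries total ${\rm gr}$-weight $0$; this is what makes the homogeneity degrees of $H_{k,d}$, $H_k^{[g]}$, $H_{k,[g]}$ in Theorems~\ref{q0limit},~\ref{e0limit},~\ref{einftylimit} match. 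Second, by Proposition~\ref{nopoleH} the function $\Psi$ is analytic along the diagonals $\lambda_i=\lambda_j$, and it is analytic away from the loci $\lambda_i\in\e\,\ZZ+\frac\e2$. Third, the $q^{d}$-Taylor coefficient of $\Psi$ is precisely the rational function $H_{k,d}(\lambda;\e)$: the entries of $W(\lambda,\e,q)=B\bigl(\tfrac\lambda\e;\tfrac{\sqrt q}\e\bigr)$ are, by their very definition, power series in $q$ with coefficients rational in $(\lambda,\e)$, and $\Psi$ is an algebraic combination of such entries with denominator $\prod_{i<j}(\lambda_i-\lambda_j)$.

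For the second identity I would argue by joint analyticity. On the polydisc $|\e|>2\,{\rm max}_i|\lambda_i|$ (so that $|\lambda_i/\e|<\tfrac12$, keeping us strictly away from the half-integer poles) and $|q|$ small, Theorem~\ref{einftylimit} asserts that the $\e\to\infty$ series converges, giving for each fixed $q$ a holomorphic function of $\e^{-1}$, while the $q$-series of the preceding paragraph gives, for each fixed $\e^{-1}$, a holomorphic function of $q$. By Hartogs's theorem $\Psi$ is jointly holomorphic in $(q,\e^{-1})$ there, hence has a convergent double Taylor expansion. The expansion of Theorem~\ref{q0limit} (in $q$ first) and that of Theorem~\ref{einftylimit} (in $\e^{-1}$ first) are its two iterated expansions and so agree term by term; this is exactly $\sum_{d\ge0}q^{d}H_{k,d}=\sum_{g\ge0}\e^{-2g}H_{k,[g]}$.

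The first identity is the delicate one, and here lies the main obstacle: the $\e\to0$ expansion of Theorem~\ref{e0limit} is only asymptotic (indeed divergent), so the polydisc argument is unavailable and the real work is to justify interchanging the convergent sum $\sum_d q^d$ with the $\e\to0$ asymptotic extraction. The key estimate is that $\Psi=\sum_d q^d H_{k,d}(\lambda;\e)$ converges uniformly as $\e\to0$: from
$$\tfrac12 G\Bigl(\tfrac\lambda\e;\tfrac{\sqrt q}\e\Bigr)-\tfrac12 \= \tfrac12\sum_{i\ge0}\binom{2i}i\frac{q^i}{\prod_{\ell=0}^{2i-1}\bigl[\lambda+(\ell-i+\tfrac12)\e\bigr]}-\tfrac12$$
one sees that the $q^{d}$-coefficient of each entry of $W$ equals $\binom{2d}d\lambda^{-2d}\bigl(1+{\rm O}(\e)\bigr)$, so the radius of convergence in $q$ stays bounded below by (essentially) $\lambda^2/4$ as $\e\to0$ — precisely the region $0<\tfrac{2\sqrt q}{\lambda_i}<1$ of Theorem~\ref{e0limit}. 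This uniformity licenses the termwise interchange: for each fixed $d$ the rational function $H_{k,d}(\lambda;\e)$ has only the finitely many poles $\lambda_i=m\e/2$ with $|m|<2d$, all bounded away from $\e=0$, hence is regular at $\e=0$, and writing $H_{k,d}(\lambda;\e)=\sum_p\e^p h_{d,p}(\lambda)$ the homogeneity ${\rm gr}\,H_{k,d}=-2d\,H_{k,d}$ forces $h_{d,p}$ to be homogeneous of degree $-2d-p$ in $\lambda$. Collecting powers of $\e$ and comparing with Theorem~\ref{e0limit} via uniqueness of asymptotic expansions identifies the coefficient of $\e^{2g-2+2k}$ with $H_k^{[g]}(\lambda;q)=\sum_d q^d h_{d,\,2g-2+2k}(\lambda)$, while the same comparison shows $\sum_d q^d h_{d,p}=0$ for every $p$ not of the form $2g-2+2k$. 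Re-summing then gives $\sum_d q^d H_{k,d}=\sum_g \e^{2g-2+2k}H_k^{[g]}$. Once the uniform-convergence estimate is in hand, the remainder is bookkeeping organized by ${\rm gr}$.
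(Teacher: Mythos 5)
Your overall architecture coincides with the paper's: the corollary is read off by viewing the expansions of Theorems~\ref{e0limit}, \ref{einftylimit} and~\ref{q0limit} as re-expansions of the single function $H_k\bigl(\frac{\lambda_1}\e,\dots,\frac{\lambda_k}\e;\frac{q^{1/2}}\e\bigr)$, carried out at the level of the entries of $B$; the paper does exactly this, identifying all series inside the common ring $\QQ[[\lambda^{-1},\e,\sqrt{q}]]$ (resp.\ with $\e^{-1}$) --- see the sentence following Lemma~\ref{lemmae0} and the displays \eqref{geinf1}--\eqref{geinf2} --- and then stating the corollary as an identity ``in the corresponding formal series rings''. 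Your proof of the second identity is correct (and Hartogs is not even needed: $G$ and $\widetilde G$ are entire in $s$, so $\Psi$ is manifestly jointly holomorphic in $(q,\e^{-1})$ on the polydisc $|\e|>2\max_i|\lambda_i|$, and the two iterated Taylor expansions agree term by term).

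The genuine gap is in the first identity, at the sentence ``this uniformity licenses the termwise interchange''. Uniform convergence of $\sum_d q^d H_{k,d}(\lambda;\e)$ for $\e$ near $0$ bounds the tail $\sum_{d>D}$ by a constant multiple of $(|q|\rho)^D$ \emph{uniformly in} $\e$, but not by $o(\e^N)$; a divergent all-orders asymptotic expansion cannot be passed through an infinite sum on the strength of uniform convergence alone, so at this point you are not entitled to invoke uniqueness of asymptotic expansions for the termwise-expanded series. Moreover the implicit Cauchy estimates you would need for the Taylor remainders of $H_{k,d}$ at $\e=0$, uniform in $d$, are delicate precisely because the poles $\lambda_i=m\e/2$, $|m|<2d$, accumulate at $\e=0$ as $d\to\infty$, so no fixed complex $\e$-disc of holomorphy is available. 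Two standard repairs exist. (i) Make the truncation $\e$-dependent, $D\sim c/|\e|$: the tail is then $O\bigl(e^{-c'/|\e|}\bigr)$, beyond all orders, while for $d\le D$ one proves remainder bounds $|R_{d,N}(\e)|\le C_N\,\rho^d\, d^N |\e|^N$ by Cauchy integration over circles of radius shrinking like $\lambda/d$; this completes your sketch but is real work. (ii) Avoid the interchange altogether: the Bessel asymptotics of Lemma~\ref{lemmae0bessel} are locally uniform in $\zeta=2\sqrt{q}/\lambda$, so the expansion of Theorem~\ref{e0limit} holds uniformly for $q$ on a small circle, and Cauchy's formula $H_{k,d}=\frac{1}{2\pi i}\oint \Psi\, q^{-d-1}\,dq$ transfers the asymptotic expansion directly to each rational function $H_{k,d}$; since $H_{k,d}$ is regular at $\e=0$, its asymptotic series is its Taylor series, and coefficient matching yields exactly your bookkeeping ($h_{d,p}$ equals the $q^d$-coefficient of $H_k^{[g]}$ for $p=2g-2+2k$, and vanishes otherwise). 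Route (ii) is the one closest in spirit to the paper, which phrases the entire comparison as an identification of formal series and therefore never needs to interchange a sum with an asymptotic limit.
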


Moreover, the following two identities hold true in the corresponding formal series rings:   
\begin{align}
& \sum_{d=1}^\infty q^d \,  \frac{(2d-1)!}{d!^2 \, \prod_{j=1}^d \bigl(\lambda^2-\frac{(2j-1)^2}{4}\e^2\bigr)} \= \sum_{g\geq 0} \e^{2g} H_{1}^{[g]}(\lambda;q) \,,\nn\\
& \sum_{d=1}^\infty q^d \,  \frac{(2d-1)!}{d!^2 \, \prod_{j=1}^d \bigl(\lambda^2-\frac{(2j-1)^2}{4}\e^2\bigr)} \= \sum_{g\geq 0} \e^{-2g} H_{1,[g]}(\lambda;q) \,. \nn
\end{align}
\noindent This corollary indicates that the $q\rightarrow 0$ limit connects the $\e\rightarrow 0$ and the $\e\rightarrow \infty$ limits.

\subsection{The $q\rightarrow \infty$ asymptotic. Proof of Theorem~\ref{qinflimit}}
Recall from \cite{Watson} that 
for any fixed value of $\nu$, as $|y|\rightarrow\infty$ in a sector $|{\rm arg} \, y|\leq\pi-\delta$, the following asymptotic holds true:
\begin{align}
J_\nu(y) \; \sim \; \sqrt{\frac2{\pi y}} \, & \biggl(\cos\bigl(y-\frac{\pi}2\nu-\frac\pi4\bigr)\sum_{m=0}^\infty \frac{(-1)^m\, (\nu-2m+\frac12)_{4m}}{(2m)! \, (2y)^{2m}} \nn\\
& \quad - \sin\bigl(y-\frac{\pi}2\nu-\frac\pi4\bigr)\sum_{m=0}^\infty \frac{(-1)^m\, (\nu-2m-\frac12)_{4m+2}}{(2m+1)! \, (2y)^{2m+1}}\biggr) \,. \label{yinf}
\end{align}

For $k\geq 2$, using \eqref{yinf} and \eqref{GBB1} we have 
\begin{lemma} For $z$ fixed and $|s|\rightarrow \infty$ with $\bigl|{\rm arg}\, s \bigr|\leq\pi-\delta$, we have the asymptotic expansions
\begin{align}
& G(z;s)  \, \sim   \, \frac{\cos(4s)}{\cos(\pi z)} \sum_{r=0}^\infty   \frac{ d_{2r}(z) }{s^{2r}}  
\+   \frac{\sin(4s)}{\cos(\pi z)} \sum_{r=0}^\infty \frac{ d_{2r+1}(z)}{s^{2r+1}} \, -\, \tan(\pi z)
\sum_{r=0}^\infty \frac{\binom{2r}r\prod_{j=-r}^r (z+j)}{2^{4r+1} \, s^{2r+1} } \,, \nn \\
& \frac{s}{z+\frac12} \widetilde G(z;s)  \, \sim  \,    \frac{\sin(4s)}{\cos(\pi z)} \sum_{r=0}^\infty   \frac{ e_{2r}(z) }{s^{2r}}  
\+   \frac{\cos(4s)}{\cos(\pi z)} \sum_{r=0}^\infty \frac{ e_{2r+1}(z)}{s^{2r+1}} \, -\, \tan(\pi z )
\sum_{r=0}^\infty \frac{\binom{2r}r\prod_{j=1-r}^r (z+j)}{2^{4r+1}  \, s^{2r} }  \,, \nn
\end{align}
with explicitly known polynomials $d_r(z)\in \QQ[z^2]$, $e_r(z)\in \QQ[z(z+1)]$.
\end{lemma}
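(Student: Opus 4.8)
The plan is to substitute the Bessel factorizations of Proposition~\ref{BBB}, namely \eqref{GBB1} for $G$ and \eqref{GBB3} for $\frac{s}{z+\frac12}\widetilde G$, together with Hankel's asymptotic expansion \eqref{yinf} evaluated at $y=2s$, and then to reorganize the result. Writing \eqref{yinf} compactly as $J_\nu(2s)\sim\frac1{\sqrt{\pi s}}\bigl(\cos(2s-\tfrac{\pi\nu}2-\tfrac\pi4)\,P(\nu;s)-\sin(2s-\tfrac{\pi\nu}2-\tfrac\pi4)\,Q(\nu;s)\bigr)$, where $P$ collects the even-index and $Q$ the odd-index terms, the first point I would record is that at each order in $s^{-1}$ both $P(\nu;s)$ and $Q(\nu;s)$ are \emph{even} polynomials in $\nu$, since the Pochhammer factors $(\nu-2m\pm\tfrac12)_{\ast}$ occurring there have zero-sets symmetric about $\nu=0$. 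Hence in \eqref{GBB1} the two orders $z-\tfrac12$ and $-z-\tfrac12$ produce only the four series $A_1=P(z-\tfrac12)$, $A_2=P(z+\tfrac12)$, $B_1=Q(z-\tfrac12)$, $B_2=Q(z+\tfrac12)$, while in \eqref{GBB3} the orders $\pm(z+\tfrac12)$ give equal pairs $\tilde A:=P(z+\tfrac12)$, $\tilde B:=Q(z+\tfrac12)$.

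Next I would multiply out the two Hankel series and, after accounting for the $\pm\frac\pi2$ phase shifts coming from the half-integer orders, convert every product of phase factors $\cos/\sin(2s\mp\tfrac{\pi z}2)$ into $\cos4s$, $\sin4s$, $\cos\pi z$, $\sin\pi z$ via the product-to-sum formulas. The prefactor $\frac{\pi s}{\cos\pi z}\cdot\frac1{\pi s}$ collapses to $\frac1{\cos\pi z}$, and the answer splits into exactly four pieces: a $\frac{\cos4s}{\cos\pi z}$-part, a $\frac{\sin4s}{\cos\pi z}$-part, a pure (non-oscillating and $\tan$-free) part, and a $\tan\pi z$-part. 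For $G$ these carry the coefficient series $A_1A_2-B_1B_2$, $-(A_1B_2+B_1A_2)$, $A_1A_2+B_1B_2-1$ (the $-1$ from $G=2\cdot\tfrac{1+G}2-1$), and $-(A_1B_2-B_1A_2)$; for $\frac{s}{z+\frac12}\widetilde G$ they carry $\tilde A\tilde B$, $\tfrac12(\tilde A^2-\tilde B^2)$, $0$, and $\tfrac12(\tilde A^2+\tilde B^2)$. Since the $A$'s are even and the $B$'s odd in $s^{-1}$, this already places $d_{2r},e_{2r}$ at even and $d_{2r+1},e_{2r+1}$ at odd powers, as asserted.

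The crux is the vanishing of the pure part. For $\widetilde G$ it is immediate, since the two orders are exact negatives and so the pure coefficient is $\tfrac12(\tilde B\,\tilde A-\tilde A\,\tilde B)=0$. For $G$ the claim is the nontrivial identity $A_1A_2+B_1B_2=1$, and this is exactly where I would invoke the exact identity obtained by adding \eqref{GBB1} and \eqref{GBB2},
\[
J_{z-\frac12}(2s)\,J_{-z-\frac12}(2s)+J_{z+\frac12}(2s)\,J_{-z+\frac12}(2s)=\frac{\cos\pi z}{\pi s}\,.
\]
Substituting Hankel into both products on the left and using the evenness of $P,Q$, the very same four series $A_1,A_2,B_1,B_2$ reappear; all the cross terms $A_iB_j$ cancel in the sum and the phase factors collapse, leaving $\frac1{\pi s}(A_1A_2+B_1B_2)\cos\pi z$. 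Comparing with the right side forces $A_1A_2+B_1B_2=1$. This step is the main obstacle; the rest is bookkeeping.

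Finally I would identify the surviving data. The coefficient series $A_1A_2-B_1B_2$, $A_1B_2+B_1A_2$ (resp.\ $\tfrac12(\tilde A^2-\tilde B^2)$, $\tilde A\tilde B$) define the polynomials $d_r$ (resp.\ $e_r$), and the $\tan\pi z$-coefficients $A_1B_2-B_1A_2$ and $\tfrac12(\tilde A^2+\tilde B^2)$ are matched against the closed forms $\sum_r\binom{2r}r\prod_{j=-r}^r(z+j)\big/(2^{4r+1}s^{2r+1})$ and $\sum_r\binom{2r}r\prod_{j=1-r}^r(z+j)\big/(2^{4r+1}s^{2r})$ by a direct evaluation of the leading Hankel coefficients and a Vandermonde-type Pochhammer simplification; one verifies the leading terms $\tfrac z{2s}$ and $\tfrac12$ by hand. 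The memberships $d_r\in\QQ[z^2]$ and $e_r\in\QQ[z(z+1)]$ then follow from the symmetries of the two ${}_1F_2$'s: $G$ is invariant under $z\mapsto-z$ and $\frac{s}{z+\frac12}\widetilde G$ changes sign under $z\mapsto-z-1$ (these come from the symmetry of the lower parameters in \eqref{Hyper1}, \eqref{Hyper2}). Since $\cos\pi z$ is even while $\tan\pi z$ is odd under each involution, every coefficient series must be invariant, hence a polynomial in $z^2$ respectively $z(z+1)$.
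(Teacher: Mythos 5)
Your proposal is correct and follows essentially the same route as the paper, whose proof consists precisely of inserting the Hankel expansion \eqref{yinf} into the Bessel factorizations \eqref{GBB1}--\eqref{GBB3} and regrouping by the phases $\cos 4s$, $\sin 4s$, $\tan(\pi z)$. You in fact supply more detail than the paper does — notably the cancellation of the non-oscillatory part of $G$ via the exact identity obtained by adding \eqref{GBB1} and \eqref{GBB2}, and the parity arguments giving $d_r\in\QQ[z^2]$ and $e_r\in\QQ[z(z+1)]$ — and these steps all check out.
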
 
The rest of Theorem \ref{qinflimit} follows from the definition of $H_k$ as well as elementary trigonometric identities.
In a similar way, one proves statement for $k=1$. 
\epf

\begin{remark}
We would like to mention the following formal solution $W$ to equations \eqref{topoB}--\eqref{trdetB}:
\beq
W \= \begin{pmatrix}  \frac12 - \sqrt{-1} \, w_1 (\lambda,\e,q)  &  \sqrt{-1} \, w_2 (\lambda-\e,\e,q)  
\\   - \sqrt{-1} \, w_2 (\lambda,\e,q) & \frac12   + \sqrt{-1} \, w_1 (\lambda,\e,q)     \end{pmatrix} 
\eeq
where 
$ w_1 (\lambda,\e,q) = \sum_{m=0}^\infty  \frac{(2m-1)!! \, \prod_{j=-m}^m (\lambda+\e j)}{2^{3m+2} \,m!\, q^{m + 1/2}}$, $~ w_2 (\lambda,\e,q) = \sum_{m=0}^\infty  \frac{(2m-1)!! \, \prod_{j=-(m-1)}^{m} (\lambda +  \e j)}{2^{3m+1} \, m!\, q^m}$.
Clearly, $W$ belongs to $\QQ[\lambda,\e][[q^{-1/2}]]$. However, analytic aspect of this formal solution is unclear to us. For example, 
we do not know if there exists an analytic solution satisfying \eqref{asympB} and with the large $q$ asymptotic given by $W$.
We will consider this problem in a subsequent publication.  
\end{remark}

\section{Further remarks} \label{section5}
\subsection{Bispectrality}
Bispectrality is an interesting and rare phenomenon in the theory of integrable systems. 
Let $\bigl(u_n({\bt}),v_n({\bt})\bigr)$ be a solution to the Toda Lattice Hierarchy, and $R_n=R_n(\lambda;\bt)$ its matrix resolvent. 
Denote $\R(\lambda;x,\bt;\e)=R_{x/\e}(\lambda;\bt)$, also called the matrix resolvent. 
We say that the matrix resolvent has {\it bispectrality} if there exists a non-zero scalar function $g(\lambda;\e)$ and an invertible matrix-valued function $A(\lambda;\e)$  such that 
$g(\lambda;\e)\,A(\lambda;\e) \, \R(\lambda;x,{\bf 0};\e)\,A(\lambda;\e)^{-1}$ is a function of $h(\lambda,x)$ and $\e$ only for some scalar function $h$.
This type of bispectrality can be defined analogously to 
other integrable system (where in most cases $\e$ can be taken to be 1 for simplicity).  
For the Toda Lattice Hierarchy, one might guess that the GUE \cite{DY1} and the $\mathbb{P}^1$ cases are essentially (modulo some group actions)
two only possible cases  
possessing bispectrality of the above type, but 
there are not enough evidences for supporting this guess. So classifying this type of bispectrality for the Toda Lattice Hierarchy seems still to be 
 an open question.
Also, bispectrality looks still mysterious. Indeed,
we do not know its origin. 
We call the solution has the {\it type-I bispectrality} if the function $h(\lambda,x)=\lambda-x$.
Conjecturally, the so-called ``topological" solution to an integrable system always has the type-I bispectrality. 
We hope to study criterion of bispectrality beyond type-I in a future publication (the method given in \cite{DG} might be helpful). 

\noindent {\bf Conjecture}.
{\it Let $M$ be a semisimple (calibrated) Frobenius manifold. 
Assume that the integrable hierarchy of topological type of $M$ \cite{Du1, DZ-norm, Gi} admits a Lax pair formalism. 
Then, a solution of this integrable hierarchy is topological \emph{iff} its matrix resolvent possesses bispectrality of Type I. The 
same statement is valid for the Hodge hierarchy \cite{DLYZ} of $M$.}

Note that validity of the Main Conjecture for GW invariants of $\mathbb{P}^1$ is confirmed in this paper.
\begin{prop}
The Conjecture is true for ADE singularities.
\end{prop}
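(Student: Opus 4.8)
The plan is to establish the two implications of the iff separately, working type by type through the Drinfeld--Sokolov realization of the integrable hierarchy of topological type; the hypothesis that a Lax-pair formalism exists will be verified along the way. For a simply-laced simple Lie algebra $\g$ the miniversal deformation of the corresponding simple singularity carries, via Saito's theory of primitive forms, the semisimple calibrated Frobenius structure on the orbit space $\CC^{n}/W$, and its hierarchy of topological type is the Drinfeld--Sokolov hierarchy of $\g$. This hierarchy admits a Lax-pair formalism through the principal Lax operator $\mathcal L=\e\,\p_x+\ld+Q(x,\bt)$, where $\ld$ is the cyclic element of the principal Heisenberg subalgebra and $Q$ takes values in a fixed Borel subalgebra. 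The matrix resolvent $\R(\lambda;x,\bt;\e)$ is the $\g$-valued loop-algebra element determined by $[\mathcal L,\R]=0$ together with the Casimir (trace/determinant) normalization fixing its adjoint orbit and the prescribed leading term as $\lambda\to\infty$ in the principal gradation, in exact parallel with the Toda case treated above.

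\emph{Topological $\Rightarrow$ type-I bispectrality.} For the topological solution, whose tau-function is the total descendant potential of $M$, I would invoke the topological-ODE results of \cite{BDY2,BDY3}: at $\bt=\mathbf 0$ the resolvent satisfies a linear differential equation in $\lambda$ whose coefficients depend polynomially on $\lambda$, and conjugating by the $\lambda$-dependent factor $A(\lambda;\e)$ built from the principal gradation turns this into a translation-invariant equation, so that $g(\lambda;\e)\,A(\lambda;\e)\,\R(\lambda;x,\mathbf 0;\e)\,A(\lambda;\e)^{-1}$ depends on $\lambda-x$ and $\e$ only. This is precisely the ADE analogue of Theorem~\ref{pbispec}, with the topological ODE now playing the role that the topological difference equation~\eqref{m1xp} plays in the $\mathbb P^1$ case, and it is exactly the assertion of type-I bispectrality.

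\emph{Type-I bispectrality $\Rightarrow$ topological.} Conversely, suppose a solution of the hierarchy has a matrix resolvent with type-I bispectrality, so that $g\,A\,\R\,A^{-1}=M(\lambda-x;\e)$ for some $\g$-valued $M$, of the form~\eqref{RDon}. Substituting this ansatz into the stationary equation $[\mathcal L,\R]=0$ and the normalization conditions converts them into a closed system for $M$ invariant under simultaneous translation of $\lambda$ and $x$, the continuous (differential) counterpart of equations~\eqref{m1xp}--\eqref{m1xp1}. I would then show, following the argument of Proposition~\ref{ptopod}, that the leading-term normalization makes this system a triangular recursion with a \emph{unique} formal solution. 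Since the topological solution already supplies such an $M$ by the first implication, uniqueness forces any bispectral solution to coincide with the topological one. The statement for the Hodge hierarchy then follows, because the Hodge hierarchy of an ADE Frobenius manifold is obtained from the principal one by an explicit calibration/Miura transformation that conjugates $\R$ by a fixed factor depending only on $\lambda$ and $\e$, and such a conjugation manifestly preserves type-I bispectrality.

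The main obstacle is this converse implication, and within it the uniqueness step: one must verify that the Casimir and $\lambda\to\infty$ normalizations genuinely reduce the loop-algebra equations for $M$ to a triangular recursion with no residual freedom. In type $A_{n-1}$ this is transparent through the scalar Gelfand--Dickey / $n$-KdV reduction, where the recursion is the classical Gelfand--Dickey one; for the $D$ and $E$ cases one must work with the full principal-Heisenberg grading, and the delicate point is to confirm that the graded components of $M$ are pinned down one at a time, so that the only bispectral resolvent is the topological one.
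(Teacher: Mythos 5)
Your two-implication skeleton is exactly the paper's proof, and your necessity argument matches it precisely: the statement that for the topological solution the suitably conjugated resolvent satisfies a translation-invariant equation, hence depends only on $\lambda-x$ and $\e$, is word-for-word Lemma~4.2.3 of \cite{BDY3} (the ``Key Lemma''), which the paper simply cites. Where you diverge is the converse, and there you have made the problem harder than it is: the step you flag as ``the main obstacle'' --- pinning down the formal solution of the translation-invariant system with no residual freedom, type by type through the principal-Heisenberg grading --- is already a theorem in the literature, namely the uniqueness theorem for topological ODEs of \cite{BDY2}, which the paper invokes: the space of solutions regular at infinity has dimension equal to the rank of the simple Lie algebra, with one basis solution per exponent. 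Note that the correct statement is dimension $=$ rank, not literal uniqueness, so your phrase ``unique formal solution'' is accurate only after fixing the leading Casimir normalization: a bispectral resolvent is a priori a linear combination of the $n$ basic regular-at-infinity solutions, and it is the prescribed leading term as $\lambda\to\infty$ that identifies it with the resolvent of the topological solution at $\bt=\mathbf 0$; uniqueness of the solution of the hierarchy with given initial data then forces the solution to be topological. With this citation in hand, no Gelfand--Dickey versus $D$/$E$ case analysis is needed, and your proposal collapses onto the paper's two-line argument.

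One caveat: your treatment of the Hodge-hierarchy half of the Conjecture --- the claim that the Hodge resolvent is obtained from the principal one by conjugation by a fixed factor depending only on $\lambda$ and $\e$ --- is asserted rather than proved, and is not obviously true, since the Hodge hierarchy of \cite{DLYZ} arises from an $\e$-dependent Miura-type deformation rather than a fixed gauge transformation of the Lax operator. The paper's own proof is silent on the Hodge statement, so you are not behind it here, but this portion of your argument should not be regarded as established.
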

\pf
The necessity part is precisely the Lemma 4.2.3 of \cite{BDY3} where it is called the Key Lemma. The sufficiency part 
follows from the uniqueness theorem of topological ODEs \cite{DY1}, i.e. the space of solutions regular at infinity is equal to the rank of the simple 
Lie algebra. 
\epf

\subsection{Dual topological ODE}
The topological difference equation for $\mathbb{P}^1$ can be written as
\beq
M(z-1;s)  \, A  \,- \, A \,   M(z;s) \= z \, M(z-1;s)  \, B  \,-\,  z \, B \,   M(z;s) 
\eeq
with 
\beq 
A \= \begin{pmatrix} \frac1 2 & s\\ -s & 0\end{pmatrix} \,, \qquad B \= \begin{pmatrix} 1 & 0\\ 0 & 0\end{pmatrix} \,. 
\eeq
\begin{defi}
The dual topological ODE for the Toda Lattice Hierarchy associated with the solution corresponding to the GW invariants of $\mathbb{P}^1$ (within the stationary sector) is defined by
\beq\label{dual}
e^{y} \, \widetilde M  \, A  \,- \, A \,   \widetilde M \=  
e^{y} \biggl( \widetilde M + \frac{d \widetilde M}{d y}  \biggr) \, B   \, -  \, B \,   \frac{d \widetilde M}{d y} 
\eeq
where $\widetilde M=\widetilde M(y;s)$ is a matrix-valued function in $y$, and $s$ is an arbitrary parameter. 
\end{defi}
Topological and dual topological equations \eqref{m1xp},  \eqref{dual} are related 
via a Laplace type transform, i.e.
$$
\widetilde M (y;s) \= \frac1{2\pi i} \int_\gamma e^{z \, y} \, M(z;s) \, d z
$$
where $\gamma$ is an appropriate contour on the complex $z$ plane.

\subsection{Analytic invariants of~$\mathbb{P}^1$} We have already seen that the formal series $\e \, F_k(\lambda_1,\dots,\lambda_k; 0, \e,q)$, defined as the generating series  
of the GW invariants $\langle \tau_{i_1}(\phi_{\alpha_1}) \cdots \tau_{i_k}(\phi_{\alpha_k})  \rangle_{g,d}$ of~$\mathbb{P}^1$ 
(in {\it full genera} and of {\it all degrees})
is not convergent as a series of $\e$, or as a (multi-)series of $\lambda_1^{-1},\dots,\lambda_k^{-1}$. However, as a power 
series of $q$, it does converge, which gives the motivation of defining the analytic $k$-point functions 
$H_k\bigl(\frac{\lambda_1}\e, \dots, \frac{\lambda_k}\e; \frac{q^{1/2}}\e\bigr)$, such that the GW invariants are 
the coefficients in the full asymptotic of the double scaling limit $\e\rightarrow 0$ (or of the $q\rightarrow0$ limit) of 
$H_k\bigl(\frac{\lambda_1}\e, \dots, \frac{\lambda_k}\e; \frac{q^{1/2}}\e\bigr)$. The definition of~$H_k$ is certainly natural, and provides the 
non-perturbative version of topological quantum field theory for~$\mathbb{P}^1$.  
We refer to the coefficients in the $\e\rightarrow \infty$ asymptotics (or again in the $q\rightarrow 0$ asymptotic but with 
$|\e|\gg|\lambda_i|$, $i=1,\dots,k$), and in the $q\rightarrow\infty$ asymptotics of 
$H_k\bigl(\frac{\lambda_1}\e, \dots, \frac{\lambda_k}\e; \frac{q^{1/2}}\e\bigr)$ as {\it analytic invariants} of~$\mathbb{P}^1$. 
These invariants are counterparts of the GW invariants. 
For example, the first few $H_{k,[g]}$ are
\begin{align}
& H_{1,[0]} \= 0\,, \quad H_{1,[1]} \= -4q\,,  \quad H_{1,[2]} \= -16 q \lambda^2 + \frac83 q^2\,, 
\quad H_{1,[3]} \= -64 q \lambda^4 + \frac{320}{27} q^2 \lambda^2 - \frac{128}{135} q^3\,, \nn\\
& H_{2,[1]} \= 16q \,, \quad H_{2,[2]} \= 64 q \bigl(\lambda_1^2+\lambda_2^2\bigr) \,-\, \frac{256}9 q^2 \,, \nn\\
& H_{2,[3]} \=  256 q \bigl(\lambda_1^4+ \lambda_1^2 \lambda_2^2 + \lambda_2^4 \bigr) 
 \,-\, \frac{256}{81} q^2 \bigl(37\lambda_1^2-2\lambda_1\lambda_2+37 \lambda_2^2 \bigr)  \+ \frac{53248}{2025} q^3 \,,  \nn \\
& H_{3,[1]} \= -64 q \,, \quad H_{3,[2]} \= -256 q \bigl(\lambda_1^2+\lambda_2^2+\lambda_3^2\bigr) \+ \frac{6656}{27} q^2 \,, \nn\\
& H_{3,[3]} \= -1024 q\, \bigl( \lambda_1^4+\lambda_2^4+ \lambda_3^4 +\lambda_1^2\lambda_2^2+\lambda_2^2\lambda_3^2+\lambda_3^2\lambda_1^2\bigr) \,, \nn\\
&\qquad\qquad \+ \frac{4096}{243} q^2 \Bigl(59(\lambda_1^2+\lambda_2^2+\lambda_3^2) -(\lambda_1\lambda_2+\lambda_2\lambda_3+\lambda_3\lambda_1)\Bigr) \,- \, \frac{13027328}{30375} q^3\,,\nn
\end{align}
where we recall that $H_{k,[g]}$ are defined in the expansion \eqref{asympHeinf}.
These are the counterparts for $\e\rightarrow \infty$.

We also list the first several $H_k^{d,m}$ and $\widetilde H_k^{d,m}$ for $k\geq 2$:
\begin{align}
& H_2^{0,0} \= 
\= \e^2 \, \frac{\frac12 - C_1 \, C_2 - S_1 \, S_2}{(\lambda_1-\lambda_2)^2} \, ,  \quad H_2^{1,0} \= 0\,, \quad  H_2^{1,1} \= - \e^2 \, \frac{S_1 - S_2}{4 \, (\lambda_1-\lambda_2)} \,,  
\quad \widetilde H_2^{1,1} \= 0 \,, \nn\\
& H_2^{2,0}  \=  \frac{1}{32} \Bigl[\e^2 - 2 \, \e^2 \, S_1 \, S_2 -2 \, (\lambda_1+\lambda_2)^2\Bigr] \,, \quad H_2^{2,1} \= 0\,,  \quad \widetilde H_2^{2,1} =  \e \, \frac{\bigl(\e^2-2 \lambda_1^2\bigr) \, S_2 - \bigl(\e^2-2 \lambda_2^2\bigr) \, S_1}
{16 \, (\lambda_1-\lambda_2)}\,, \nn\\
& H_2^{2,2} \= \frac{\e^2}{32} \,,  \quad H_2^{3,0} \=0 \,, \quad H_2^{3,1} \= \frac{\bigl(\e^4- \e^2 \bigl(\lambda_1^2+2 \lambda_2^2\bigr)+\lambda_2^4\bigr) S_1 - 
\bigl(\e^4-\e^2 \bigl(2 \lambda_1^2+\lambda_2^2\bigr)+\lambda_1^4\bigr) S_2}{32 \, (\lambda_1-\lambda_2)}\,, \nn\\
&  \widetilde H_2^{3,1} \=0\,,\quad  H_2^{3,2} \=0 \,, \quad \widetilde H_2^{3,2} \= \e\,\frac{\e^2-(\lambda_1^2+\lambda_2^2)}{64}   \,,   \nn \\
& H_3^{0,0} \=0 \,, \quad H_3^{1,0} \= - \e^2 \, \frac{ (\lambda_2^2-\lambda_3^2) \, S_1 + (\lambda_3^2-\lambda_1^2) \, S_2 
+(\lambda_1^2-\lambda_2^2) \, S_3}{4 \, (\lambda_1-\lambda_2)  (\lambda_2-\lambda_3) (\lambda_3-\lambda_1)} \,, \nn \\
& H_3^{1,1} \= 0\,, \quad  \widetilde H_3^{1,1} \= \e^3 \, \frac{ (\lambda_1-\lambda_2)  \, S_1 \, S_2
+ (\lambda_2-\lambda_3) \, S_2\, S_3 +(\lambda_3-\lambda_1) \, S_3\, S_1}
{4 \, (\lambda_1-\lambda_2) (\lambda_2-\lambda_3) (\lambda_3-\lambda_1)} \,. \nn
\end{align}
Here, $C_1=\cos \bigl(\frac{\pi  \lambda_1}{\e}\bigr)$, $C_2=\cos \bigl(\frac{\pi  \lambda_2}{\e}\bigr)$, $S_1=\sin \bigl(\frac{\pi  \lambda_1}{\e}\bigr)$, $S_2=\sin \bigl(\frac{\pi  \lambda_2}{\e}\bigr)$, $S_3=\sin \bigl(\frac{\pi  \lambda_3}{\e}\bigr)$.
For $k=1$, we have
\begin{align}
& H_1^{*,1}=0 \,, \quad H_1^{*,2} = \frac{2\lambda^2-\e^2}{16} \,, \quad H_1^{*,3}=0 \,, 
\quad H_1^{*,4} = -\frac{2\lambda^6-16 \e^2 \lambda^4+32 \e^4\lambda^2-9\e^6}{384\e^2} \,, \nn\\
& \widetilde H_1^{*,1} = \frac\e4 \,, \quad \widetilde H_1^{*,2} \= 0\, , \quad  
\widetilde H_1^{*,3} = - \frac{\lambda^4-3\e^2\lambda^2+\e^4}{32 \e} \,, \quad \widetilde H_1^{*,4} = 0 \,. \nn
\end{align}
Here, $H_k^{d,m}$ and $H_1^{*\,d}$ are defined in \eqref{asympHq} and \eqref{asympHq}, respectively, which give the counterparts for $q\rightarrow \infty$.
 It will be interesting to study the Stokes phenomenon
of the GW invariants by investigating the asymptotic of $H_k\bigl(\frac{\lambda_1}\e, \dots, \frac{\lambda_k}\e; \frac{q^{1/2}}\e\bigr)$ 
as $\e$ goes to 0 within different sectors.

\medskip
\medskip
\medskip
\medskip

\noindent Boris Dubrovin

\noindent SISSA, via Bonomea 265, Trieste 34136, Italy

\noindent dubrovin@sissa.it

\medskip
\medskip

\noindent Di Yang

\noindent Max-Planck-Institut f\"ur Mathematik, Vivatsgasse 7, Bonn 53111, Germany

\noindent diyang@mpim-bonn.mpg.de

\medskip
\medskip

\noindent Don Zagier

\noindent Max-Planck-Institut f\"ur Mathematik, Vivatsgasse 7, Bonn 53111, Germany

\noindent dbz@mpim-bonn.mpg.de


\begin{thebibliography}{99}
\bibitem{B}
Behrend, K. (1997). Gromov--Witten invariants in algebraic geometry. Inventiones Mathematicae, {\bf 127} (3), 601--617.

\bibitem{BF}
Behrend, K., Fantechi, B. (1997). The intrinsic normal cone. Inventiones Mathematicae, {\bf 128} (1), 45--88.

\bibitem{BDY1}
Bertola, M., Dubrovin, B., Yang, D. (2016). 
Correlation functions of the KdV hierarchy and applications to intersection numbers over $\overline{\mathcal M}_{g, n}$. 
Physica D: Nonlinear Phenomena, {\bf 327}, 30--57.

\bibitem{BDY2}
Bertola, M., Dubrovin, B., Yang, D. (2016). Simple Lie algebras and topological ODEs. IMRN (2016) rnw285.

\bibitem{BDY3}
Bertola, M., Dubrovin, B., Yang, D. (2016). Simple Lie algebras, Drinfeld--Sokolov hierarchies, and multi-point correlation functions. 
Preprint arXiv: 1610.07534.

\bibitem{BH}
Br\'ezin, E., Hikami, S. (2017). {\it Random matrix theory with an external source} (Vol. {\bf 19}). Springer.

\bibitem{CDZ}
Carlet, G., Dubrovin, B., Zhang, Y. (2004). The extended Toda hierarchy. Mosc. Math. J, {\bf 4} (2), 313--332.

\bibitem{Du1}
Dubrovin, B. (1996). Geometry of 2D topological field theories. 
In ``Integrable Systems and Quantum Groups" (Montecatini Terme, 1993). 
Editors: Francaviglia, M., Greco, S. Springer Lecture Notes in Math. \textbf{1620}, 120--348.

\bibitem{DLYZ}
Dubrovin, B., Liu, S.-Q., Yang, D., Zhang, Y. (2016). Hodge integrals and tau-symmetric integrable hierarchies of 
Hamiltonian evolutionary PDEs. Advances in Mathematics, {\bf 293}, 382--435.

\bibitem{DY1}
Dubrovin, B., Yang, D. (2017). Generating series for GUE correlators. Letters in Mathematical Physics, {\bf 107} (11), 1971--2012.

\bibitem{DY2}
Dubrovin, B., Yang, D. (2017). On Gromov--Witten invariants of $\mathbb{P}^1$. Preprint arXiv: 1702.01669.

\bibitem{DYZ1}
Dubrovin, B., Yang, D., Zagier, D. (2017). Classical Hurwitz numbers and related combinatorics. Moscow Mathematical Journal, {\bf 17} (4), 601--633.

\bibitem{DYZ2}
Dubrovin, B., Yang, D., Zagier, D. On tau-functions of the KdV hierarchy. {\it to appear}.

\bibitem{DZ-norm} 
Dubrovin, B., Zhang, Y. (2001). Normal forms of hierarchies of integrable PDEs, Frobenius manifolds and Gromov--Witten invariants. 
Preprint arXiv: math.DG/0108160. 

\bibitem{DZ-toda}
Dubrovin, B., Zhang, Y. (2004). Virasoro symmetries of the extended Toda hierarchy. Comm. Math. Phys., \textbf{250} (1), 161--193.

\bibitem{DG} 
Duistermaat, J.J., Gr\"unbaum, F.A. (1986). Differential equations in the spectral parameter. Comm. Math. Phys. \textbf{69}, 177--240.  

\bibitem{DMNPS} 
Dunin-Barkowski, P., Mulase, M., Norbury, P., Popolitov, A., Shadrin, S. (2014). Quantum spectral curve for the 
Gromov--Witten theory of the complex projective line. 
Journal f\"ur die reine und angewandte Mathematik (Crelles Journal). doi 10.1515/crelle-2014-0097.

\bibitem{EHY}
Eguchi, T., Hori, K., Yang, S.-K. (1995). Topological $\sigma$-Models and Large-N Matrix Integral. 
International Journal of Modern Physics A, {\bf 10}, 4203--4224.

\bibitem{EY}
Eguchi, T., Yang, S.-K. (1994). The topological $CP^1$ model and the large-$N$ matrix integral. 
Modern Physics Letters A, {\bf 9} (31), 2893--2902.

\bibitem{Ge}
Getzler, E. (2001). The Toda conjecture. In: Symplectic Geometry and Mirror Symmetry (KIAS, Seoul, 2000). 
Singapore: World Scientific, pp. 51--79.

\bibitem{GOP}
Getzler, E., Okounkov, A., Pandharipande, R. (2002). 
Multipoint series of Gromov--Witten invariants of $CP^1$. Letters in Mathematical Physics, {\bf 62} (2), 159--170.

\bibitem{Gi}
Givental, A. B. (2001). Gromov--Witten invariants and quantization of quadratic Hamiltonians. 
Moscow Mathematical Journal, {\bf 1} (4), 551--568.

\bibitem{Hurwitz}
Hurwitz, A. (1891). Ueber Riemann'sche Fl\"achen mit gegebenen Verzweigungspunkten. 
Mathematische Annalen, {\bf 39} (1), 1--60.

\bibitem{KM}
Kontsevich M., Manin, Yu. (1994). Gromov--Witten classes, quantum cohomology, and enumerative geometry. 
Comm. Math. Phys. {\bf 164}, 525--562.

\bibitem{LT}
Li, J., Tian, G. (1998). Virtual moduli cycles and Gromov-Witten invariants of algebraic varieties. 
Journal of the American Mathematical Society, {\bf 11} (1), 119--174.


\bibitem{Manin}
Manin, Yu. (1996). {\it Frobenius manifolds, quantum cohomology, and moduli spaces} (Vol. {\bf 47}). AMS.

\bibitem{M}
Marchal, O. (2017). WKB solutions of difference equations and reconstruction by the topological recursion. Nonlinearity, {\bf 31} (1), 226--262.

\bibitem{NS}
Norbury, P., Scott, N. (2014). Gromov--Witten invariants of $\mathbb{P}^1$ and Eynard--Orantin invariants. 
Geometry \& Topology, {\bf 18} (4), 1865--1910.

\bibitem{OP}
Okounkov, A., Pandharipande, R. (2006). Gromov--Witten theory, Hurwitz theory, and completed cycles. 
Annals of Mathematics, {\bf 163} (2), 517--560.

\bibitem{OPeq}
Okounkov, A., Pandharipande, R. (2006). The Equivariant Gromov--Witten Theory of ${\bf P^1}$. Annals of Mathematics, {\bf 163} (2), 561--605.

\bibitem{OP2}
Okounkov, A., Pandharipande, R. (2009). Gromov--Witten theory, Hurwitz numbers, and matrix models. 
In Proceedings of Symposia Pure Mathematics (Vol. {\bf 80}, pp. 325--414). 
Editors: D.~Abramovich et. al. AMS.

\bibitem{P}
Pandharipande, R. (2000). The Toda equations and the Gromov--Witten theory of the Riemann sphere. 
Letters in Mathematical Physics, {\bf 53} (1), 59--74.

\bibitem{Watson}
Watson, G. N. (1944). {\it A treatise on the theory of Bessel functions}. 2nd edition. Cambridge University Press. 

\bibitem{Witten} 
Witten, E. (1991). Two-dimensional gravity and intersection theory on moduli space.
Surveys in Differential Geometry (Cambridge, MA, 1990), (pp. 243--310), Lehigh Univ., Bethlehem, PA.

\bibitem{Zhang} 
Zhang, Y. (2002). On the $CP^1$ topological sigma model and the Toda lattice hierarchy. 
Journal of Geometry and Physics, {\bf 40} (3), 215--232.

\bibitem{Zhou1} Zhou, J. (2015). Emergent geometry and mirror symmetry of a point. Preprint arXiv: 1507.01679.

\end{thebibliography}
\end{document}